\documentclass[10pt]{amsart}

\usepackage{cite,amsthm,amsfonts,amsmath,amssymb,mathrsfs,enumitem,eucal}
\usepackage{accents}
\usepackage[usenames]{color}
\usepackage[colorlinks=true, linkcolor=webgreen, citecolor=webgreen, urlcolor=webbrown]{hyperref}
\usepackage[initials]{amsrefs}
\usepackage{diagrams}
%\usepackage{showkeys}
%\usepackage{todonotes,comment}

%%%%%%%%%%%%%%%%%%%%%%%%%%%%%%%%%%%%%%%%%%%%%%%%%%%%%%%%%%%%%%%%%%%%%%%%%%%%%%
\textwidth=6.5in
\textheight=8.5in
\oddsidemargin=0in
\evensidemargin=0in
\parskip=4pt
\parindent=20pt
\addtolength{\headheight}{5pt}

\definecolor{webgreen}{rgb}{0,.4,0}
\definecolor{webbrown}{rgb}{.4,0,0}

\newtheorem{Thm}{Theorem}[section]
\newtheorem{Def}[Thm]{Definition}
\newtheorem{Lm}[Thm]{Lemma}
\newtheorem{Prop}[Thm]{Proposition}
\newtheorem{Cor}[Thm]{Corollary}

\theoremstyle{definition}
\newtheorem*{ack}{Acknowledgements}

\theoremstyle{remark}

\newtheorem{Rem}[Thm]{Remark}

\numberwithin{equation}{section}

%%%%%%%%%%%%%%%%%%%%%%%%%%%%%%%%%%%%%%%%%%%%%%%%%%%%%%%%%%%%%%%%%%%%%%%%%%%%%

\def\<{\langle}
\def\>{\rangle}

\DeclareMathOperator{\GL}{GL}
\DeclareMathOperator{\wtaff}{wt}
\DeclareMathOperator{\wt}{\ring\wtaff}
\DeclareMathOperator{\wtbar}{\overline\wtaff}
\DeclareMathOperator{\ind}{ind}
\DeclareMathOperator{\res}{res}
\DeclareMathOperator{\Ann}{Ann}

\DeclareMathOperator{\head}{head}
\DeclareMathOperator{\rad}{rad}
\DeclareMathOperator{\soc}{soc}
\DeclareMathOperator{\Hom}{Hom}
\DeclareMathOperator{\gr}{gr}

\def\Re{\mathbb R}
\def\Rat{\mathbb Q}
\def\F{\mathbb F}
\def\C{\mathbb C}
\def\Z{\mathbb Z}

\def\a{\alpha}

\def\d{\delta}
\def\th{\theta}

\def\l{\lambda}
\def\L{\Lambda}

\def\R{\mathcal R} % group ring of the finite weight lattice
\def\O{\mathcal O} % minuscule weights
\def\A{\mathbf A} % enveloping algebra of the imaginary Borel
\def\m{\mathbf m} % maximal ideal
\def\Wloc{W_{\rm loc}} % local Weyl module
 % global Weyl functor
 % restriction functor
%\def\HH{\tilde \mathcal H}

\def\Gaff{\mathfrak g} % affine Lie algebra
\def\G{\ring{\mathfrak g}} % finite Lie algebra
\def\Haff{\mathfrak h} % affine Cartan
\def\H{\ring {\mathfrak h}} % finite Cartan
\def\Baff{\mathfrak b} % affine Borel
\def\B{\ring{\mathfrak b}} % finite Borel
\def\Naff{\mathfrak n} % affine positive nilpotent
\def\N{\ring{\mathfrak n}} % finite positive nilpotent
\def\K{\mathfrak k} % (hyper)special maximal parabolic
\def\CG{\mathfrak{Cg}} % current algebra
\def\M{\mathfrak F} % category of modules
 % un-standard degree

\def\Aring{\ring A} % finite Cartan matrix
\def\W{\ring W} % finite Weyl group
\def\V{\ring V} % irreducible representations of the finite Lie algebra
\def\D{\ring D} % finite Demazure modules
\def\w{\ring w} % element in finite Weyl group
\def\Rring{\ring R} % finite root system
\def\Rbar{\bar R} % affine root system mod delta
\def\P{\ring P} % finite weight lattice
\def\Q{\ring Q} % finite root lattice
\def\Qbar{\overline{Q}} % projection of the affine root lattice

%%%%%%%%%%%%%%%%%%%%%%%%%%%%%%%%%%%%%%%%%%%%%%%%%%%%%%%%%%%%%%%%%%%%%%%%%%%%%
%%%%%%%%%%%%%%%%%%%%%%%%%%%%%%%%%%%%%%%%%%%%%%%%%%%%%%%%%%%%%%%%%%%%%%%%%%%%%
%%%%%%%%%%%%%%%%%%%%%%%%%%%%%%%%%%%%%%%%%%%%%%%%%%%%%%%%%%%%%%%%%%%%%%%%%%%%%

\begin{document}
\title[]
{BGG reciprocity for current algebras}
\author[]{Vyjayanthi Chari}
\address{Department of Mathematics, University of California, Riverside, CA 92521}
\email{chari@math.ucr.edu}
\author[]{Bogdan Ion}
\address{Department of Mathematics, University of Pittsburgh, Pittsburgh, PA 15260}
\address{Algebra and Number Theory Research Center, Faculty of Mathematics and Computer Science, University of Bucharest,  14 Academiei St., Bucharest, Romania}
\email{bion@pitt.edu}
%\thanks{V.C. was partially supported by DMS-0901253. B.I. was partially supported by  CNCS-UEFISCDI project number 88/05.10.2011.}
\date{May 6, 2014}%
\subjclass[2010]{17B10, 17B67}
\begin{abstract}
In \cite{BCM} it was conjectured that a BGG-type reciprocity holds for the category of graded representations with finite-dimensional graded components for the current algebra associated to a simple Lie algebra. We associate a current algebra to any indecomposable affine Lie algebra and show that, in this generality, the BGG reciprocity is true for the corresponding category of representations.
\end{abstract}
\maketitle
%%%%%%%%%%%%%%%%%%%%%%%%%%%%%%%%%%%%%%%%%%%%%%%%%%%%%%%%%%%%%%%%%%%%%%%%%%%%%
\section*{Introduction} 

Macdonald polynomials are families of  polynomials that depend on several parameters and are associated to affine roots systems or, equivalently, to indecomposable affine Lie algebras $\Gaff$. They encode a wealth of information at the intersection of combinatorics, algebraic geometry, and representation theory. In particular, it is known that, for $\Gaff$ twisted or simply laced untwisted and after a certain specialization of the parameters, nonsymmetric Macdonald polynomials are the Demazure characters for the level one highest weight $\Gaff$-representations \cites{IonNon, SanCon}. The corresponding statement for the remaining affine Lie algebras is generally false.

In this paper, on one hand, we shall be concerned, for any indecomposable affine Lie algebra $\Gaff$,  with a uniform representation theoretical interpretation of the symmetric Macdonald polynomials after the specialization of parameters alluded to above, and, on the other hand, with establishing a link between combinatorial and algebraic properties of Macdonald polynomials and representation theory. The interpretation is in the context of the representation theory of the current algebra $\CG$ which is defined, up to the scaling operator of $\Gaff$, as the special (hyperspecial if $\Gaff$ is of type $A_{2n}^{(2)}$) maximal parabolic subalgebra $\K$ of $\Gaff$. The current algebra $\CG$ has a natural grading that arises from the action of the scaling operator of $\Gaff$. We shall be interested in a  category of graded representations of $\CG$; from this point of view $\K$ is a more canonical object as it contains both $\CG$ and the scaling operator and a $\K$-representation is essentially a graded $\CG$-representation. The relevant category for us is denoted by ${}_{\K}\M$ and has as objects the representations of $\K$ with finite-dimensional eigenspaces for the scaling operator. For instance, the $\K$-stable Demazure modules of the highest weight $\Gaff$-representations are objects in this category, making it clear that ${}_{\K}\M$ is not a semi-simple category. From this point of view, the specialized symmetric Macdonald polynomials  are graded characters of certain universal objects in ${}_{\K}\M$, called local Weyl modules (Theorem \ref{P8ii}). 

In the light of this representation theoretical interpretation, our goal is to establish for ${}_{\K}\M$  an analog of the reciprocity result  proved by  Bernstein-Gelfand-Gelfand \cite{BGG} for the category $\O$ of a simple complex Lie algebra $L$. The reciprocity was conjectured in \cite{BCM} for current algebras associated to untwisted affine Lie algebras and it was proved there for the current algebra of type  $A_1^{(1)}$ and later in \cite{BBCKL} for the current algebra of type $A_n^{(1)}$. In this paper, we give a uniform proof of this conjecture for \emph{all} indecomposable affine Lie algebras $\Gaff$. Our argument rests on the observation in \cite{BBCKL}*{\S 3.10} that the BGG reciprocity statement for the category ${}_{\K}\M$ follows from the graded character equality of the objects that appear in the statement. In  \cite{BBCKL}, the necessary graded character equality was established using a Cauchy kernel expansion that does not exist beyond type $A$. To address this impediment, we introduce a new scalar product \eqref{qscalarproduct} on a relevant ring of characters and show that the specialized symmetric Macdonald polynomials normalized by their square norms with respect to this scalar product are the graded characters of a second family of universal objects in ${}_{\K}\M$, called global Weyl modules (Proposition \ref{P8}). Using these facts, the BGG reciprocity statement ultimately becomes the manifestation of a simple scalar product identity for specialized Macdonald polynomials (Proposition \ref{P6}). 

We present below a brief account of the BGG reciprocity statement and its proof.

Recall that the BGG reciprocity for the category $\O$ of $L$ describes the relationship between the following objects in $\O$: the simple objects $V(\l)$ (parametrized by elements of the dual of a fixed Cartan subalgebra of $L$),  their projective covers $P(\l)$ (which are indecomposable objects in $\O$),  and a third family $M(\l)$ of indecomposable objects,  called Verma modules. The Verma modules are crucial for our understanding of the category $\O$ but the categorical properties that distinguish them seem to be elusive. The projective covers of simple objects have a finite filtration by Verma modules and the Verma modules have finite length and therefore admit a Jordan-H\"older series. The BGG reciprocity states that the filtration multiplicity of a  Verma module $M(\mu)$  in the projective cover $P(\l)$ is equal to the  Jordan-H\"older multiplicity of $V(\l)$ in $M(\mu)$. 
  
The simple objects  of ${}_{\K}\M$ are parametrized by pairs $(\l,k)$ where $\l$ varies over an index set of irreducible finite-dimensional representations of the Levi factor $\G$ of $\K$ (a simple Lie algebra) and $k$ varies over the integers. For the moment, to avoid unravelling more structure and introducing a more notation, we will denote them by $V(\l,k)$ and denote their projective cover by $P(\l,k)$. The analogues  in  ${}_{\K}\M$ of the Verma modules are called a global Weyl modules and denoted by $W(\l,k)$. This concept  was defined in  \cite{ChaPre} for untwisted affine Lie algebras; we give here a uniform construction for all affine Lie algebras.  A second family of indecomposable objects in ${}_{\K}\M$ consists of the unique maximal finite-dimensional quotients of $W(\l,k)$ in ${}_{\K}\M$; these are denoted by $\Wloc(\l,k)$ and called local Weyl modules. A deep result in the theory is the freeness of $W(\l,k)$ as a module over a certain polynomial algebra $\A_\l$. This allows us, for example, to relate in a precise way the graded $\G$-characters of $W(\l,r)$ and $\Wloc(\l,r)$. 

There are similarities but also significant differences, not only between the categories $\O$ and ${}_\K\M$, but also between the structure of the Verma modules and that of the global Weyl modules. One particular complication is the fact that global Weyl modules are not of finite length. Nevertheless, there exists a BGG-type reciprocity in this context (Theorem \ref{T6}): Any $P(\l,k)$ has a filtration by global Weyl modules, and the filtration multiplicity of $W(\mu,s)$ in $P(\l, k)$ is equal to the Jordan--H\"older multiplicity of $V(\l,k)$ in $\Wloc(\mu,s)$. 

The proof follows the strategy laid out in \cite{BBCKL}. For any object $M$ in ${}_{\K}\M$ one can construct a more or less canonical descending filtration and a direct sum of global Weyl modules that surjects onto $\gr M$, the graded object associated to the filtration. To show that this construction, when applied to $P(\l,k)$, produces a bijection between the appropriate direct sum of global Weyl modules and $\gr P(\l,k)$ it is enough to compare their graded $\G$-characters and it can be seen that, in fact,  the equality of multiplicities in the BGG reciprocity statement implies that $P(\l,k)$ has a filtration by global Weyl modules. Taking into account the fact that the graded $\G$-characters of  local and global Weyl modules can be expressed in terms of specialized symmetric Macdonald polynomials, we  establish  the equality of multiplicities in the BGG reciprocity statement  as a consequence of a scalar product identity for specialized Macdonald polynomials.

In conclusion, we point out that the conventional definition of the current algebra is as follows. The current algebra $L[t]$ associated to a simple complex Lie algebra $L$ is  the Lie algebra of polynomial maps $\C\to L$. It can be identified with the $\C$-vector space $L\otimes_\C \C[t]$ with Lie bracket the $\C[t]$-bilinear extension of the Lie bracket of $L$. The current algebras $L[t]$ are isomorphic (as Lie algebras) with the algebras $\CG$ for $\Gaff$ untwisted indecomposable affine Lie algebra. A twisted current algebra is conventionally defined as a fixed point subalgebra of  $L[t]$ under an automorphism induced by a non-trivial outer automorphism of $L$. With the exception mentioned below, the twisted current algebras are isomorphic to the algebras $\CG$ for $\Gaff$ twisted indecomposable affine Lie algebra. For $L$ of type $A_{2n}$, the fixed point construction corresponds to a special, but not hyperspecial, maximal parabolic subalgebra of $\Gaff$ of type $A_{2n}^{(2)}$ and it is hence different from our algebra $\CG$. 
%The essential freeness property of the global Weyl modules is false for this non-hyperspecial maximal parabolic (see \cite{FouKus}) and this mirrors in some sense the more complicated structure of the category of smooth representations of a reductive $p$-adic group with vectors fixed by a special non-hyperspecial maximal compact open subgroup. 
The theory of local and global Weyl modules for the hyperspecial maximal parabolic in type $A_{2n}^{(2)}$ is developed in \cite{CIK}. The 
%failure of the freeness property for the special non-hyperspecial maximal parabolic in type $A_{2n}^{(2)}$ (which is obtained as a fixed point subalgebra) and its 
validity of the freeness property of the global Weyl modules for the hyperspecial maximal parabolic subalgebra indicates that perhaps there must be a more general  and coherent theory of local and global Weyl modules extending beyond the usual map algebras or equivariant map algebras studied, for example, in \cites{CFK, NSS, FMSa}.

\begin{ack} 
Part of this work was carried out during the Spring 2013 semester program ``Automorphic Forms, Combinatorial  Representation Theory and Multiple Dirichlet Series'' at the Institute for Computational and Experimental Research in Mathematics, Providence, RI. We thank ICERM and the organizers of the semeseter program for the hospitality and support. V.C. was partially supported by DMS-0901253. B.I. was partially supported by  CNCS-UEFISCDI project number 88/05.10.2011.
\end{ack}

%%%%%%%%%%%%%%%%%%%%%%%%%%%%%%%%%%%%%%%%%%%%%%%%%%%%%%%%%%
\section{Affine Lie algebras }

\subsection{} Hereafter, unless otherwise specified, all vector spaces are complex vector spaces and $\otimes$ stands for $\otimes_\C$.

\subsection{} We refer to \cite{KacInf} for the general theory of affine Lie algebras. Let $A=(a_{ij})_{0\leq i,j\leq n}$ be an indecomposable affine Cartan matrix, $S(A)$ the Dynkin diagram, and  $(a_0,\dots, a_n)$  the numerical labels of $S(A)$ in Table Aff from \cite{KacInf}*{pg.54-55}. We denote by $(a_0^\vee,\dots, a_n^\vee)$ the labels of the dual Dynkin diagram $S({}^tA)$ which is obtained from $S(A)$ by reversing the direction of all arrows and keeping the same enumeration of the vertices. The associated finite Cartan matrix is $\Aring = (a_{ij})_{1\leq i,j\leq n}$.   Note that  $a_0^\vee=1$ for all indecomposable affine Cartan matrices while $a_0=1$ in all cases except for $A=A_{2n}^{(2)}$ for which $a_0=2$.

%%%%%%%%%%%%%%%%%%%%%%%%%%%%%%%%%%%%%%%%%%%%%%%%%%%%%%%%%%
\subsection{} Let $(\Haff, R, R^{\vee})$ and $(\H, \Rring, \Rring^{\vee})$ be realizations of $A$ and  $\Aring$, respectively,  and let  $\Gaff$ and $\G$ be the associated affine and  finite--dimensional simple Lie algebras, respectively. We can arrange that $\H\subset \Haff$,  $\Rring\subset R$ and that $\G$ is a Lie subalgebra of $\Gaff$. The subspaces  $\Haff$ and  $\H$  are  the corresponding Cartan subalgebras, and $R$, $\Rring$ are the root systems corresponding to $(\Gaff, \Haff)$ and $(\G,\H)$, respectively, and we have
\begin{equation}
\Gaff=\Haff\oplus\bigoplus_{\a\in R}\Gaff_\a,\quad \G=\H\oplus\bigoplus_{\a\in\Rring}\G_\alpha .
\end{equation}
We refer to \cite{KacInf} for the details of this construction.

%%%%%%%%%%%%%%%%%%%%%%%%%%%%%%%%%%%%%%%%%%%%%%%%%%%%%%%%%%
\subsection{}

Fix a Borel subalgebra $\Haff\subset\Baff$ of $\Gaff$ and the corresponding Borel subalgebra $\H\subset\B$ of $\G$. For later use, we denote the corresponding nilpotent radicals by $\Naff$ and $\N$, respectively. Let $R^+$ and $\Rring^+$ be the set of roots of $(\Baff,\Haff)$ and $(\B,\H)$, respectively. With the notation $R^-:=-R^+$ and $\Rring^-=-\Rring^+$ we have
\begin{equation}
R=R^+\cup R^-,\quad \Rring=\Rring^+\cup\Rring^- .
\end{equation}
Let  $\{\a_i\}_{0\leq i\leq n}$ the basis of $R$ determined by $R^+$ and let  $\{\a_i^\vee\}_{0\leq i\leq n}\subset\Haff$ be the corresponding set of coroots. The  basis of $\Rring$ determined by $\Rring^+$ is  $\{\a_i\}_{1\leq i\leq n}$ and  $\{\a_i^\vee\}_{1\leq i\leq n}\subset\H$ are the associated coroots.

%%%%%%%%%%%%%%%%%%%%%%%%%%%%%%%%%%%%%%%%%%%%%%%%%%%%%%%%%%
\subsection{}

The center of $\Gaff$ is one-dimensional and  is spanned by the canonical central element
\begin{equation}
K=a_0^\vee\a_0^\vee+\cdots+a_n^\vee\a_n^\vee\in\Haff.
\end{equation}
Fix $d\in\Haff$ such that $\a_0(d)=1$ and $\a_i(d)=0$ for $1\leq i\leq n$; $d$ is called the scaling element and  is unique  modulo the subspace spanned by $K$. With this notation, we have the following decomposition
\begin{equation}
\Haff=\H\oplus\C K\oplus\C d.
\end{equation}
Let
\begin{equation}
\d=a_0\a_0+\cdots+a_n\a_n\in R^+,
\end{equation}
be the positive non-divisible null-root in $R$.  For $1\le i\leq n$, define  $\L_i\in\H^*$ by $\L_i(\a_i^\vee)=\d_{i,j}$, for $1\leq j\leq n$, where $\d_{i,j}$  is Kronecker's delta symbol. The element $\L_i$ is the fundamental weight of $\G$ corresponding to $\a_i^\vee$. We also define $\L_0\in \Haff^*$ by $\L_0(\a_i^\vee)=\delta_{0,i}$, for $0\leq j\leq n$, and $\L_0(d)=0$. The element $\L_0$ is the fundamental weight of $\Gaff$ corresponding to $\a_0^\vee$.

We have
\begin{equation}
\Haff^*=\H^*\oplus\C \d\oplus\C \L_0.
\end{equation}
 An important role is played by the root
\begin{equation}
\th=a_1\a_1+\cdots+a_n\a_n\in\Rring^+.
\end{equation}
This is  the highest root of $\Rring$ if $\Gaff$ is untwisted or of type $A_{2n}^{(2)}$, and is  the dominant short root otherwise.

%%%%%%%%%%%%%%%%%%%%%%%%%%%%%%%%%%%%%%%%%%%%%%%%%%%%%%%%%%
\subsection{} We will add $\Re$ as a subscript whenever we refer to the real form of  $\Haff$ or $\Haff^*$ spanned by the simple coroots and $d$, or by the simple roots and $\L_0$, respectively. A similar convention applies for $\H$ and $\H^*$.

The following defines the non-degenerate normalized standard bilinear  form $(\ , \ )$ on $\Haff_\Re^*$:
\begin{equation}
(\a_i,\a_j):=d_i^{-1}a_{ij}, \ 0\leq i,j\leq n\ ,\quad
(\L_0,\a_i):=\d_{i,0}a_0^{-1},\quad \text{and}\quad(\L_0,\L_0):=0,
\end{equation}
with $d_i:= a_ia_i^{{\vee}-1}$. In particular, we have
\begin{equation}
(\d,\H_\Re^*)=0, \quad (\d,\d)=0, \quad\text{and}\quad (\d,\L_0)=1
\end{equation}
The corresponding isomorphism $\nu:\Haff_\Re\to\Haff_\Re^*$ sends $\a_i^\vee$ to $d_i\a_i$, $K$ to $\d$, and $d$ to $a_0\L_0$. We will routinely identify elements via $\nu$ and regard for example, co-roots and co-weights as elements of $\Haff_\Re^*$.

%%%%%%%%%%%%%%%%%%%%%%%%%%%%%%%%%%%%%%%%%%%%%%%%%%%%%%%%%%
\subsection{}

With respect to $(\ ,\ )$, the real roots of $R$  have three possible lengths if $\Gaff$ is of type $A^{(2)}_{2n}$, $n\geq 2$, the same length if the affine Dynkin diagram is simply laced, and two possible lengths otherwise. We denote the set of short roots by $R_s$, the set of long roots by $R_\ell$, and, for $\Gaff$ of type $A^{(2)}_{2n}$, denote the set of medium length roots by $R_m$. To avoid making the distinction later on, if there is only one root length we consider all real roots to be short. Similar notation and conventions apply to  $\Rring$.

Observe  that
\begin{equation}
(\th,\th)=2a_0\quad\text{and}\quad \max_{\a\in R}(\a,\a)=2r
\end{equation}
where $r$ is the numerical label of Table Aff $r$ in \cite{KacInf} corresponding to the Dynkin diagram of $\Gaff$. For any $\alpha\in R$, it is convenient to consider the integer
\begin{equation}
r_\a:=\max\{\frac{(\a,\a)}{2}, 1\}.
\end{equation}

The imaginary roots of $R$ are $$R^{im}=(\Z\setminus\{0\})\d,$$ while the real roots are
\begin{align*}
&R^{re}:=\frac{1}{2}(\Rring+(2\Z+1)\d)\cup(\Rring +2\Z\d)  & &\text{if } A=A^{(2)}_{2},\\
&R^{re}:=\frac{1}{2}(\Rring_\ell+(2\Z+1)\d)\cup(\Rring_s+\Z\d)\cup(\Rring_\ell +2\Z\d)& &\text{if } A=A^{(2)}_{2n}, n\geq 2,\\
&R^{re}:=(\Rring_s+\Z\d)\cup(\Rring_\ell +r\Z\d)& &\text{otherwise}.
\end{align*}
The positive affine roots can be described as $R^+=R^{re,+}\cup R^{im,+}$  where $R^{im,+}=\Z_{>0}\d $,  and

\begin{align*}
R^{re,+}=&\frac{1}{2}(\Rring^++(2\Z_{\geq 0}+1)\d) \cup\frac{1}{2}(\Rring^-+(2\Z_{\geq 0}+1)\d)\cup(\Rring^+ +2\Z_{\geq 0}\d)\cup(\Rring^- +2\Z_{>0}\d)  & &\text{if } A=A^{(2)}_{2},\\
R^{re,+}=&\frac{1}{2}(\Rring^+_\ell+(2\Z_{\geq 0}+1)\d)\cup\frac{1}{2}(\Rring^-_\ell+(2\Z_{\geq 0}+1)\d)\cup(\Rring^+_s+\Z_{\geq 0}\d)\cup(\Rring^-_s+\Z_{>0}\d)& &\text{if } A=A^{(2)}_{2n}, n\geq 2,\\
&  \cup(\Rring^+_\ell +2\Z_{\geq 0}\d)\cup(\Rring^-_\ell +2\Z_{>0}\d)& &\\
R^{re,+}=&(\Rring^+_s+\Z_{\geq 0}\d)\cup(\Rring^-_s+\Z_{> 0}\d)\cup(\Rring^+_\ell +r\Z_{\geq 0}\d)\cup(\Rring^-_\ell +r\Z_{>0}\d)& &\text{otherwise}.
\end{align*}
%%%%%%%%%%%%%%%%%%%%%%%%%%%%%%%%%%%%%%%%%%%%%%%%%%%%%%%%%%
%\subsection{}

Let $\{\L_i\}_{1\leq i\leq n}$ and  $\{\L_i^\vee~|~\L_i^\vee=d_i^{-1}\L_i,~1\leq i\leq n\}$ be the fundamental weights and fundamental coweights of $\G$, respectively. The weight and root lattices of $\G$  are denoted by $\P$ and $\Q$ and the  lattice spanned by $\{\a_i^\vee~|~ 1\leq i\leq n\}$, is denoted by $\Q^\vee$. The root  lattice of $\Gaff$ is denoted by $Q$. For us, the relevant set of weights of $\Gaff$ is the integral lattice spanned by $\{\L_i\}_{0\leq i\leq n}$  and $a_0^{-1}\d$, which we denote by $P$. We have $P=\P\oplus \Z\L_0\oplus \Z a_0^{-1}\d$, while $Q=\Q\oplus\Z\d$ unless $\Gaff$ is of type $A^{(2)}_{2n}$ in which case $\Q\oplus\Z\d\subset Q$ is a sub-lattice of  index two. To be able to have uniform statements in certain situations we need to consider the lattice $\Qbar\subset \H^*_\Re$, defined as the orthogonal projection of $Q\subset \Haff^*_\Re$ onto $\H^*_\Re$. The lattice $\Qbar$ equals $\Q$ unless $\Gaff$ is of type $A^{(2)}_{2n}$ for which $\Q\subset \Qbar$ is a sub-lattice of index two. The set of dominant and anti-dominant weights of $\G$ is denoted by $\P^+$ and $\P^-$, respectively; the set of dominant elements of $P$ of $\Gaff$  is denoted by $P^+$. Define $\Q^+$ and $\Qbar^+$ as the cone spanned by $\Rring^+$ inside $\Q$ and, respectively, $\Qbar$.
%%%%%%%%%%%%%%%%%%%%%%%%%%%%%%%%%%%%%%%%%%%%%%%%%%%%%%%%%%
\subsection{}

Given $\a\in R^{re}$ and $x\in \Haff^*$ let
\begin{equation}
s_\a(x):=x-\frac{2(x,\a)}{(\a,\a)}\a\ .
\end{equation}
The affine Weyl group $W$ is the subgroup of ${\GL}(\Haff_\Re^*)$ generated by all $s_\a$ (the simple reflections $s_i=s_{\a_i}$, $0\le i\leq n$ are enough). The finite Weyl group $\W$ is the subgroup generated by $s_1,\dots,s_n$. The bilinear form on $\Haff_\Re^*$ is equivariant with respect to the affine Weyl group action. Both the finite and the affine Weyl group are Coxeter groups and they can be abstractly defined as generated by $s_1,\dots,s_n$, respectively $s_0,\dots,s_n$, and the following relations:
\begin{enumerate}[label={\alph*)}]
\item reflection relations: $s_i^2=1$;
\item braid relations: $s_is_j\cdots =s_js_i\cdots $ (there are $m_{ij}$ factors on each side, $m_{ij}$ being equal to $2,3,4,6$ if the number of laces connecting the corresponding nodes in the Dynkin diagram is $0,1,2,3$ respectively).
\end{enumerate}
For $x\in \Haff^*$ we denote by $W(x)$ and $\W(x)$ the orbit of $x$ under the action of $W$ and $\W$, respectively.

Let $M\subset\H_\Re^*$ be the lattice generated by $\W(a_0^{-1}\theta)$.  The affine Weyl group contains the finite Weyl group and a normal abelian subgroup isomorphic to $M$. We will denote the latter by $\tau(M)$ and its elements by $\tau_\mu$, $\mu\in M$. The action by conjugation of $\W$ on $\tau(M)$ and the usual action of $\W$ on $M\subset \H_\Re^*$ are related by
\begin{equation}
\w\tau_\mu \w^{-1}=\tau_{\w(\mu)}.
\end{equation}
This allows $W$ to be presented as a semidirect product $W\cong\W\ltimes M$.

For any real number $s$, the subset  $\Haff^*_s=\{ x\in\Haff_\Re\ ~|~ (x,\d)=s\}$ is an (affine) hyperplane of $\Haff_\Re^*$, called the level $s$ of $\Haff_\Re^*$. We have
\begin{equation}
\Haff^*_s=\Haff^*_0+s\L_0=\H_\Re^*+{\mathbb R}\d+s\L_0\ .
\end{equation}
The action of $W$ preserves each $\Haff^*_s$ and  we can identify each level canonically with $\Haff^*_0$ and obtain an (affine) action of $W$ on $\Haff^*_0$. If $s_i\in W$ is a simple reflection, write $s_i(\cdot)$ for the usual (level zero) action of $s_i$ on $\Haff^*_0$ and $s_i\<\cdot\>$ for the affine action of $s_i$ on $\Haff^*_0$ corresponding to the level one action. For example, the level zero action of $s_0$ and $\tau_\mu$ is
\begin{equation}
\begin{aligned}
s_0(x)     & =  s_\th(x)+(x,\th)a_0^{-1}\d\ ,\\
\tau_\mu(x)   & =  x - (x,\mu)\d\ ,
\end{aligned}
\end{equation}
and the level one action of the same elements is
\begin{equation}
\begin{aligned}
s_0\<x\>   & =  s_\th(x)+(x,\th)a_0^{-1}\d-\a_0\ ,\\
\tau_\mu\<x\> & =  x + \mu - (x,\mu)\d -\frac{1}{2}|\mu|^2\d \ .
\end{aligned}
\end{equation}
The level one action on $\Haff^*_0$ induces an affine action of $W$ on $\H_\Re^*$. As a matter of notation, we write $w\cdot x$
for the level one affine action of $w\in W$ on $x\in \H_\Re^*$. For example,
\begin{equation}
\begin{aligned}
s_0\cdot x   & =  s_\th(x)+a_0^{-1}\th\ ,\\
\tau_\mu\cdot x & =  x  + \mu  \ .
\end{aligned}
\end{equation}
The fundamental alcove is defined as
\begin{equation}
{\mathcal C}:=\{ x\in \H_\Re^*\ | \ (x+\L_0,\a_i^\vee)\geq 0\ ,\ 0\leq
i\leq n\}\ .
\end{equation}
We remark that 
\begin{equation}
P^+_1:=\left(\P\cap{\mathcal C}\right)+\L_0
\end{equation} 
is the set of   level one dominant weights of $\Gaff$ and that
\begin{equation}
\O_{\P}:=\P\cap {\mathcal C}
\end{equation}
is a set of representatives for the level one $W$ orbits on $\P$.

If we examine the orbits of the level zero action of the affine Weyl group $W$  on the real affine roots $R^{re}$ we find the following:
\begin{enumerate}[label={\alph*)}]
\item if $\Gaff$ is not of type $C_n^{(1)}$ there are as many orbits as root lengths;
\item if $\Gaff$ is of type $C_n^{(1)}$ then there are three orbits:
$$ W(\a_1)=\Rring_s+\Z\d, \quad W(\a_n)=\Rring_\ell+2\Z\d\quad\text{and}\quad W(\a_0)=\Rring_\ell+(2\Z+1)\d.$$
\end{enumerate}

%%%%%%%%%%%%%%%%%%%%%%%%%%%%%%%%%%%%%%%%%%%%%%%%%%%%%%%%%%

\subsection{}
For $\L\in P^+$, let $V(\L)$ be  the unique irreducible  highest weight  $\Gaff$-module with highest weight $\L$. For each $w\in W$ and $\L\in P^+$ the weight space $V(\L)_{w(\L)}$ is one-dimensional. The $\Baff$-module generated by $V(\L)_{w(\L)}$  depends only on  $w(\L)$ and is denoted by $D(w(\Lambda))$.  These modules  are  called  Demazure modules and they are finite dimensional vector spaces. The eigenspaces of the scaling element $d$ define a canonical $\Z$-grading  on the Demazure modules. Replacing $\L$ with $\L+sa_0^{-1}\d$ produces modules  that differ only in the $d$ action: the grading of one is a shift  of the other.
In this paper, we are concerned  with the Demazure modules of $\Gaff$ associated to a level one dominant weight. Since
\begin{equation}
P^+_1=(\P\cap\mathcal C)+\L_0,
\end{equation}
the level one Demazure modules are of the form $D(\l+sa_0^{-1}\d+\L_0)$ with $\l\in\P$. As these are the only Demazure modules we will be concerned with, to keep the notation as simple as possible, we will use $D(\l+sa_0^{-1}\d)$ to refer to $D(\l+sa_0^{-1}\d+\L_0)$.

To a Demazure module $D(w(\L))$ we can associate its character
\begin{equation}\label{Demchr}
\chi(D(w(\L)))=e^{-\L_0}\sum_{\Upsilon\in P} \dim_\C(D(w(\L))_{\Upsilon})
\cdot e^{\Upsilon}.
\end{equation}

For $\l\in \P^+$,  let $\V(\l)$ be the unique highest weight  $\G$-module with highest weight $\l$. The  Demazure modules in $\V(\l)$ are defined in an analogous way and will be denoted by $\D(\mu)$, where $\mu\in\W(\l)$. If $w_\circ$ denotes the longest element of $\W$, then  $\D(w_\circ(\l))$ coincides with the highest weight module $\V(\l)$.

%%%%%%%%%%%%%%%%%%%%%%%%%%%%%%%%%%%%%%%%%%%%%%%%%%%%%%%%%
\subsection{}\label{convention}  We shall say that  $\Gaff$ is an affine Lie algebra of type I if it is indecomposable and either a simply-laced untwisted affine Lie algebra or a twisted affine Lie algebra. We shall say that $\Gaff$ is an affine Lie algebra of type II if it is an indecomposable, non-simply-laced, untwisted affine Lie algebra.

  The type I algebras are distinguished by certain special properties and we list those which are most relevant for our study. They are precisely the indecomposable affine Lie algebras for which   $\a_0$ is a short root. Further, $M=\P$ if  $\Gaff$ is of type $A_{2n}^{(2)}$ and $M=\Q$ otherwise. The set of  non-zero elements of $\O_{\P}$ is empty if $\Gaff$ is of type $A_{2n}^{(2)}$ and is the set of minuscule weights of $\G$ otherwise.
  
The root system $\Rring$ determines  the affine Lie algebra  $\Gaff$ of type I  uniquely except  in the case when    of type $A_{2n-1}^{(2)}$ and $A_{2n}^{(2)}$
 when $\Rring$ is of type $C_n$. A faithful invariant in this sense is
\begin{equation}
\Rbar=\{\a_{|\H}~|~\a\in R\}\subset \H^*
\end{equation}
which is again a  root system, possibly non-reduced. We have $\Rbar=\Rring$ in all cases except for $\Gaff$ of type $A_{2n}^{(2)}$ for which $\Rbar$ is $BC_n$, the unique irreducible non-reduced root system of rank $n$. The correspondence between the set of type I affine Lie algebras $\Gaff$ and the set of irreducible (possibly non-reduced) root systems $\Rbar$ is bijective. The lattice $\Qbar$ is precisely the lattice spanned by $\Rbar$.

%%%%%%%%%%%%%%%%%%%%%%%%%%%%%%%%%%%%%%%%%%%%%%%%%%%%%%%%%%

%%%%%%%%%%%%%%%%%%%%%%%%%%%%%%%%%%%%%%%%%%%%%%%%%%%%%%%%%%
\section{Current algebras}

\subsection{}\label{currentdef} An important Lie algebra is the maximal standard parabolic subalgebra of $\Gaff$ corresponding to the Dynkin diagram of $\G$. The center of $\Gaff$ splits over this maximal parabolic and for this reason it is preferable to work with the maximal parabolic modulo the center. Therefore, let
\begin{equation}
\K=(\H\oplus\C d)\oplus\bigoplus_{\a\in R^+}\Gaff_\a\oplus\bigoplus_{\a\in\Rring^-}\Gaff_\a .
\end{equation}
Let us also establish the following notation: $R_{\K}=R^+\cup \Rring^-$, $R_{\K}^-=R_{\K}\setminus R_{\K}^+$, $R_{\K}^+=R_{\K}^{re,+}\cup R^{im,+}$, and
\begin{align*}
R_\K^{re,+}=&\left(\frac{1}{2}(\Rring^++(2\Z+1)\d)\cup(\Rring^++\Z\d)\right)\cap R^{re,+}  & &\text{if } A=A^{(2)}_{2},\\
R_\K^{re,+}=&\left(\frac{1}{2}(\Rring^+_\ell+(2\Z+1)\d)\cup(\Rring^++\Z\d)\right)\cap R^{re,+} & &\text{if } A=A^{(2)}_{2n}, n\geq 2,\\
R_\K^{re,+}=&(\Rring^++\Z\d)\cap R^{re,+}& &\text{otherwise}.
\end{align*}
Define
\begin{equation}
 \Naff^{re}_\K=\bigoplus_{\a\in R_\K^{re,+}}\Gaff_\a\subset \Naff_\K=\bigoplus_{\a\in R_\K^+}\Gaff_\a\supset \Naff^{im}=\bigoplus_{k\in\Z_{>0}}\Gaff_{k\d},\quad \bar\Naff_\K=\bigoplus_{\a\in R_\K^{-}}\Gaff_\a
\end{equation}

The current algebra is  defined as the ideal of $\K$ described as
\begin{equation}
\CG=\H\oplus\bigoplus_{\a\in R^+}\Gaff_\a\oplus\bigoplus_{\a\in\Rring^-}\Gaff_\a
\end{equation}
Note that $\Naff_\K=\Naff_\K^{re}\oplus\Naff^{im}$ and $\CG=\bar\Naff_\K\oplus\H\oplus\Naff^{im}\oplus\Naff^{re}_\K$. The scaling element $d$ acts both on $\K$ and $\CG$ inducing Lie algebra $\Z_{\geq 0}$-gradings. The degree zero Lie subalgebras are $\K(0)=\G\oplus\C d$ and $\CG(0)=\G$, respectively. The subspaces consisting of strictly positive homogeneous components are ideals denoted by $\K_+$ and $\CG_+$, respectively. Therefore, the following short exact sequences split as $\K$--modules:
\begin{equation}
0\to \K_+\to\K\to \K(0)\to 0, \quad 0\to \CG_+\to\CG\to \CG(0)\to 0
\end{equation}
For later use we denote by $\pi:\K\to \K(0)$ the canonical surjection and by $\vartheta:\K(0)\to \K$ the canonical injection (a splitting of $\pi$); they are Lie algebra morphisms.

\begin{Rem} The current algebra is typically defined in the literature as follows. Let $L$ denote a simple Lie algebra and $\sigma$ a diagram automorphism of $L$ whose order we denote by $m$. Recall that there is a bijective correspondence between (isomorphism classes of) indecomposable affine Lie algebras $\Gaff$ and pairs $(L,\sigma)$; under this correspondence $\G$ is isomorphic to the fixed point Lie subalgebra $L^\sigma$. In fact, the Kac label for the Dynkin diagram of $\Gaff$ is $X^{(m)}$ where $X$ is the Dynkin type of $L$. Given  an indeterminate $t$, the  Lie algebra $L[t]$ is defined as  the vector space $L\otimes \C[t]$ with Lie bracket given by $[x\otimes f, y\otimes g]=[x,y]\otimes fg$ for $x,y\in L$ and $f,g\in\C[t]$. The automorphism $\sigma$ can be extended to $L[t]$ by  $\sigma(x\otimes f(t)):=\sigma(x)\otimes f(e^{-2\pi i/m} t)$. The fixed point Lie subalgebra $L[t]^\sigma$ is called a current algebra in the literature. The Lie algebras $\CG$ and $L[t]^\sigma$ are isomorphic in all cases with the exception of $\Gaff$ of type $A_{2n}^{(2)}$. In this situation, $L[t]^\sigma$  corresponds to the special not hyperspecial standard maximal parabolic subalgebra of $\Gaff$ and $\CG$ corresponds to the hyperspecial standard maximal parabolic subalgebra $\K$ of $\Gaff$ (see \cite{TitRed}). Therefore, for $\Gaff$ of type $A_{2n}^{(2)}$ all the concepts discussed in this section are considered here for the first time.
\end{Rem}
%%%%%%%%%%%%%%%%%%%%%%%%%%%%%%%%%%%%%%%%%%%%%%%%%%%%%%%%%%
\subsection{}

Let $M$ be $\K$-module. Since the central element is not contained in $\K$ the relevant set of integral weights is $P_\K=\P\oplus a_0^{-1}\Z\d$. We denote by $P_\K^+$ the subset of dominant elements of $P_\K$; more precisely, $P_\K^+=\P^+\oplus a_0^{-1}\Z\d$. We say that $M$ is a weight $\K$-module if
\begin{equation}
M=\bigoplus_{\L\in P_\K}M_\L ,
\end{equation}
where $M_\L=\{m\in M~|~h\cdot m=\L(h)m, \forall h\in \H\oplus\C d\}$ is the weight space of $M$ corresponding to $\L$. Denote by $\wtaff(M)$ the set of weights for which the weight spaces are non-zero. Note that the weight $\K$-modules have a canonical $\Z$-grading given by the  eigenspaces of $d$.

We have a corresponding notion of weight $\CG$-module where the set indexing the weights is $\P$ and the abelian subalgebra is $\H$; in other words, a weight  $\G$-module. The set of weights with non-zero weight spaces for the weight $\CG$-module $M$ is denoted by $\wt(M)$. We use the same notation for any weight $\G$-module.

As it is well-known, for $\l\in \P^+$ we have $\wt(\V(\l))=\W(\{\mu\in \P^+~|~ \l-\mu\in \Q^+\})$. Again, to be able to write some uniform statements we also need to consider the set
$$
\wtbar(\V(\l))=\W(\{\mu\in \P^+~|~ \l-\mu\in \Qbar^+\})
$$
which is identical with $\wt(\V(\l))$, unless $\Gaff$ is of type $A_{2n}^{(2)}$.
%%%%%%%%%%%%%%%%%%%%%%%%%%%%%%%%%%%%%%%%%%%%%%%%%%%%%%%%%%
\subsection{}

We define the following two categories. The first category, denoted by ${}_{\CG}\M^\Z$, has as objects the  $\Z$-graded weight $\CG$-modules with finite--dimensional graded components; the morphisms are  maps of $\Z$-graded $\CG$-modules. The second category, denoted by ${}_{\K}\M$, has as objects the  weight $\K$-modules such that the eigenspaces of $d$ are finite dimensional; the morphisms are maps of  $\K$-modules. For current algebras, the homological properties of the category ${}_{\CG}\M^\Z$ were first studied in \cite{ChaGre}, and the interest in these properties, gradually  emerged from a long sequence of developments that started with \cites{ChaInt, ChaPre} as a category relevant for the structure of the category of  finite dimensional representations of (quantum) affine Lie algebras; it is also the natural context for certain questions in mathematical physics.

Let $*: {}_{\K}\M\to{}_{\K}\M$ be the following duality functor. If $M$ is an object in ${}_{\K}\M$ then $M^*$ is the $\K$-submodule of $\Hom_\C(M,\C)$ which is spanned by the linear duals of the $d$-eigenspaces of $M$; for morphisms $f:M\to N$, $f^*:N^*\to M^*$ is the restriction to $N^*$ of $\Hom_\C(\cdot,\C)(f)$. It is a straightforward check to see that
$*^2$ is naturally isomorphic to the identity functor of ${}_{\K}\M$.

\begin{Prop}\label{P1}
${}_{\CG}\M^\Z$ and ${}_{\K}\M$ are isomorphic.
\end{Prop}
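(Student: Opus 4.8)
The plan is to exhibit the isomorphism directly on objects and morphisms, exploiting the fact that $\K=\CG\oplus\C d$ is a semidirect product in which $\CG$ is an ideal and $d$ acts as the derivation defining the grading of $\CG$. Concretely, a $\K$-module is the same datum as a $\CG$-module $M$ together with a single operator $D$ (the action of $d$) subject to the compatibility $D(x\cdot m)-x\cdot(D m)=[d,x]\cdot m$ for all $x\in\CG$ and $m\in M$; since $[d,x]=\deg(x)\,x$ on homogeneous $x$, this says that $D$ measures degrees. The first thing to record is that on a weight $\K$-module the operator $D$ is automatically semisimple with integer eigenvalues: a weight $\L\in P_\K$ has the form $\L=\lambda+s a_0^{-1}\d$ with $\lambda\in\P$ and $s\in\Z$, and because $\d(d)=a_0$ while $\lambda(d)=0$ we get $\L(d)=s$. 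Thus $d$ acts on $M_\L$ by the scalar $s$, and restricting to $\H$ kills the $a_0^{-1}\d$ part (as $\d$ vanishes on the coroots of $\G$), so the underlying $\CG$-module is a genuine weight module with weights in $\P$.

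I would then define $F\colon {}_\K\M\to {}_{\CG}\M^\Z$ by restricting the action to $\CG$ and declaring the degree-$s$ component to be the $d$-eigenspace $M[s]=\{m: d\cdot m=s m\}$. Since $\CG$ is $\Z_{\ge 0}$-graded and $d$ acts by the grading derivation, for $x\in\CG$ of degree $k$ and $m\in M[s]$ one has $d\cdot(x\cdot m)=[d,x]\cdot m+x\cdot(d\cdot m)=(k+s)(x\cdot m)$, so $x\cdot m\in M[s+k]$; hence $M=\bigoplus_s M[s]$ is a graded $\CG$-module, and the finite dimensionality of the $d$-eigenspaces is exactly the finiteness of graded components. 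In the reverse direction I would define $G\colon {}_{\CG}\M^\Z\to {}_\K\M$ by letting $d$ act on the degree-$s$ component $N[s]$ as the scalar $s$; the same one-line computation shows the resulting operator satisfies the required relation with the $\CG$-action, so $N$ becomes a $\K$-module whose weight spaces are $N_\lambda\cap N[s]$ of weight $\lambda+sa_0^{-1}\d$ and whose $d$-eigenspaces $N[s]$ are finite dimensional, i.e. an object of ${}_\K\M$. These functors are mutually inverse on objects, since both simply transport the same underlying vector space between ``a compatible $d$-action'' and ``the associated grading'', and the eigenvalue computation above shows the two encodings determine each other.

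On morphisms the correspondence is equally transparent: a $\K$-module map commutes with $d$ and therefore preserves the $d$-eigenspaces, hence is a graded $\CG$-module map, while a graded $\CG$-module map commutes with $\CG$ and preserves degrees and therefore commutes with $d$ (which acts as a scalar on each component), hence is a $\K$-module map. Thus $F$ and $G$ are inverse isomorphisms of categories. No real obstacle arises here; the only point requiring care is the bookkeeping that identifies the $d$-eigenvalue on the weight $\lambda+sa_0^{-1}\d$ with the grading index $s$, which is what underlies the compatibility between the semisimple action of $d$ and the grading derivation of $\CG$.
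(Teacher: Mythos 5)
Your proof is correct and takes essentially the same approach as the paper: both define the functor that restricts the action to $\CG$ and reads the $\Z$-grading off the $d$-eigenspaces, together with the inverse functor that lets $d$ act by the grading index on each graded component. The paper leaves the compatibility and eigenvalue bookkeeping as a straightforward verification, which you have simply carried out explicitly.
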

\begin{proof}
The relevant functors relating the two categories
\begin{equation}
F: {}_{\K}\M\rightarrow  {}_\CG\M^\Z, \quad  G: {}_\CG\M^\Z\rightarrow {}_{\K}\M,
\end{equation} are defined as follows.
The functor $F$  remembers only the $\CG$-module structure and the $\Z$-grading  induced by the action of $d$. The functor $G$  extends the $\CG$-module structure to a $\K$-module structure by letting $d$ act as multiplication by  $m$ on the $m$--th graded component of a $\CG$-module. It is straightforward to verify that the two functors are inverse to each other.
\end{proof}
Many  of the concepts that have been studied in the context of the current algebra and the category ${}_{\CG}\M^\Z$ can be, thanks to Proposition \ref{P1}, naturally studied in the context of the maximal parabolic algebra and the category  ${}_{\K}\M$ and we adopt this  point of view in this paper.
%%%%%%%%%%%%%%%%%%%%%%%%%%%%%%%%%%%%%%%%%%%%%%%%%%%%%%%%%%

%%%%%%%%%%%%%%%%%%%%%%%%%%%%%%%%%%%%%%%%%%%%%%%%%%%%%%%%%%
\subsection{}

Let ${}_{\K(0)}\M$ be the category with objects that are $\K(0)$-modules such that the eigenspaces of $d$ are finite dimensional; the morphisms are morphisms of $\K(0)$-modules. Another description of this category would be as the category of graded $\G$-modules with finite dimensional graded components. It is a semi-simple category and its simple objects are the irreducible highest weight modules with highest weight of the form $\l+k\d\in P_\K^+$; we denote such modules by $\V(\l+k\d)$. An alternative description of $\V(\l+k\d)$ would be as the graded $\G$-module whose unique non--zero  homogeneous component is $\V(\l)$ in degree $a_0k$. Note that $\V(\l+k\d)$ are projective objects in ${}_{\K(0)}\M$.

The Lie algebra homomorphism $\pi:\K\to \K(0)$ induces the pull-back functor
\begin{equation}
\pi^*: {}_{\K(0)}\M \to {}_{\K}\M.
\end{equation}
The map $\vartheta:\K(0)\to \K$ gives rise to  the usual induction and restriction functors
\begin{equation}
\ind_{\K(0)}^\K:  {}_{\K(0)}\M \to {}_{\K}\M, \quad  \res_{\K(0)}^\K: {}_{\K}\M \to {}_{\K(0)}\M ,
\end{equation}
where, $\ind_{\K(0)}^\K M=U(\K)\otimes_{U(\K(0))}M$. Recall that $\ind_{\K(0)}^\K$ is a left adjoint for $\res_{\K(0)}^\K$.

For all $\a\in\Rring$ let $x_{\a}\in\G_{\a}$  be such that the set  $\{x_\a~|~\a\in \Rring\}\cup\{\a_i^\vee~|~1\leq i\leq n\}$ is a Chevalley basis of $\G$.

\begin{Prop}\label{P2} Up to isomorphism, the following hold:
\begin{enumerate}[label={(\roman*)}]
\item $\pi^*\V(\l+k\d)$, $\l+k\d\in P_\K^+$, are the simple objects in ${}_{\K}\M$.
\item $\ind_{\K(0)}^{\K}\V(\l+k\d)$ is the projective cover of its unique simple quotient $\pi^*\V(\l+k\d)$.
\end{enumerate}
\end{Prop}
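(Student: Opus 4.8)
The plan is to exploit the $\Z_{\geq 0}$-grading of $\K$ with degree-zero part $\K(0)=\G\oplus\C d$ and ideal $\K_+$ of strictly positive components, using throughout that $\K_+$ strictly raises the $d$-degree while $\K(0)$ preserves it.

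For (i), I would first note that $\pi^*\V(\l+k\d)$ is simple: since $\K_+$ acts by zero under $\pi^*$, a $\K$-submodule is the same thing as a $\K(0)$-submodule, and $\V(\l+k\d)$ is simple in the semisimple category ${}_{\K(0)}\M$. For the converse, let $S$ be any simple object and pick a nonzero homogeneous $s$ of $d$-degree $m$. By PBW one has $U(\K)=U(\K_+)U(\K(0))$, and since $U(\K(0))$ preserves degree while $U(\K_+)$ raises it, the submodule $U(\K)s=S$ is concentrated in degrees $\geq m$; thus $S$ is bounded below, with some lowest degree $m_0$. Because $\K_+$ is an ideal, $\K_+S$ is a $\K$-submodule, and it lies in degrees $>m_0$, hence is proper and therefore zero by simplicity. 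So $\K_+$ annihilates $S$, which means $S=\pi^*\bar S$ for a simple object $\bar S$ of ${}_{\K(0)}\M$, i.e. $S\cong\pi^*\V(\l+k\d)$.

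For (ii), set $P=\ind_{\K(0)}^\K\V(\l+k\d)\cong U(\K_+)\otimes\V(\l+k\d)$ by PBW. Each $d$-degree component of $U(\K_+)$ is finite-dimensional (there are finitely many roots at a fixed degree, each with finite-dimensional root space) and $\V(\l+k\d)$ is finite-dimensional, so $P$ has finite-dimensional graded components and lies in ${}_\K\M$. Projectivity is then formal from the adjunction $\Hom_\K(P,-)\cong\Hom_{\K(0)}(\V(\l+k\d),\res_{\K(0)}^\K-)$: restriction is exact and $\V(\l+k\d)$ is projective in the semisimple category ${}_{\K(0)}\M$, so $\Hom_\K(P,-)$ is exact.

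It remains to identify the head and check essentiality. Here $P$ is generated by its lowest-degree part $V:=1\otimes\V(\l+k\d)\cong\V(\l+k\d)$, sitting in degree $a_0k$, because $U(\K(0))V=V$ and $U(\K_+)V=P$. If $N\subsetneq P$ is a submodule, then $N\cap V$ is a $\K(0)$-submodule of the simple $\K(0)$-module $V$; were it nonzero it would equal $V$, forcing $N\supseteq U(\K)V=P$, a contradiction, so $N$ meets $V$ trivially and lies in degrees $>a_0k$. Hence the sum of all proper submodules is again proper, yielding a unique maximal submodule $\rad P$ with $P/\rad P$ simple, generated in degree $a_0k$ and of $\G$-type $\V(\l)$; by (i) this head is $\pi^*\V(\l+k\d)$. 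Since every proper submodule is contained in $\rad P$, the kernel of the projection $P\twoheadrightarrow\pi^*\V(\l+k\d)$ is superfluous, so this map is a projective cover. I expect the main obstacle to be the degree bookkeeping that forces $\K_+$ to act trivially on simples and that pins down the head; once the grading is used systematically, the projectivity and essentiality are purely formal.
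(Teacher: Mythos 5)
Your proof is correct and follows essentially the same route as the paper's sketch: the $\Z_{\geq 0}$-grading forces $\K_+$ to annihilate simple objects (hence simples are pullbacks of simple $\K(0)$-modules), projectivity of $\ind_{\K(0)}^{\K}\V(\l+k\d)$ comes from the $\ind$--$\res$ adjunction with $\res$ exact, and generation of the induced module in its lowest degree yields both the superfluous radical and the uniqueness of the simple quotient. Your version merely fills in the degree bookkeeping that the paper compresses into its citation of Chari--Greenstein.
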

\begin{proof} This is part of  Proposition 1.3 and Proposition 2.1 in \cite{ChaGre}.  We briefly sketch the argument for the reader's convenience.

For the first part, we note that since $\K$ is graded by $\mathbb Z_{\ge 0}$, the action of $\K$ can only raise degree and therefore a non-homogeneous module has proper submodules. Since homogeneous objects are finite dimensional $\G$-modules  our claim follows.

For  the second part, $\ind_{\K(0)}^\K$ being a left adjoint is right exact and sends projective objects to projective objects. Therefore,  $\ind_{\K(0)}^{\K}\V(\l+k\d)$ is projective. The kernel of the epimorphism
\begin{equation}
\ind_{\K(0)}^{\K}\V(\l+k\d)\to \pi^*\V(\l+k\d)
\end{equation}
is $\K_+U(\K_+)\otimes \V(\l+k\d)$ which can be seen to be a superfluous submodule of $U(\K)\otimes_{U(\K(0))}\V(\l+k\d)$ because $1\otimes_{U(\K(0))}\V(\l+k\d)$ generates it. If
\begin{equation}
\ind_{\K(0)}^{\K}\V(\l+k\d)\to \pi^*\V(\mu+m\d),
\end{equation}
with $\mu+m\d\neq\l+k\d$ is another simple quotient then $1\otimes_{U(\K(0))}\V(\l+k\d)$ must be in the kernel so the quotient map is trivial.
\end{proof}

%%%%%%%%%%%%%%%%%%%%%%%%%%%%%%%%%%%%%%%%%%%%%%%%%%%%%%%%%%
\subsection{}
 For $\l+k\d\in P_\K^+$, let  $v_{\l+k\d}$ be a highest weight vector of $\V(\l+k\d)$. Then, the simple $\K(0)$-module $\V(\l+k\d)$ is generated by $v_{\l+k\d}$ with the relations
\begin{equation}
\begin{aligned}
&\N \cdot v_{\l+k\d}=0,\\
&h\cdot v_{\l+k\d}=(\l+k\d)(h)v_{\l+k\d},~h\in\H\oplus\C d,\\
&x_{-\a}^{(\l,\a^\vee)+1}\cdot v_{\l+k\d}=0, ~ \a\in\Rring^+.
\end{aligned}
\end{equation}

 Writing, $P(\l+k\d)=\ind_{\K(0)}^{\K}\V(\l+k\d)$ and $p_{\l+k\d}=1\otimes v_{\l+k\d}$, we observe that the  projective cover  $P(\l+k\d)$ is the $\K$-module generated by $p_{\l+k\d}$ with the same relations.

%%%%%%%%%%%%%%%%%%%%%%%%%%%%%%%%%%%%%%%%%%%%%%%%%%%%%%%%%%
\subsection{} Let  $\leq$ be the dominance partial order on $\P^+$: $\mu\le\lambda$ if and only if $\lambda-\mu\in {\Qbar^+}$. Following \cite{CFK}, we define for $\l+k\d\in P_\K^+$, the global Weyl module $W(\l+k\delta)$ as
\begin{equation}
 W(\l+k\d)=\frac{P(\l+k\d)}{\sum_{\mu\not\leq\l, s\in a_0^{-1}\Z}U(\K)P(\l+k\d)_{\mu+s\d}}.
\end{equation}

\begin{Prop}\label{P3} For $\l+k\d\in  P_\K^+$, we have
\begin{enumerate}[label={(\roman*)}]
\item $W(\l+k\d)$ is the maximal quotient of $P(\l+k\d)$ such that $\wt(W(\l+k\d))\subseteq {\wtbar}(\V(\l))$.
\item $W(\l+k\d)$ is the maximal quotient of $P(\l+k\d)$ such that $\wt(W(\l+k\d))\cap \{\mu~|~\l<\mu\}=\emptyset$.
\item $W(\l+k\d)$ is the $\K$-module generated by an element $w_{\l+k\d}$ with the relations
\begin{equation}\label{gWeylrels}
\begin{aligned}
&\Naff_\K^{re} \cdot w_{\l+k\d}=0,\\
&h\cdot w_{\l+k\d}=(\l+k\d)(h)w_{\l+k\d},~h\in\H\oplus\C d,\\
&x_{-\a}^{(\l,\a^\vee)+1}\cdot w_{\l+k\d}=0, ~ \a\in\Rring^+.
\end{aligned}
\end{equation}
\item $\dim_\C (W(\l+k\d)_{\l+k\d})=1$ and  $W(\l+k\d)$ is indecomposable with $\pi^*\V(\l+k\d)$ as its unique simple quotient.
\end{enumerate}
\end{Prop}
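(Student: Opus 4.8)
The plan is to prove the four assertions by first constructing $W(\l+k\d)$ via generators and relations and then identifying this presentation with the quotient in the definition. I would start by letting $\Wm$ denote the $\K$-module generated by a single vector $w_{\l+k\d}$ subject to the relations \eqref{gWeylrels}, and set out to prove (iii) by showing $\Wm\cong W(\l+k\d)$. The key observation is that $P(\l+k\d)$ is generated by $p_{\l+k\d}$ with the relations recorded just before this proposition (namely $\N\cdot p_{\l+k\d}=0$, the $\H\oplus\C d$-weight relations, and $x_{-\a}^{(\l,\a^\vee)+1}\cdot p_{\l+k\d}=0$). The relations defining $\Wm$ are obtained from these by replacing $\N$ with the strictly larger $\Naff_\K^{re}$. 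Hence $\Wm$ is a quotient of $P(\l+k\d)$, and I must show the additional relations $\Naff_\K^{re}\cdot w_{\l+k\d}=0$ cut out exactly the submodule $\sum_{\mu\not\leq\l,\,s}U(\K)P(\l+k\d)_{\mu+s\d}$.

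For this identification I would argue in two directions. First, I would verify that the extra annihilation relations force every weight of $\Wm$ to satisfy $\mu\leq\l$: since $\Naff_\K^{re}$ together with $\H\oplus\C d$ and the Chevalley lowering operators $x_{-\a}$ generate $\K$, any weight vector in $\Wm$ is obtained from $w_{\l+k\d}$ by applying elements of $\bar\Naff_\K$ and the degree-zero part, which can only lower the $\H^*$-component by elements of $\Qbar^+$; this gives $\wt(\Wm)\subseteq\wtbar(\V(\l))$ and in particular kills all $P(\l+k\d)_{\mu+s\d}$ with $\mu\not\leq\l$. Conversely, I would check that modulo the submodule $\sum_{\mu\not\leq\l,\,s}U(\K)P(\l+k\d)_{\mu+s\d}$ the relation $\Naff_\K^{re}\cdot\bar p_{\l+k\d}=0$ automatically holds: each positive real affine root $\a\in R_\K^{re,+}$ restricts to a nonzero element $\bar\a$ of $\Qbar^+$ (because $\a_{|\H}\in\Rring^+$ up to the $A_{2n}^{(2)}$ adjustment built into $R_\K^{re,+}$), so $\Gaff_\a\cdot p_{\l+k\d}$ has $\H^*$-weight $\l+\bar\a\not\leq\l$ and lies in the quotiented submodule. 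This establishes $\Wm\cong W(\l+k\d)$ and simultaneously yields (i), since the displayed weight bound is exactly the defining property of the quotient. Assertion (ii) is then immediate from (i), because $\wtbar(\V(\l))=\W(\{\mu\in\P^+:\l-\mu\in\Qbar^+\})$ contains no weight $\mu$ with $\l<\mu$, and conversely excluding all such $\mu$ forces weights into $\wtbar(\V(\l))$; I would spell out this equivalence using the description of $\wtbar(\V(\l))$ recorded earlier.

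For (iv), the one-dimensionality of the top weight space $W(\l+k\d)_{\l+k\d}$ follows because any vector of that weight is a scalar multiple of $w_{\l+k\d}$: lowering operators strictly decrease the weight, so the $\l+k\d$ weight space is spanned by $w_{\l+k\d}$ alone. Indecomposability and the identification of the unique simple quotient then follow from Proposition \ref{P2}: $W(\l+k\d)$ is a quotient of the projective cover $P(\l+k\d)=\ind_{\K(0)}^\K\V(\l+k\d)$, hence has a unique maximal submodule with simple quotient $\pi^*\V(\l+k\d)$, and any nontrivial direct sum decomposition would contradict the one-dimensionality of the generating weight space. I expect the main obstacle to be the careful bookkeeping for type $A_{2n}^{(2)}$, where $\Qbar$ properly contains $\Q$ and $\Rbar$ is the non-reduced system $BC_n$: one must confirm that the restriction of each root in $R_\K^{re,+}$ genuinely lands in $\Qbar^+$ rather than merely $\Q^+$, which is precisely why $\Qbar$ and $\wtbar$ (rather than $\Q$ and $\wt$) appear throughout the statement.
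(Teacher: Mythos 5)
Your strategy --- realizing $W(\l+k\d)$ by the generators and relations \eqref{gWeylrels}, matching that presentation against the defining quotient via the PBW theorem, and then deducing (iv) from the presentation together with Proposition \ref{P2}(ii) --- is the same route the paper takes (it outsources the details to \cite{CFK}*{Proposition 4} and \cite{BBCKL}*{Proposition 3.3, Lemma 3.7}), and your two key computations are the right ones: the PBW cone bound in one direction, and in the other the observation that each $\a\in R_\K^{re,+}$ restricts to a nonzero element of $\Qbar^+$, so that $\Gaff_\a\cdot p_{\l+k\d}$ lies in a weight space indexed by a weight $\mu\not\leq\l$ and hence in the quotiented submodule. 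This correctly gives (iii), and (iv) is fine. But (i) has a genuine gap: you pass from ``all weights of $\Wm$ lie in $\l-\Qbar^+$'' to ``$\wt(\Wm)\subseteq\wtbar(\V(\l))$'' as if these were the same condition, and they are not: $\wtbar(\V(\l))$ is a \emph{finite}, $\W$-stable set, while $\l-\Qbar^+$ is an infinite cone. Note that your argument never uses the relations $x_{-\a}^{(\l,\a^\vee)+1}\cdot w_{\l+k\d}=0$, and without them the containment is simply false: the module presented by only the first two families of relations contains the nonzero vectors $x_{-\th}^N\cdot w_{\l+k\d}$ of weight $\l-N\th$ for every $N\geq 0$. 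The missing step is integrability: the third family of relations forces $U(\G)\cdot w_{\l+k\d}\cong\V(\l)$, which is finite dimensional; since $\K$ is a sum of finite-dimensional $\G$-modules under the adjoint action, $\Wm=U(\K)\cdot U(\G)w_{\l+k\d}$ is a sum of finite-dimensional $\G$-modules, so $\wt(\Wm)$ is $\W$-stable, and a $\W$-stable subset of $\l-\Qbar^+$ lies in $\W(\{\mu\in\P^+~|~\mu\leq\l\})=\wtbar(\V(\l))$. (A minor related imprecision: the degree-zero part $\G\oplus\C d$ contains the raising operators $\N$, so it does not ``only lower'' weights; the clean statement is the triangular decomposition $\K=\bar\Naff_\K\oplus(\H\oplus\C d\oplus\Naff^{im})\oplus\Naff_\K^{re}$ with the factor $U(\Naff_\K^{re})$ placed so that it acts first on $w_{\l+k\d}$.)

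Second, (ii) is not ``immediate from (i)'' by comparing weight sets. Dominance is only a partial order, so a weight set can avoid $\{\mu~|~\l<\mu\}$ while containing weights incomparable to $\l$, hence fail to be contained in $\wtbar(\V(\l))$; the assertion ``excluding all such $\mu$ forces weights into $\wtbar(\V(\l))$'' is false as a combinatorial statement about weight sets, and proving that it nevertheless holds for quotients of $P(\l+k\d)$ is exactly the content of (ii). The correct argument reuses your key observation, but applied to an \emph{arbitrary} quotient: if $M$ is any quotient of $P(\l+k\d)$ with $\wt(M)\cap\{\mu~|~\l<\mu\}=\emptyset$, then for $\a\in R_\K^{re,+}$ the vector $\Gaff_\a\cdot \bar p_{\l+k\d}$ has weight $\l+\a_{|\H}>\l$ and so must vanish; thus the image of $p_{\l+k\d}$ in $M$ satisfies all of \eqref{gWeylrels}, so $M$ is a quotient of $\Wm\cong W(\l+k\d)$. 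This gives the maximality in (ii) directly, and the containment $\wt(M)\subseteq\wtbar(\V(\l))$ then follows from the repaired part (i). With these two repairs your proof is complete and coincides with the argument the paper cites.
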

\begin{proof} The first three statements are consequences of the PBW theorem; see, for example, \cite{CFK}*{Proposition 4} and \cite{BBCKL}*{Proposition 3.3, Lemma 3.7} for details. Part (iii) follows from part (ii) and Proposition \ref{P2} (ii).
\end{proof}
{As a $\CG$-module, the global Weyl module $W(\l+k\d)$ is the $\CG$-module generated by the element $w_{\l+k\d}$ with the relations
\begin{equation}\label{CG-gWeylrels}
\begin{aligned}
&\Naff_\K^{re} \cdot w_{\l+k\d}=0,\\
&h\cdot w_{\l+k\d}=\l(h)w_{\l+k\d},~h\in\H,\\
&x_{-\a}^{(\l,\a^\vee)+1}\cdot w_{\l+k\d}=0, ~ \a\in\Rring^+.
\end{aligned}
\end{equation}
Remark that, as $\CG$-modules, the global Weyl modules for fixed $\l$ are isomorphic, their distinguishing property as $\K$-modules being their graded structure.}

The global Weyl modules were originally introduced by generators and relations for the untwisted affine Lie algebras in  \cite{ChaPre} but, as pointed out in \cite{ChaLok}, the definition can be made for current algebras and all the results go over to the case of the current algebras. The theory of global Weyl modules for the current algebras associated to twisted affine Lie algebras is developed in \cite{CIK}.  
%%%%%%%%%%%%%%%%%%%%%%%%%%%%%%%%%%%%%%%%%%%%%%%%%%%%%%%%%%
\subsection{} Consider the commutative algebra
\begin{equation}
\A=U(\H\oplus\Naff^{im})=U\left(\bigoplus_{k\in\Z_{\geq 0}}\Gaff_{k\d}\right).
\end{equation}
The scaling element $d$ normalizes $\H\oplus\Naff^{im}$ and therefore endows $\A$ with a canonical $\Z_{\geq 0}$-grading.

Since $\A$ is a subalgebra of $U({\CG})$, the enveloping algebra $U({\CG})$ acquires a right $\A$-module structure. Keeping in mind that  the global Weyl module is a cyclic $U({\CG})$-module, one can see that the left ideal generated by the relations \eqref{CG-gWeylrels} is also a right $\A$-submodule (see \cite{CFK}*{\S 3.4} for details). The annihilator of $w_{\l+k\d}$ in $\A$ is a graded ideal of $\A$, independent of $k$. With the notation
\begin{equation}
\A_{\l}=\A/\Ann_\A(w_{\l}),
\end{equation}
we see that the global Weyl module $W(\l+k\d)$ is a $U({\CG})-\A_\l$-bimodule. {The graded structure of $W(\l+k\d)$ is compatible with the grading of $\A$ and therefore $W(\l+k\d)$ is a $U(\K)-\A_\l$-bimodule.}

\begin{Thm}\label{T1} For $\l+k\d\in P_\K^+$, the following hold
\begin{enumerate}[label={(\roman*)}]
\item The map $$\A_\l\to \bigoplus_{j\in\Z}W(\l+k\d)_{\l+j\d}=\bigoplus_{j\geq k}W(\l+k\d)_{\l+j\d}, \  a\mapsto w_{\l+k\d}\cdot a$$ is a linear isomorphism.
\item $W(\l+k\d)$ is a finitely generated $\A_\l$-module.
\item The algebra $\A_\l$ is isomorphic as a graded algebra to the polynomial $\C$-algebra in variables $T_{i,r}$, $1\le i\le n$, $1\le r\le(\lambda,\alpha_i^\vee)$, where  $T_{i,r}$ has degree $a_0r_{\alpha_i}$ if $i<n$ and degree $r_{\alpha_n}$ if $i=n$.
\end{enumerate}
\end{Thm}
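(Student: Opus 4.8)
The plan is to treat the three parts in turn; part (iii), the identification of $\A_\l$, is the technical heart, and part (ii) will rest on the same rank-one computation.

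For part (i), injectivity is immediate from the definitions: since $\A_\l=\A/\Ann_\A(w_\l)$ and this annihilator is independent of $k$, the kernel of $a\mapsto w_{\l+k\d}\cdot a$ is exactly $\Ann_\A(w_\l)$, so the map descends to an injection of $\A_\l$. For surjectivity I would invoke the PBW decomposition $U(\CG)=U(\bar\Naff_\K)\,U(\H\oplus\Naff^{im})\,U(\Naff_\K^{re})$ arising from $\CG=\bar\Naff_\K\oplus\H\oplus\Naff^{im}\oplus\Naff_\K^{re}$. Because $\Naff_\K^{re}\cdot w_{\l+k\d}=0$ by \eqref{gWeylrels} and $U(\H\oplus\Naff^{im})=\A$, this yields $W(\l+k\d)=U(\bar\Naff_\K)\cdot\A\cdot w_{\l+k\d}$. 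The point is that, by inspection of $R_\K^-$, every root appearing in $\bar\Naff_\K$ restricts on $\H$ to a nonzero element of $-\Qbar^+$, so any PBW monomial containing at least one factor from $\bar\Naff_\K$ lowers the $\H$-weight strictly below $\l$. Hence the subspace of $\H$-weight $\l$, namely $\bigoplus_j W(\l+k\d)_{\l+j\d}$, coincides with $\A\cdot w_{\l+k\d}$, which is the desired surjectivity; the restriction to $j\ge k$ is forced by the grading being bounded below by $\deg w_{\l+k\d}$.

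For part (iii) I would first record that $\H\oplus\Naff^{im}$ is abelian — the null root vanishes on $\H$, and the positive imaginary root spaces commute with one another once the center has been quotiented out — so $\A=S(\H\oplus\Naff^{im})$ is already a polynomial algebra, and the task is to identify the quotient $\A_\l$. I would compute $\Ann_\A(w_\l)$ one simple root at a time, working inside the affine $\mathfrak{sl}_2$-type subalgebra $\Gaff^{(i)}$ spanned by $x_{\pm\a_i}$, $\a_i^\vee$ and the graded pieces $\Gaff_{\pm\a_i+k\d}$, $\Gaff_{k\d}$ in the $\a_i$-direction. Garland-type identities (as in \cite{ChaPre}, \cite{CFK}) relate the action of products of raising loop generators and the divided powers $x_{-\a_i}^{(\l,\a_i^\vee)+1}$ to the generating series of the $\Gaff_{k\d}$-components along $\a_i^\vee$; the relation $x_{-\a_i}^{(\l,\a_i^\vee)+1}w_\l=0$ from \eqref{gWeylrels} then forces all components of degree exceeding $(\l,\a_i^\vee)$ to annihilate $w_\l$. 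This produces a surjection onto $\A_\l$ from a polynomial ring in variables $T_{i,r}$, $1\le r\le(\l,\a_i^\vee)$, the $T_{i,r}$ playing the role of elementary symmetric functions in $(\l,\a_i^\vee)$ variables.

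To promote this surjection to an isomorphism I would prove algebraic independence of the $T_{i,r}$ and the decoupling of distinct nodes, i.e. $\A_\l\cong\bigotimes_{i=1}^n\A_\l^{(i)}$ with each factor a polynomial ring; geometrically this realizes $\Spec\A_\l$ as the configuration space $\prod_i\mathrm{Sym}^{(\l,\a_i^\vee)}\AA^1$, whose coordinate ring is manifestly free, and the independence can be exhibited by letting the $T_{i,r}$ act as coordinates on a suitable family of finite-dimensional quotients. The degree assertion then follows by tracking $d$-eigenvalues: each $\Gaff_{k\d}$ sits in $d$-degree $a_0k$, and after incorporating the root-length normalization $r_{\a_i}$ one reads off the stated degrees, the node $i=n$ in type $A_{2n}^{(2)}$ being exceptional since there $a_0=2$ and $\a_n$ carries the hyperspecial (short, $BC_n$) structure. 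Finally, part (ii) rests on the same mechanism: by (i) and $W(\l+k\d)=U(\bar\Naff_\K)\A w_\l$ the $\G$-weights lie in the finite set $\wtbar(\V(\l))$, and the Garland identities rewrite any negative loop generator from $\Gaff_{-\a+j\d}$ with $j\ge(\l,\a^\vee)$, acting on $w_\l$, as an $\A$-combination of strictly lower $t$-degree terms; monomials with bounded exponents and bounded length therefore generate $W(\l+k\d)$ over $\A$, and being finite in number they exhibit $W(\l+k\d)$ as a finitely generated $\A_\l$-module.

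The main obstacle I anticipate is the uniformity of the rank-one computation in part (iii) across all affine types. For untwisted and most twisted cases the Garland identities and the symmetric-function identification follow the pattern of \cite{ChaPre}, \cite{CFK}, but the node $i=n$ in type $A_{2n}^{(2)}$ — governed by the non-reduced root system $BC_n$ and the hyperspecial rather than special parabolic — requires the separate $A_2^{(2)}$ analysis of \cite{CIK} and accounts for the anomalous degree $r_{\a_n}$. The other delicate point is establishing algebraic independence (the lower bound on $\A_\l$) rather than merely a generating set, since $\A$ is not Noetherian and no candidate relation may be discarded without an explicit module on which the $T_{i,r}$ act freely.
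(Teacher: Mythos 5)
Your outline is mathematically sound, but note how it relates to the paper's actual proof, which contains essentially no argument: part (i) is declared immediate from the definition of $\A_\l$, and parts (ii) and (iii) are quoted from \cite{CFK}*{Theorems 2 and 4} for $\Gaff$ untwisted and from \cite{CIK}*{Theorem 1, Proposition 6.3, Theorem 8} for $\Gaff$ twisted. Your part (i) --- injectivity from $\A_\l=\A/\Ann_\A(w_\l)$, surjectivity from the PBW factorization $U(\CG)=U(\bar\Naff_\K)\,U(\H\oplus\Naff^{im})\,U(\Naff^{re}_\K)$ together with the fact that every root of $\bar\Naff_\K$ restricts to a nonzero element of $-\Qbar^+$ --- is precisely the fleshed-out version of the paper's ``immediate'', and it is correct. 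For (ii) and (iii) your sketch reconstructs the strategy used in the references the paper cites: rank-one reduction, Garland-type identities, identification of the generators with elementary symmetric functions, and algebraic independence exhibited on a family of finite-dimensional quotients; you also correctly isolate the two genuinely delicate points, namely the lower bound (independence rather than mere generation) and the node $i=n$ in type $A_{2n}^{(2)}$. The trade-off between the two routes: the citation route is complete, since those theorems are proved in the literature in exactly this generality, hyperspecial case included; your route is self-contained in structure, but the steps you defer --- the Garland identities in all twisted types and the freeness of the symmetric-function algebra --- are exactly the content of the cited theorems, and your sketch itself falls back on \cite{ChaPre}, \cite{CFK}, \cite{CIK} for them, so in substance it is the same proof with the black boxes opened part-way.

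One point to tighten if you were to develop the sketch fully: tracking $d$-eigenvalues honestly gives the generator $T_{i,r}$, realized as the $r$-th elementary symmetric function in loop variables of $d$-degree $a_0r_{\a_i}$ (resp.\ $r_{\a_n}$ for $i=n$), the degree $ra_0r_{\a_i}$ (resp.\ $rr_{\a_n}$) --- growing linearly in $r$, not constant in $r$ as your phrase ``one reads off the stated degrees'' glosses. This linear growth is what forces the Hilbert series of $\A_\l$ to equal $\|P_{w_\circ(\l)}(q)\|_q^{-2}$, which is exactly how Theorem \ref{T1}(iii) is used in Corollary \ref{P4iii} and Proposition \ref{P8}(ii); any reading of the degree assertion that drops the factor of $r$ would be inconsistent with those later statements.
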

\begin{proof} Part (i) is immediate from the definition of $\A_\l$. Parts (ii) and (iii) were proved in  \cite{CFK}*{Theorem 2} and \cite{CFK}*{Theorem 4} for $\Gaff$ untwisted. The results were proved in  
\cite{CIK}*{Theorem 1, Proposition 6.3, Theorem 8} for $\Gaff$ twisted. 
\end{proof}
%%%%%%%%%%%%%%%%%%%%%%%%%%%%%%%%%%%%%%%%%%%%
\subsection{} The local Weyl modules  are defined as
\begin{equation}
\Wloc(\l+k\d,\m)= W(\l+k\d)\otimes_{\A_\l}(\A_\l/\m),
\end{equation}
 where $\m$ is a maximal ideal of $\A_\l$. Using Theorem \ref{T1}(ii), we see that the local Weyl modules are finite-dimensional.  Their  importance in the study of the  category of finite dimensional $U(\K)$-modules arises from  the fact that any finite-dimensional cyclic highest weight $U(\K)$-module generated by a one-dimensional $\l$-weight space is a quotient of $\Wloc(\l,\m)$  for some maximal ideal $\m$ of $\A$ (see \cite{ChaPre}*{Proposition 2.1} and \cite{CFS}*{Lemma 2.5}). The local Weyl module corresponding to the unique maximal homogeneous ideal of $\A_\l$ is an object of ${}_{\K}\M$; we denote this module by $\Wloc(\l+k\d)$.

\begin{Prop}\label{P4} For $\l+k\d\in P_\K^+$, the following hold
\begin{enumerate}[label={(\roman*)}]
\item  $\Wloc(\l+k\d)$ is the maximal quotient of $P(\l+k\d)$ such that $\wt(\Wloc(\l+k\d)\subseteq {\wtbar}(\V(\l))$ and $\dim_\C \Wloc(\l+k\d)_{\l+j\d})=\delta_{j,k}$ for $j\in a_0^{-1}\Z$.
\item  $\Wloc(\l+k\d)$ is a finite dimensional, indecomposable object of ${}_{\K}\M$ with unique simple quotient  $\pi^*\V(\l+k\d)$.
\item  $\Wloc(\l+k\d)^*$ has simple socle  $\soc(\Wloc(\l+k\d)^*)=(\pi^*\V(\l+k\d))^*$.
\item $\Wloc(\l+k\d)$ is the $\K$-module generated by an element $u_{\l+k\d}$ with the relations
\begin{equation}\label{lWeylrels}
\begin{aligned}
&\Naff_\K \cdot u_{\l+k\d}=0,\\
&h\cdot u_{\l+k\d}=(\l+k\d)(h)u_{\l+k\d},~h\in\H\oplus\C d,\\
&x_{-\a}^{(\l,\a^\vee)+1}\cdot u_{\l+k\d}=0, ~ \a\in\Rring^+ .
\end{aligned}
\end{equation}
%\item $\Wloc(\l+k\d)\otimes\A_\l$ and $W(\l+k\d)$ are isomorphic as graded $U(\K(0))-\A_\l$-bimodules.
\end{enumerate}
\end{Prop}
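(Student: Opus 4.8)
The plan is to read off all four parts from Proposition \ref{P3} and Theorem \ref{T1}, working with the description $\Wloc(\l+k\d)=W(\l+k\d)/W(\l+k\d)\m$, where $\m=\A_\l^+$ is the augmentation ideal of the non-negatively graded polynomial algebra $\A_\l$ (its unique maximal homogeneous ideal, with $\A_\l/\m=\C$); in particular $\Wloc(\l+k\d)$ is a graded quotient of $P(\l+k\d)$ via $P(\l+k\d)\twoheadrightarrow W(\l+k\d)$. Two facts will be used throughout: the right $\A_\l$-action preserves $\H$-weights and raises the $d$-degree (since $\Naff^{im}$ is $\H$-weight zero), so the weight-space decomposition of $W(\l+k\d)$ is one of right $\A_\l$-modules; and, by Theorem \ref{T1}(i), its top piece $\bigoplus_j W(\l+k\d)_{\l+j\d}$ is free of rank one over $\A_\l$ through $a\mapsto w_{\l+k\d}\cdot a$.

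For (i), the inclusion $\wt(\Wloc(\l+k\d))\subseteq\wtbar(\V(\l))$ is inherited from $W(\l+k\d)$ by Proposition \ref{P3}(i). Because $-\otimes_{\A_\l}(\A_\l/\m)$ distributes over the weight decomposition, the top component becomes $\A_\l\otimes_{\A_\l}\C=\C$ in degree $k$, giving $\dim_\C\Wloc(\l+k\d)_{\l+j\d}=\delta_{j,k}$. For maximality, any quotient $M$ of $P(\l+k\d)$ with $\wt(M)\subseteq\wtbar(\V(\l))$ is already a quotient of $W(\l+k\d)$ by Proposition \ref{P3}(i); the normalization $\dim_\C M_{\l+j\d}=\delta_{j,k}$ then forces the rank-one $\A_\l$-module $\bigoplus_j M_{\l+j\d}$ to be the one-dimensional quotient $\A_\l/\m$, so the kernel of $W(\l+k\d)\twoheadrightarrow M$ contains $w_{\l+k\d}\cdot\m$ and hence the $U(\K)$-submodule $W(\l+k\d)\m=U(\K)(w_{\l+k\d}\cdot\m)$; thus $M$ factors through $\Wloc(\l+k\d)$.

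Part (ii) follows at once. Finite-dimensionality is Theorem \ref{T1}(ii): a finitely generated $\A_\l$-module becomes finite-dimensional after $-\otimes_{\A_\l}\C$. As a nonzero quotient of $W(\l+k\d)$, the module $\Wloc(\l+k\d)$ has $\pi^*\V(\l+k\d)$ as its only simple quotient by Proposition \ref{P3}(iv); since $\dim_\C\Wloc(\l+k\d)_{\l+k\d}=1$ this quotient occurs once, so the head is simple and the module is therefore indecomposable, a finite-dimensional module with simple head having a unique maximal submodule. For (iii), the duality $*$ is an exact contravariant self-equivalence of the finite-dimensional part of ${}_{\K}\M$ that preserves simples, hence exchanges head and socle; applied to (ii) it gives $\soc(\Wloc(\l+k\d)^*)=(\head\Wloc(\l+k\d))^*=(\pi^*\V(\l+k\d))^*$.

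The substance lies in (iv), which is equivalent to the identity $W(\l+k\d)\m=U(\K)\Naff^{im}w_{\l+k\d}$: granting it, $\Wloc(\l+k\d)$ is presented by the global relations \eqref{gWeylrels} together with $\Naff^{im}u_{\l+k\d}=0$, and since $\Naff_\K=\Naff_\K^{re}\oplus\Naff^{im}$ this is exactly \eqref{lWeylrels}. The inclusion $\supseteq$ is immediate: for $y\in\Naff^{im}$ the bimodule structure gives $y\,w_{\l+k\d}=w_{\l+k\d}\cdot\bar y\in w_{\l+k\d}\cdot\m\subseteq W(\l+k\d)\m$, and $W(\l+k\d)\m$ is a $U(\K)$-submodule. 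For $\subseteq$, cyclicity and the bimodule property give $W(\l+k\d)\m=U(\K)(w_{\l+k\d}\cdot\m)$; since $\A=U(\H\oplus\Naff^{im})$ and $\H$ maps to scalars in $\A_\l$, the algebra $\A_\l$ is generated by the image of $\Naff^{im}$, so $\m$ is spanned by monomials $\bar y_1\cdots\bar y_m$ with $m\ge 1$ and $y_i\in\Naff^{im}$, and iterating $w_{\l+k\d}\cdot\bar y=y\,w_{\l+k\d}$ yields $w_{\l+k\d}\cdot(\bar y_1\cdots\bar y_m)=y_1\cdots y_m\,w_{\l+k\d}\in U(\K)\Naff^{im}w_{\l+k\d}$. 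I expect this identification of the right action of the augmentation ideal with the left action of $\Naff^{im}$ to be the main obstacle; it is the commutativity of $\A_\l$ (equivalently of $\H\oplus\Naff^{im}$) that lets the argument close without an induction on commutators.
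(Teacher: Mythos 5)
Your proof is correct and follows exactly the route the paper intends: the paper's own proof is the one-line remark that the proposition is immediate from Theorem \ref{T1} and Proposition \ref{P3}, and your argument is precisely a careful unwinding of those two results (the free rank-one $\A_\l$-structure on the top weight space, the characterizations of $W(\l+k\d)$, and the identity $W(\l+k\d)\m=U(\K)\Naff^{im}w_{\l+k\d}$ coming from the bimodule structure), together with the standard head--socle exchange under the duality $*$. Nothing in your write-up deviates from or adds to the paper's approach beyond supplying the details it omits.
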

\begin{proof} The proof  is immediate from Theorem \ref{T1} and Proposition \ref{P3}.
\end{proof}
%%%%%%%%%%%%%%%%%%%%%%%%%%%%%%%%%%%%%%%%%%%%%%%%%%%%%%%%%%%%%%%%%%%%%%%%%%%%

\subsection{}  The Demazure module $D(\l+s\a_0^{-1}\d)$, which usually is only a $\Baff$-module is a $\K$--module if $\l$ is an anti-dominant weight. For $\l\in\P$ we denote by $\tilde\l$ the representative  in $\O_{\P}$ of the level one $W$ orbit of $\l$.

A fundamental result on local Weyl modules is their connection with Demazure modules. The following result was proved in \cite{ChaLok}*{Corollary 1.5.1} for $\Gaff$ of type $A_{n}^{(1)}$, in \cite{FouLit}*{Theorem A} for $\Gaff$ untwisted of type I, and in \cite{FouKus}*{Theorem 5.0.2} for $\Gaff$ twisted of type I but not of type $A_{2n}^{(2)}$ and in \cite{CIK}*{Theorem 2} for $A_{2n}^{(2)}$. %all twisted types.

\begin{Thm}\label{T2}
Let $\Gaff$ be an affine Lie algebra of type I and let $\l+k\d\in P_\K^+$. The local Weyl module $\Wloc(\l+k\d)$ and the Demazure module $D(w_\circ(\l)+k\d)$   are isomorphic as $\K$-modules.
\end{Thm}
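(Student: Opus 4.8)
The plan is to construct a surjection $\Wloc(\l+k\d)\twoheadrightarrow D(w_\circ(\l)+k\d)$ out of the universal property of the local Weyl module, and then to upgrade it to an isomorphism by matching graded $\G$-characters. Throughout, the abbreviated notation $D(w_\circ(\l)+k\d)$ stands for $D(w_\circ(\l)+k\d+\L_0)$; since $\l\in\P^+$ forces $w_\circ(\l)$ to be anti-dominant, this Demazure module is genuinely a $\K$-module, and being a level one Demazure module of type I it is finite dimensional.

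First I would exhibit a cyclic $\K$-generator and verify the defining relations. The module $D(w_\circ(\l)+k\d)$ sits inside the level one integrable module $V(\tilde\l+\L_0)$ (up to a degree shift), where $\tilde\l\in\O_{\P}$ is the level one representative of $\l$. The vector $v$ of weight $\l+k\d$ is an extremal vector of this integrable module, hence spans a one-dimensional weight space; applying the finite nilradical $\N\subset\K$ to the extremal vector of weight $w_\circ(\l)+k\d$ recovers the top $\G$-constituent $\V(\l)$, so $v$ lies in $D(w_\circ(\l)+k\d)$ and generates it over $\K$. I would then verify the relations \eqref{lWeylrels} for $v$: the Cartan eigenvalue condition is immediate, while the annihilation $\Naff_\K\cdot v=0$ and the relations $x_{-\a}^{(\l,\a^\vee)+1}\cdot v=0$ follow from the extremality of $v$ and the integrability of the ambient module (the latter being the local $\mathfrak{sl}_2$-relations). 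By Proposition \ref{P4}(iv) this yields the surjection, whence $\dim_\C\Wloc(\l+k\d)\geq\dim_\C D(w_\circ(\l)+k\d)$.

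The substance of the proof is the reverse inequality. Here I would bound the graded character of $\Wloc(\l+k\d)$ from above by a Poincar\'e--Birkhoff--Witt spanning argument: after fixing an ordering of the root vectors of $\bar\Naff_\K\oplus\Naff^{im}\oplus\Naff^{re}_\K$, one uses the relations \eqref{lWeylrels} together with the \emph{derived} higher-degree relations they force — relations bounding the powers of the affine real root vectors in $\Naff^{re}_\K$ that can act nontrivially, whose exponents are governed by $(\l,\a^\vee)$ and by the normalization $r_\a$ — to cut the naive PBW monomials down to a spanning set whose cardinality is exactly $\dim_\C D(w_\circ(\l)+k\d)$ as computed from the Demazure character formula. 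Matching the two counts forces the surjection to be an isomorphism.

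I expect this last step to be the main obstacle, and it is precisely the part that is type-sensitive, which is why the statement is assembled from the several cited references rather than proved by a single uniform count. The hypothesis of type I is essential: for such $\Gaff$ the relevant level one Demazure module is a single module, controlled by $M=\Q$ (respectively $M=\P$ for $A_{2n}^{(2)}$) and by the minuscule structure of $\O_{\P}$, so the relations \eqref{lWeylrels} already generate all the Demazure relations; for type II the local Weyl module is instead a fusion product of several Demazure modules. Deriving and counting the higher-degree relations requires careful bookkeeping of the three possible root lengths and of $r_\a$ (visible in the grading of Theorem \ref{T1}(iii)), and the twisted cases, most notably $A_{2n}^{(2)}$ with $\Rbar=BC_n$, demand the finer analysis carried out in the cited works.
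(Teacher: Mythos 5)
The first half of your proposal is sound, and it is the easy half: since $w_\circ(\l)$ is anti-dominant, $D(w_\circ(\l)+k\d)$ is indeed a $\K$-module, the extremal vector $v$ of weight $\l+k\d$ generates it over $\K$, and extremality together with integrability of the ambient level one module give the relations \eqref{lWeylrels}, so Proposition \ref{P4}(iv) produces the surjection $\Wloc(\l+k\d)\twoheadrightarrow D(w_\circ(\l)+k\d)$. Be aware, though, that the paper itself does not prove Theorem \ref{T2} at all: it is imported wholesale from \cite{ChaLok} (type $A_n^{(1)}$), \cite{FouLit} (untwisted type I), \cite{FouKus} (twisted type I except $A_{2n}^{(2)}$) and \cite{CIK} ($A_{2n}^{(2)}$). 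So the comparison has to be with those references, not with anything in this paper.

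The genuine gap is your second step. Saying that the relations \eqref{lWeylrels} ``together with the derived higher-degree relations they force'' cut the PBW monomials down to a spanning set of cardinality $\dim_\C D(w_\circ(\l)+k\d)$ is not an argument; it is a restatement of the inequality $\dim_\C \Wloc(\l+k\d)\le \dim_\C D(w_\circ(\l)+k\d)$, which is the entire content of the theorem. Two things are missing, and both are hard: (a) establishing that the higher-degree relations (bounded powers, acting on $u_{\l+k\d}$, of the root vectors attached to the affine roots $-\a+s\d$) actually hold in $\Wloc(\l+k\d)$ --- this amounts to showing that the level one Demazure module admits a presentation by the Weyl relations alone, which is itself a theorem in the spirit of the Joseph/Mathieu/Polo presentations and not a formal consequence of PBW; and (b) the count itself, which in none of the cited references is a direct monomial count: Chari--Loktev construct explicit bases in type $A$, Fourier--Littelmann use fusion products and limit/tensor-product constructions of Demazure modules together with crystal combinatorics, and \cite{FouKus}, \cite{CIK} adapt these techniques to the twisted cases (where, as you note, $\Rbar=BC_n$ causes additional complications). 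Your outline has the correct two-step shape --- which is exactly how the cited proofs are organized --- but the step carrying all the mathematical weight is left as a black box, and it is precisely the step that took several papers, with genuinely different methods in different types, to settle.
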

\begin{Rem} Theorem \ref{T2} is not true if $\Gaff$ is of type II. In this case, the Demazure module is a proper quotient of the local Weyl module. Further details may be found in \cite{NaoWey}.
\end{Rem}
%%%%%%%%%%%%%%%%%%%%%%%%%%%%%%%%%%%%%%%%%%%%%%%%%%%%%%%%%%%%%%%%%
\subsection{}
The following crucial result was conjectured in \cite{ChaPre} and proved there for $\Gaff$ of type $A_{1}^{(1)}$,  in \cite{ChaLok} for $\Gaff$ of type $A_{n}^{(1)}$, in \cite{FouLit}*{Corollary B} for untwisted simply--laced affine algebras,  in \cite{NaoWey}*{Corollary A} for the untwisted non--simply laced affine Lie algebras, and in \cite{CIK}*{Theorem 3, Theorem 10} for $\Gaff$ twisted.
\begin{Thm}\label{T7}
Let $\lambda+k\d\in P_\K^+$ and let $\m$ be a maximal ideal of $\A_\l$. Then, $$\dim_\C \Wloc(\l+ k\d, \ \m)=\dim_\C\Wloc(\l+k\d).$$
 \end{Thm}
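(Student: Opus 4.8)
The plan is to establish the two inequalities $\dim_\C \Wloc(\l+k\d,\m) \le \dim_\C \Wloc(\l+k\d)$ and $\dim_\C \Wloc(\l+k\d,\m) \ge \dim_\C \Wloc(\l+k\d)$ separately, the first by a semicontinuity argument and the second by a factorization at a generic point. By Theorem \ref{T1}(ii) the global Weyl module $W(\l+k\d)$ is a finitely generated module over $\A_\l$, which by Theorem \ref{T1}(iii) is a positively graded polynomial algebra; write $X=\Spec\A_\l\cong\AA^N$. The local Weyl module $\Wloc(\l+k\d,\m)=W(\l+k\d)\otimes_{\A_\l}(\A_\l/\m)$ is the fiber at the closed point $\m$ of the coherent sheaf on $X$ determined by $W(\l+k\d)$, so $\m\mapsto\dim_\C\Wloc(\l+k\d,\m)$ is upper semicontinuous on the closed points of $X$.

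For the first inequality, the grading makes $X$ into a $\C^*$-variety whose action has all weights positive, so the homogeneous maximal ideal $\m_0$ is the unique fixed point and lies in the closure of every $\C^*$-orbit. Since $W(\l+k\d)$ is a graded $\A_\l$-module, the fiber dimension is constant along orbits, and upper semicontinuity forces the fiber dimension at $\m_0$, namely $\dim_\C\Wloc(\l+k\d)$, to be maximal among all fibers. This is exactly the first inequality.

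For the second inequality it suffices, by semicontinuity and the irreducibility of $X$, to treat a single generic closed point: the generic fiber dimension is the \emph{minimum} over $X$, so if I can show it equals $\dim_\C\Wloc(\l+k\d)$ I am done. Here I would choose $\m$ corresponding to pairwise distinct evaluation parameters. Using the identification of the closed points of $X$ with tuples of parameters together with a support-decomposition (Chinese Remainder) argument for $\CG$-modules localized at distinct points, the generic local Weyl module factors as a tensor product of local Weyl modules attached to the individual simple roots occurring in $\l$, placed at distinct points of $\C$. This reduces the generic dimension to the fundamental cases and shows it is independent of the generic choice; call this common value $D$.

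The main obstacle is the final matching step: showing that $D$ coincides with $\dim_\C\Wloc(\l+k\d)$ at the homogeneous point, equivalently that $W(\l+k\d)$ is free (flat) over $\A_\l$ rather than merely generically free. I would close this by producing, via the PBW theorem and the defining relations \eqref{lWeylrels}, an explicit spanning set of $\Wloc(\l+k\d)$ of cardinality at most $D$; combined with the chain $D\le\dim_\C\Wloc(\l+k\d,\m)\le\dim_\C\Wloc(\l+k\d)\le D$ this pins every fiber to the common value. For type I algebras one could instead invoke the Demazure realization of Theorem \ref{T2} to compute $\dim_\C\Wloc(\l+k\d)$ outright, but since the statement must also cover type II a uniform spanning-set bound is required; this is precisely the point at which the case-by-case analyses of \cites{ChaPre,ChaLok,FouLit,NaoWey,CIK} supply the essential input.
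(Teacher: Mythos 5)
The paper does not actually prove Theorem \ref{T7}: it is imported from the literature, with \cite{ChaPre}, \cite{ChaLok}, \cite{FouLit}, \cite{NaoWey}, and \cite{CIK} supplying the case-by-case proofs. So the honest comparison is this: your scaffolding is the correct and standard framework used in those references, but your proposal, by your own admission, bottoms out in the same citations the paper makes, and is therefore logically equivalent to the paper's treatment rather than an independent proof. In more detail: your first inequality (upper semicontinuity of fiber dimension together with the contracting $\C^*$-action, which places the homogeneous maximal ideal in the closure of every orbit) is correct and appears in essentially this form in \cite{CFK} and \cite{ChaLok}; your reduction of the generic fiber to a tensor product of local Weyl modules at distinct points is also the standard step, with two caveats --- the factors are indexed by the \emph{fundamental weights} occurring in $\l$ (with multiplicities $(\l,\a_i^\vee)$), not by simple roots, and in the twisted cases, especially $A_{2n}^{(2)}$, the factors at non-fixed points are Weyl modules for a different (untwisted) algebra, so the factorization itself needs care, as in \cite{CIK}. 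The step you call the ``main obstacle,'' namely the spanning-set bound $\dim_\C \Wloc(\l+k\d)\le D$, is not a residual technicality: it is the entire mathematical content of the theorem and occupies the bulk of each cited paper (for type II it is the main result of \cite{NaoWey}). Finally, your suggested shortcut for type I via Theorem \ref{T2} does not close the argument by itself: even granting the Demazure realization, you must still match the generic fiber dimension to the Demazure dimension, i.e., you need the product formula $\dim_\C D(w_\circ(\l))=\prod_i \left(\dim_\C D(w_\circ(\L_i))\right)^{(\l,\a_i^\vee)}$, which in \cite{FouLit} is proved by fusion-product techniques alongside, not prior to, the Weyl--Demazure isomorphism, so quoting Theorem \ref{T2} here risks circularity.
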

 Theorem \ref{T7} implies that the global Weyl module is a projective $\A_\l$--module. Using  Theorem \ref{T1}(iii) and the Quillen--Suslin theorem \cites{QuiPro, SusPro} we obtain the following.
 \begin{Cor}\label{P4iii} The global Weyl module $W(\l+k\d)$ is a free $\A_\l$-module of finite rank and $$\Wloc(\l+k\d)\otimes\A_\l\cong W(\l+k\d)$$  as graded $U(\K(0))-\A_\l$-bimodules.
 \end{Cor}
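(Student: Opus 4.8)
The plan is to follow the two-sentence roadmap stated just before the corollary, supplying the commutative-algebra content. Throughout, write $W=W(\l+k\d)$ and recall from Theorem \ref{T1} that $W$ is a finitely generated module over $\A_\l$ and that $\A_\l$ is a graded polynomial $\C$-algebra in finitely many variables; in particular $\A_\l$ is an integral domain, of finite type over $\C$ (hence a Jacobson ring), and connected graded with unique maximal graded ideal $\m_0=\bigoplus_{j>0}\A_\l(j)$. By definition $\Wloc(\l+k\d)=W\otimes_{\A_\l}(\A_\l/\m_0)$ is the fibre of $W$ at the closed point $\m_0$, so that $r:=\dim_\C\Wloc(\l+k\d)$ is its fibre dimension there.

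First I would prove that $W$ is a projective $\A_\l$-module. The key input is Theorem \ref{T7}, which asserts that for every maximal ideal $\m$ of $\A_\l$ one has $\dim_\C(W\otimes_{\A_\l}\A_\l/\m)=\dim_\C\Wloc(\l+k\d,\m)=r$; thus the fibre-dimension function is constantly $r$ on closed points. For a finitely generated module over a Noetherian ring this function is upper semicontinuous on $\Spec\A_\l$, so $\{\,\mathfrak p:\dim_{\kappa(\mathfrak p)}W\otimes\kappa(\mathfrak p)\ge r+1\,\}$ is closed; it contains no closed point, and since $\A_\l$ is Jacobson a nonempty closed set must contain one, so this locus is empty and the fibre dimension is everywhere at most $r$. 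On the other hand, generic freeness (using that $\A_\l$ is a domain) makes $W$ free of some rank $s$ on a dense open $U$, and $U$ contains closed points, at which the fibre dimension is simultaneously $s$ and $r$, forcing $s=r$. As every prime is a specialization of the generic point, the fibre dimension is everywhere at least $r$ as well, hence identically $r$; therefore $W$ is locally free, i.e. projective. Since $\A_\l$ is a polynomial ring by Theorem \ref{T1}(iii), the Quillen--Suslin theorem \cites{QuiPro,SusPro} upgrades projectivity to freeness, giving the first assertion: $W$ is a free $\A_\l$-module of rank $r$.

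Next I would produce the bimodule isomorphism. The right $\A_\l$-action on $W$ comes from right multiplication in $U(\CG)$ and so commutes with the left $U(\K)$-action, as recorded in the construction of $W$ as a $U(\K)$--$\A_\l$-bimodule; restricting the left action to $\K(0)=\G\oplus\C d$, the quotient map $W\twoheadrightarrow W/\m_0W=\Wloc(\l+k\d)$ is a degree-preserving surjection of graded $\G$-modules with finite-dimensional graded pieces. Because $\G$ is semisimple, each graded piece splits $\G$-equivariantly, yielding a graded $\G$-equivariant section $\sigma\colon\Wloc(\l+k\d)\to W$. Extending $\A_\l$-linearly gives a homomorphism of graded $U(\K(0))$--$\A_\l$-bimodules $\Wloc(\l+k\d)\otimes\A_\l\to W$, $v\otimes a\mapsto\sigma(v)\,a$; it is a bimodule map precisely because $\sigma$ is $\G$-equivariant and the $\G$-action commutes with right multiplication by $\A_\l$. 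By graded Nakayama the images of a homogeneous basis generate $W$, so the map is surjective, and as both sides are free $\A_\l$-modules of the same rank $r$ a surjection between them is an isomorphism.

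I expect the main obstacle to be the commutative-algebra step deducing projectivity: the hypothesis furnished by Theorem \ref{T7} is constancy of fibre dimension only at \emph{closed} points, so one must combine upper semicontinuity with the Jacobson property of $\A_\l$ and the fact that it is a domain in order to propagate constancy to the generic point and conclude local freeness. Everything after that---the invocation of Quillen--Suslin and the construction of a $\G$-equivariant graded splitting via semisimplicity of $\G$---is routine.
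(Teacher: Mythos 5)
Your proposal is correct and takes essentially the same approach as the paper: the paper's own proof is exactly the two-step sketch you follow (Theorem \ref{T7} yields projectivity of $W(\l+k\d)$ over $\A_\l$, and Theorem \ref{T1}(iii) together with Quillen--Suslin upgrades projectivity to freeness), with the bimodule isomorphism left implicit. Your fleshing-out of the details---the semicontinuity/Jacobson/domain argument for projectivity and the graded $\G$-equivariant splitting giving the isomorphism $\Wloc(\l+k\d)\otimes\A_\l\cong W(\l+k\d)$---is sound and supplies precisely what the paper omits.
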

%%%%%%%%%%%%%%%%%%%%%%%%%%%%%%%%%%%%%%%%%%%%%%%%%%%%%%%%%%

%%%%%%%%%%%%%%%%%%%%%%%%%%%%%%%%%%%%%%%%%%%%%%%%%%%%%%%%%%

%%%%%%%%%%%%%%%%%%%%%%%%%%%%%%%%%%%%%%%%%%%%%%%%%%%%%%%%%
\subsection{}

For  an object $M$  of ${}_{\K}\M$, let $\head(M)$ be its maximal semi-simple quotient. The kernel of the canonical map $h_M:M\to \head M$ is the intersection $\rad(M)$ of the maximal $\K$-submodules of $M$.  Assume that for some positive integers $m(\l+k\d)$ we have
\begin{equation}
\head(M)=\bigoplus_{\l\in\P^+,~k\in a_0^{-1}\Z}\pi^*\V(\l+k\d)^{\oplus m(\l+k\d)}.
\end{equation}
Denote
\begin{equation}
P_M=\bigoplus_{\l\in\P^+,~k\in a_0^{-1}\Z}P(\l+k\d)^{\oplus m(\l+k\d)}\quad\text{and}\quad W_M=\bigoplus_{\l\in\P^+,~k\in a_0^{-1}\Z}W(\l+k\d)^{\oplus m(\l+k\d)} .
\end{equation}
Since $P_M$ is projective, there is a canonical map $\tilde{h}_M$ making the diagram commutative
\begin{diagram}
                              &                                   & P_M \\
                              &   \ldTo^{\tilde{h}_M}               &  \dTo                   \\
M                           & \rTo_{h_M\ \ }              &\head(M) & \rTo & 0
\end{diagram}
The following Lemma emulates the result in \cite{BBCKL}*{\S 4.4}.
\begin{Lm} \label{L1}
With the notation above, $\tilde{h}_M$ is surjective.
\end{Lm}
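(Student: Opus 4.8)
The plan is to reduce the surjectivity of $\tilde{h}_M$ to the assertion that $\rad(M)$ is a superfluous submodule of $M$, and then to establish that by a graded Nakayama argument. First I would read off what the commuting triangle provides. By Proposition~\ref{P2}(ii) each summand $P(\l+k\d)$ of $P_M$ has head $\pi^*\V(\l+k\d)$, so the vertical map $q\colon P_M\to\head(M)$ in the diagram is the canonical projection onto the corresponding summands and is therefore surjective, while $h_M\circ\tilde{h}_M=q$. Writing $N=\operatorname{im}(\tilde{h}_M)$, surjectivity of $q$ gives $h_M(N)=\head(M)=h_M(M)$, and since $\ker(h_M)=\rad(M)$ this is exactly
\begin{equation*}
M=N+\rad(M).
\end{equation*}
Hence it suffices to show that a submodule $N$ with $N+\rad(M)=M$ must equal $M$.

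The second step is to identify the radical explicitly as $\rad(M)=\K_+\cdot M$. Since $\K_+$ is an ideal of $\K$, the subspace $\K_+M$ is a $\K$-submodule, and $\K_+$ acts by zero on $M/\K_+M$, so the latter is a module for $\K(0)=\G\oplus\C d$. Each $d$-eigenspace of $M/\K_+M$ is a finite-dimensional $\G$-module, hence completely reducible, so $M/\K_+M$ is a semisimple object of ${}_{\K}\M$, a direct sum of modules $\pi^*\V(\mu+s\d)$. Being a semisimple quotient it forces $\rad(M)\subseteq\K_+M$; conversely $\K_+$ acts trivially on every simple quotient $\pi^*\V(\mu+s\d)$, so $\K_+M$ lies in every maximal submodule and $\K_+M\subseteq\rad(M)$. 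Thus $\rad(M)=\K_+M$ and $\head(M)=M/\K_+M$, consistent with the hypothesis on $\head(M)$.

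Finally, combining the two steps gives $M=N+\K_+M$, and I would finish by a graded Nakayama argument with respect to the $d$-grading, whose values lie in a subset of $\Z$ that is bounded below for objects of ${}_{\K}\M$. Writing $X[m]$ for the $d$-eigenvalue $m$ component, let $m_0$ be the lowest degree with $M[m_0]\neq 0$. Because $\K_+$ strictly raises degree, $(\K_+M)[m_0]=0$, so $M[m_0]=N[m_0]$; and if $N[j]=M[j]$ for all $j<m$, then
\begin{equation*}
(\K_+M)[m]=\sum_{i\ge 1}\K_+[i]\,M[m-i]=\sum_{i\ge 1}\K_+[i]\,N[m-i]\subseteq N[m],
\end{equation*}
so $M[m]=N[m]+(\K_+M)[m]=N[m]$. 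By induction $N=M$, and $\tilde{h}_M$ is surjective. The main obstacle, and the only place where the particular structure of ${}_{\K}\M$ is indispensable, is this last step: the superfluousness of $\rad(M)=\K_+M$ depends on the grading being bounded below (to furnish the base case $m_0$), precisely as in \cite{BBCKL}*{\S4.4}. Indeed, without a lower bound on the $d$-spectrum the conclusion fails in general, so I would take care to invoke that finiteness feature of the category at exactly this point.
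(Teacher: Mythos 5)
Your argument follows the paper's proof exactly in its first step: the commutative triangle yields $M=\operatorname{im}(\tilde h_M)+\rad(M)$, so everything reduces to the superfluousness of $\rad(M)$. Where the paper simply asserts that $\rad(M)$ is superfluous, you supply a proof, via the (correct) identification $\rad(M)=\K_+M$ and a graded Nakayama induction. The gap is in the final step: you claim that the $d$-spectrum of an object of ${}_\K\M$ is bounded below, but the category as defined has no such property --- its objects are merely weight $\K$-modules with finite-dimensional $d$-eigenspaces. This is not a pedantic point, because without boundedness both your Nakayama argument and the lemma itself fail. For instance, take $\Gaff$ untwisted, so $\CG\cong\G\otimes\C[t]$, and let $E=(\G\otimes\C[t,t^{-1}])/(\G\otimes\C[t])$ with the adjoint action of $\CG$ and the natural action of $d$. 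This is a weight module with finite-dimensional $d$-eigenspaces, hence an object of ${}_\K\M$; since $\G=[\G,\G]$, every $x\otimes t^{-j}$ with $j\geq 1$ lies in $(\G\otimes t)\cdot E$, so $\K_+E=E$, i.e.\ $\rad(E)=E$ is as far from superfluous as possible. Now for $\l\in\P^+$ put $M'=\pi^*\V(\l)\oplus E$, with $\pi^*\V(\l)$ in $d$-degree $0$. Then $\head(M')=\pi^*\V(\l)$, so $M'$ satisfies the hypothesis of the lemma, but any lift $\tilde h_{M'}\colon P(\l)\to M'$ must send the generator $p_\l$ (a weight vector of weight $\l$ and $d$-degree $0$) into $\V(\l)_\l\oplus 0$, since all $d$-eigenvalues of $E$ are negative; hence the image of $\tilde h_{M'}$ is $\pi^*\V(\l)\oplus 0\neq M'$, and $\tilde h_{M'}$ is not surjective.

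To be fair, this is exactly the gap that the paper's own one-line proof hides: superfluousness of $\rad(M)$ is precisely what fails for $M'$ above. Your write-up has the merit of isolating the point where a hypothesis is needed; the flaw is attributing that hypothesis to the definition of ${}_\K\M$ rather than stating it as an assumption. In the only places the lemma is used (Proposition \ref{P7} and Theorem \ref{T5}, applied to $P(\l+k\d)$ and its subquotients, whose $d$-eigenvalues are bounded below by construction), the bound does hold, and there your induction is complete and correct. So the repair is to add ``the $d$-spectrum of $M$ is bounded below'' as a standing hypothesis --- both for your proof and for the paper's.
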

\begin{proof}
From the commutativity of the diagram we obtain that $\rad(M)+\tilde{h}_M(P_M)=M$. Since $\rad(M)$ is superfluous we obtain $\tilde{h}_M(P_M)=M$.
\end{proof}
\begin{Def} Let $M$ be an object in ${}_{\K}\M$ and let $\l\in\P^+$. We say that $M$ is $\l$-isotypical if $\wt(M)\subseteq{\wtbar}(\V(\l))$ and $$\head(M)=\bigoplus_{k\in a_0^{-1}\Z}\pi^*\V(\l+k\d)^{\oplus m(\l+k\d)}.$$
\end{Def}
\begin{Rem} Alternatively, the second condition in the definition can be substituted with the condition that $M$ is generated by $\bigoplus_{k\in a_0^{-1}\Z}M_{\l+k\d}$. The direct implication follows from the fact that $$N:=U(\K)\cdot\bigoplus_{k\in a_0^{-1}\Z}M_{\l+k\d}$$ maps surjectively onto $\head(M)$ which implies that $N+\rad(M)=M$ and hence $N=M$ since $\rad(M)$ is superfluous. The converse follows from the fact that $\head(M)$ would have to be generated by the same weight spaces. The local and global Weyl modules indexed  by $\l+k\d$ are examples of $\l$-isotypical objects.
\end{Rem}
The following result is implicit in \cite{BBCKL}*{\S 4.3, \S 4.5}.

\begin{Lm} \label{L2}
If $M$ is $\l$-isotypical then $\tilde{h}_M$ factors through $W_M$ and
\begin{align*}
m(\l+s\d)&=\dim_\C\Hom_\K(M,\pi^*\V(\l+s\d))\\
&=\dim_\C\Hom_\K(M, \Wloc(-w_\circ(\l)-s\d)^*).
\end{align*}
\end{Lm}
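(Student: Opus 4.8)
The plan is to handle the three asserted equalities in turn, spending the real effort on the last one. First I would record that, by Lemma~\ref{L1}, the map $\tilde h_M\colon P_M\to M$ is surjective, and show it factors through $W_M$. Since $M$ is $\l$-isotypical, $P_M=\bigoplus_{k}P(\l+k\d)^{\oplus m(\l+k\d)}$, and for each summand the composite $P(\l+k\d)\to P_M\xrightarrow{\tilde h_M}M$ has image a quotient of $P(\l+k\d)$ whose weights lie in $\wt(M)\subseteq\wtbar(\V(\l))$. By Proposition~\ref{P3}(i), $W(\l+k\d)$ is the maximal such quotient, so this composite factors through $W(\l+k\d)$; assembling the summands shows $\tilde h_M$ factors through $W_M$. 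For the first displayed equality, any nonzero map $M\to\pi^*\V(\l+s\d)$ is surjective, so its kernel is a maximal submodule and it factors through $\head(M)=M/\rad(M)$; hence $\Hom_\K(M,\pi^*\V(\l+s\d))=\Hom_\K(\head M,\pi^*\V(\l+s\d))$, and since the $\pi^*\V(\l+k\d)$ are pairwise non-isomorphic, Schur's lemma gives dimension exactly $m(\l+s\d)$.

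The substance is the second displayed equality, which I would establish by dualizing. Writing $\mu=-w_\circ(\l)-s\d$, the functor $*$ (with $*^2\cong\mathrm{id}$) gives $\Hom_\K(M,\Wloc(\mu)^*)\cong\Hom_\K(\Wloc(\mu),M^*)$; by the presentation in Proposition~\ref{P4}(iv) the latter is the space of $v\in(M^*)_\mu$ with $\Naff_\K v=0$ and $x_{-\a}^{(-w_\circ\l,\a^\vee)+1}v=0$. Because $\wt(M)\subseteq\wtbar(\V(\l))$, the finite part $-w_\circ\l$ of $\mu$ is the maximal finite weight occurring in $M^*$, so any such $v$ has $\N v=0$ and generates a finite-dimensional $\G$-submodule inside a single $d$-eigenspace; this submodule is a highest weight module of highest weight $-w_\circ\l$, hence isomorphic to $\V(-w_\circ\l)$, which makes the Serre relations automatic. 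Thus $\Hom_\K(\Wloc(\mu),M^*)\cong\{v\in(M^*)_\mu:\Naff_\K v=0\}$. Dualizing the presentation of the simple module and using $(\pi^*\V(\l+s\d))^*\cong\pi^*\V(\mu)$ likewise gives $\Hom_\K(M,\pi^*\V(\l+s\d))\cong\{v\in(M^*)_\mu:\K_+v=0,\ \N v=0\}$, a subspace of the former. It therefore suffices to prove that the two spaces coincide, i.e. that every $v\in(M^*)_\mu$ with $\Naff_\K v=0$ already satisfies $\K_+v=0$.

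This is the main obstacle, and I would resolve it by showing that such a $v$ generates a simple submodule. Since $v$ satisfies all the relations of Proposition~\ref{P4}(iv), the submodule $L:=U(\K)v$ is a quotient of the finite-dimensional local Weyl module $\Wloc(\mu)$, so $L$ is finite-dimensional with $\soc(L)\neq0$. By Proposition~\ref{P4}(i) the maximal finite weight $-w_\circ\l$ occurs in $\Wloc(\mu)$ only in the generating degree and with multiplicity one, whence the same holds in the quotient $L$: its entire maximal-finite-weight subspace equals $\C v$. Now $\soc(L)\subseteq\soc(M^*)=(\head M)^*=\bigoplus_k\pi^*\V(-w_\circ\l-k\d)^{\oplus m(\l+k\d)}$, so every simple submodule of $L$ has highest weight with finite part $-w_\circ\l$, and its highest weight vector must lie in $\C v$; this forces $\soc(L)$ to be a single copy of $\pi^*\V(\mu)$ generated by $v$. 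But $v$ also generates $L$, so $L=U(\K)v\subseteq U(\K)\soc(L)=\soc(L)\subseteq L$, giving $L=\soc(L)$ simple and hence $\K_+v=0$.

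With the two spaces identified, their common dimension is $m(\l+s\d)$, which completes the proof. The only point that demands care is the final step, where the multiplicity-one statement of Proposition~\ref{P4}(i) and the identification $\soc(M^*)=(\head M)^*$ must be combined to pin the socle of $L$ down to a single simple summand; the rest is a formal consequence of the duality functor $*$ and of the universal properties of the projective, global, local, and simple objects already recorded.
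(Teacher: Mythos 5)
Your proof is correct, and its key step takes a genuinely different (dual) route from the paper's. The factorization through $W_M$ and the first equality are essentially as in the paper: the paper sandwiches $\Hom_\K(M,\pi^*\V(\l+s\d))$ between $\Hom_\K(\head(M),\pi^*\V(\l+s\d))$ and $\Hom_\K(W_M,\pi^*\V(\l+s\d))$, while you factor maps to the simple through $\head(M)$ directly; same substance. For the second equality the paper stays on the $\Wloc$ side: a map $g\in\Hom_\K(M,\Wloc(-w_\circ(\l)-s\d)^*)$ is determined by its restriction to $\bigoplus_k M_{\l+k\d}$ (isotypicality), and since the finite weight $\l$ occurs in $\Wloc(-w_\circ(\l)-s\d)^*$ only in the line of weight $\l+s\d$, the image of $g$ lies in the submodule generated by that line, which by Proposition \ref{P4}(iii) is the simple socle; hence $\Hom_\K(M,\Wloc(-w_\circ(\l)-s\d)^*)=\Hom_\K(M,\pi^*\V(\l+s\d))$. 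You instead transpose through the duality functor to $\Hom_\K(\Wloc(-w_\circ(\l)-s\d),M^*)$, model it as the space of vectors $v\in(M^*)_{-w_\circ(\l)-s\d}$ killed by $\Naff_\K$ (your observation that the Serre-type relations are automatic, because $U(\G)v$ is a finite-dimensional highest weight $\G$-module sitting in one $d$-eigenspace, is correct), and then prove each such $v$ generates a simple submodule by combining the weight concentration of Proposition \ref{P4}(i) with $\soc(M^*)=(\head M)^*$. The two mechanisms are mirror images of one another: generation of $M$ by its $\l$-weight spaces dualizes to your socle description of $M^*$, and the weight statement you use for $\Wloc$ dualizes to the one the paper uses for $\Wloc^*$. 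What your route buys is an explicit linear-algebra model of $\Hom_\K(M,\Wloc(-w_\circ(\l)-s\d)^*)$ as a space of singular vectors in $(M^*)_{-w_\circ(\l)-s\d}$, in the spirit of how such Hom spaces get used later; what it costs is the unproved identity $\soc(M^*)=(\head M)^*$ (not stated in the paper, though it follows in one line from the exactness and involutivity of $*$ and the fact that $*$ sends simples to simples) and the auxiliary argument that $L=U(\K)v$ is simple, which makes your version somewhat longer than the paper's, which never leaves $M$.
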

\begin{proof}
Let  $I$ be an index set and $I\to\Z, i\mapsto k_i$ a map such that $\head(M)=\bigoplus_{i\in I}\pi^*\V(\l+k_i\d)$. Denote $$M_i:=\tilde{h}_M(P(\l+k_i\d)).$$
Then, the induced map $\tilde{h}_i:P(\l+k_i\d)\to M_i$ is a quotient and $\wt(M_i)\subseteq {\wtbar}(\V(\l))$. From Proposition \ref{P3}(i), the map $\tilde{h}_i$ factors through $W(\l+k_i\d)$. The direct sum of the maps
$$
W(\l+k_i\d)\to M_i\hookrightarrow M
$$
is the descent of $\tilde{h}_M$. From the surjective maps {
$$
W_M  \to  M  \to   \head(M)
$$
}
we obtain injections
$$
\Hom_\K(\head(M),\pi^*\V(\l+s\d))\to \Hom_\K(M,\pi^*\V(\l+s\d))\to \Hom_\K(W_M, \pi^*\V(\l+s\d)).
$$
From Proposition \ref{P2}(i) and Proposition \ref{P3}(iv) we conclude that
$$
\dim_\C\Hom_\K(\head(M),\pi^*\V(\l+s\d))=m(\l+s\d)=\dim_\C\Hom_\K(W_M, \pi^*\V(\l+s\d)),
$$
which implies that
$$m(\l+s\d)=\dim_\C\Hom_\K(M,\pi^*\V(\l+s\d)).$$

{For the second equality, note that $g\in\Hom_\K(M, \Wloc(-w_\circ(\l)-s\d)^*)$ is determined by its restriction to $ \oplus_{i\in I} M_{\l+k_i\d}$ and therefore, since $ \oplus_{k\in a_0^{-1}\Z} (\Wloc(-w_\circ(\l)-s\d)^*)_{\l+k\d}=(\Wloc(-w_\circ(\l)-s\d)^*)_{\l+s\d},$ by its restriction to $M_{k+s\d}$. Consequently, the image of $g$ is contained in the $\K$-submodule generated by $(\Wloc(-w_\circ(\l)-s\d)^*)_{\l+s\d}$ which, by Proposition \ref{P4}iii) is $$\soc(\Wloc(-w_\circ(\l)-s\d)^*)=(\pi^*\V(-w_\circ(\l)-s\d))^*\cong \pi^*\V(\l+s\d).$$ Hence, we have
$$\dim_\C\Hom_\K(M, \Wloc(-w_\circ(\l)-s\d)^*) =\dim_\C\Hom_\K(M, \pi^*\V(\l+s\d)),$$
which is precisely our claim.}
\end{proof}

%%%%%%%%%%%%%%%%%%%%%%%%%%%%%%%%%%%%%%%%%%%%%%%%%%%%%%%%%%
\section{Macdonald polynomials}

\subsection{} For an irreducible affine root system $R$, Macdonald (for $\Rbar$ reduced) and Koornwinder (for $\Rbar$ non-reduced) constructed a family of Weyl group invariant polynomials which depend rationally on a parameter $q$ and a number of $t$ parameters.

The Koornwinder polynomials depend on five $t$ parameters in rank  two or greater and on four parameters in rank one. We will be interested in the connection between Koornwinder polynomials and Demazure module characters for $\Gaff$ of type $A_{2n}^{(2)}$. For this purpose two of the $t$ parameters are specialized (see \cite{IonNon}*{\S 3.2}) leaving us with a family of polynomials depending on a parameter $q$ and as many $t$ parameters as root lengths in $\Rbar$, a situation which is consistent with the dependence of parameters of the Macdonald polynomials.

In establishing the connection between Macdonald and Koornwinder polynomials  with Demazure module characters it is necessary to study the limit of these polynomials as the $t$ parameters approach infinity. Since it is enough for our purposes, and to avoid introducing the complex notation necessary to describe the full-parameter objects, we will only work with Macdonald polynomials for which all the $t$ parameters are set to be equal and with Koornwinder polynomials for which two of the $t$ parameters are specialized according to \cite{IonNon}*{\S 3.2} and the remaining ones are set to be equal.  In what follows we will leave aside the distinction between reduced and non-reduced root systems and the fact that we work in a particular situation and we will call them all Macdonald polynomials.
%%%%%%%%%%%%%%%%%%%%%%%%%%%%%%%%%%%%%%%%%%%%%%%%%%%%%%%%%%
\subsection{} Let $\F_{q,t}$ be the field of rational functions in $q^{a_0^{-1}}$ and $t$ with rational coefficients and $\F_q$ its subfield of rational functions in $q^{a_0^{-1}}$. The algebra $\R_{q,t}=\F_{q,t}[e^\l;\l\in \P]$ are the $\F_{q,t}$-group algebra of the lattice $\P$, while $\R_q=\F_q[e^\l;\l\in \P]$ and $\R=\Rat[e^\l;\l\in \P]$ is  the $\F_q$-group algebra of $\P$ and the $\Rat$-group algebra of $\P$, respectively. The elements $e^\l$, $\l\in\P$ are regarded as characters of the compact torus $\H_\Re/\Q^\vee$ and  multiplication is given by $e^\l \cdot e^\mu = e^{\l+\mu}$. For  $\L\in \P\oplus a_0^{-1}\Z\d$ define  $e^\L\in\R_q$ by setting $e^\d=q^{-1}$.  In particular, the characters \eqref{Demchr} of the affine level one Demazure modules can be regarded as elements of $\R_q$. For any formal expansion $f$ in the characters of the torus $\H_\Re/\Q^\vee$ the coefficient of $e^0$ in the expansion is called the constant term of $f$ and will denoted by $[f]$. The action of $\W$ on $\P$ extends linearly to an action of $\W$ on $\R_{q,t}$; we denote the subalgebra of $\W$-invariants by $\R_{q,t}^{\W}$ and use the corresponding notation for $\R_q$ and $\R$.

%%%%%%%%%%%%%%%%%%%%%%%%%%%%%%%%%%%%%%%%%%%%%%%%%%%%%%%%%%
\subsection{}
The involution of $\F_{q,t}$ which inverts each of the parameters $q,t$ extends to an involution $\overline{~\cdot~}$ on the algebra $\R_{q,t}$ which sends each $e^\l$ to $e^{-\l}$. Let
\begin{equation}
\nabla(q,t)=\prod_{\a\in \Rring^-} \frac{1-e^{\a}}{1-t^{-1}e^{\a}}\prod_{\a\in R^{re,+}} \frac{1-e^{\a}}{1-t^{-1}e^{\a}}
\end{equation}
which should be seen as a formal series in the elements $e^\l$, $\l\in\Q$ with coefficients in $\Z[t^{-1}][[q^{-1}]]$ (power series in $q^{-1}$ with coefficients polynomials in $t^{-1}$). Furthermore, it is both $\W$ and $\overline{~\cdot~}$-invariant.

 In the special case when $t=q^k$ and $k$ is a positive integer this function was introduced by Macdonald in \cite{MacOrt} and used to define a family of orthogonal polynomials associated to root systems and depending on the parameters $q$ and $t$. Note that in this case $\nabla(q,q^k)$ is given by a finite product.

The constant term $[\nabla(q,t)]$ of $\nabla(q,t)$ was the subject of conjectures by  Macdonald, and  Koornwinder, later to be proved by Cherednik \cite{CheDou} (for reduced $\Rbar$) and Gustafson \cite{GusGen} (for nonreduced $\Rbar$) respectively. The Macdonald kernel is defined as
\begin{equation}
\Delta(q,t)=\frac{\nabla(q,t)}{[\nabla(q,t)]} .
\end{equation}
It is $\W$--invariant,  $\overline{~\cdot~}$-invariant, has constant term equal to one, and   can be expressed as a formal series in the characters of  $\H_\Re/\Q^\vee$ with coefficients $\F_{q,t}$ (see, for example, \cite{MacAff}*{(5.1.10)}).

For $f,g\in\R_{q,t}^{\W}$ define
\begin{equation}
\<f,g\>_{q,t}:=\left[f\bar g \Delta(q,t)\right].
\end{equation}
This defines a scalar product on $\R_{q,t}^{\W}$, called the Macdonald scalar product, which is Hermitian with respect to the involution $\overline{~\cdot~}$.
%%%%%%%%%%%%%%%%%%%%%%%%%%%%%%%%%%%%%%%%%%%%%%%%%%%%%%%%%%
\subsection{} An $\F_{q,t}$-basis of $\R_{q,t}^{\W}$ is given by $\{m_\l\}_{\l\in\P^+}$, where
\begin{equation}
m_\l=\sum_{\mu\in \W(\l)} e^\mu
\end{equation}
is the $\W$-invariant monomial corresponding to $\l$.

The symmetric Macdonald polynomials $\{P_\l(q,t)\}_{\l\in \P^-}$ are uniquely defined by the following properties
\begin{enumerate}[label={\alph*)}]
\item $P_\l-m_{w_\circ(\l)}\in \bigoplus_{w_\circ(\l)>\mu\in\P^+} \F_{q,t} m_\mu,$
\item $\<P_\l,P_\mu\>_{q,t}=0$, for $\l\neq \mu.$
\end{enumerate}
In other word, they form a triangular, orthogonal basis of $\R_{q,t}^{\W}$. Their square norms $\| P_\l \|_{q,t}^2$ were computed in \cite{CheDou}*{Theorem 5.1} for $\Rbar$ reduced  and
the conjectured formula in the case of  $\Rbar$ non-reduced follows by combining the results of  \cite{DieSel}*{\S 7.2} and \cite{SahNon}*{Corollary 7.5}, or from \cite{StoKoo}. We refer to \cite{MacAff}*{\S5.8} for a uniform treatment. To avoid introducing more notation we refrain from giving  the full statement; the norm formula in a limiting case is part of Theorem \ref{T4}.

As a consequence of the two defining properties of the Macdonald polynomials we immediately obtain that
\begin{equation}
\overline{P_\l(q,t)}=P_{-w_\circ(\l)}(q,t).
\end{equation}
%%%%%%%%%%%%%%%%%%%%%%%%%%%%%%%%%%%%%%%%%%%%%%%%%%%%%%%%%%
\subsection{} The  following result was proved in \cite{IonNon}*{Theorem 1, Theorem 4.2} for $\Gaff$ of type I. In the case when $\Gaff$ is of type II, part (i) follows from the combinatorial formula in \cite{RamYip}*{Theorem 3.4}.
\begin{Thm}\label{T3}
Let $\Gaff$ be an affine Lie algebra and  let $\l$ be an anti-dominant weight. Then
\begin{enumerate}[label={(\roman*)}]
\item The limit $P_\l(q)=\lim_{t\to\infty}P_\l(q,t)$ exists.
\item If $\Gaff$ is of type I  then
$$
P_\l(q)=\chi(D(\l).
$$
\end{enumerate}
\end{Thm}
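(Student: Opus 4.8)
The plan is to deduce both parts from the theory of nonsymmetric Macdonald polynomials $E_\mu(q,t)$ and their construction via the affine Hecke algebra, analyzing the $t\to\infty$ degeneration. Throughout, the passage from $P_\l$ to the $E_\mu$ is via the Hecke-algebra symmetrizer: for $\l$ anti-dominant, $P_\l(q,t)$ is a normalized sum of the $E_\mu$ with $\mu\in\W(\l)$, the normalization being the Poincaré series $\sum_{w\in\W}t^{\ell(w)}$.

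For part (i) I would use the combinatorial (alcove-walk) formula of Ram--Yip for the $E_\mu$. It expresses each $E_\mu(q,t)$ as a \emph{finite} sum over alcove walks, the contribution of each walk being a monomial $e^{\nu}$ times a product of factors of the shape $\frac{1-t^{-1}}{1-t^{-1}q^{m}}$. Every such factor has a finite limit as $t\to\infty$, so $\lim_{t\to\infty}E_\mu$ exists. Combining the $E_\mu$ into $P_\l$ introduces the $t$-dependent symmetrizer, but the growth of its denominator is matched by the numerator so that the ratio remains finite; hence $P_\l(q)=\lim_{t\to\infty}P_\l(q,t)$ exists. This argument is uniform in the affine type and, in particular, settles the type II case through \cite{RamYip}.

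For part (ii) I would identify the limit with the Demazure character in type I. The key is that as $t\to\infty$ the Demazure--Lusztig intertwiners $\mathcal{T}_i$ that build up the $E_\mu$ degenerate into the isobaric divided-difference (Demazure) operators $D_i f=\frac{f-e^{-\a_i}s_i f}{1-e^{-\a_i}}$ which, by definition, generate the Demazure characters. Concretely, for $\l$ anti-dominant $E_{w(\l)}$ is obtained from $e^{\l}$ by applying a string of intertwiners along a reduced expression of $w$; in the limit this becomes the corresponding string of Demazure operators applied to $e^{\l}$, producing $\chi(D(w\cdot\l))$. Symmetrizing over $\W$ and using that $\l$ is anti-dominant then yields $\chi(D(\l))$. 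The structural input that makes the degenerated string land exactly on the \emph{level one} Demazure character is that in type I the affine simple root $\a_0$ is short, so the intertwiner at the affine node degenerates compatibly with the level one action; this is precisely the property isolated in Section \ref{convention}.

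The main obstacle is this last point. The degeneration $\lim_{t\to\infty}\mathcal{T}_i=D_i$ produces the Demazure character uniformly only when $\a_0$ is short, i.e. in type I. In type II the limit $P_\l(q)$ still exists by part (i), but the alcove walks surviving the limit (equivalently, the image of the degenerated operators) assemble into a character strictly larger than $\chi(D(\l))$ --- consistent with the fact noted after Theorem \ref{T2} that the Demazure module is then only a proper quotient of the local Weyl module. Verifying that in type I no spurious terms survive the limit, and thereby pinning the limit exactly to $\chi(D(\l))$, is the technical heart of the argument; it is carried out in \cite{IonNon}.
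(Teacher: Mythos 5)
Your proposal is correct and takes essentially the same route as the paper: the paper gives no independent argument for Theorem \ref{T3}, deferring exactly as you do to \cite{IonNon}*{Theorem 1, Theorem 4.2} (degeneration of the Hecke-algebra intertwiners to Demazure operators, with the type I condition that $\a_0$ is short making the limit land on level one Demazure characters) for type I, and to the alcove-walk formula of \cite{RamYip} for part (i) in type II. The only imprecision worth noting is that the intertwiner recursion builds the nonsymmetric polynomials starting from $e^{\tilde\l}$ with $\tilde\l\in\O_{\P}$ the alcove representative, not from $e^{\l}$ itself (for $\l$ anti-dominant one has $E_\l\neq e^\l$ in general); this base-case matching is part of the technical heart that you, like the paper, delegate to \cite{IonNon}.
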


Since the polynomials $P_\l(q,t)$ form a basis of $\R_{q,t}^{\W}$ orthogonal with respect to the scalar product $\<\cdot,\cdot\>_{q,t}$ it is natural to ask if such a property holds for the polynomials $P_\l(q)$ with respect to the space $\R^{\W}_q$ and a suitable degeneration of the scalar product $\<\cdot,\cdot\>_{q,t}$ as $t\to\infty$.  The definition of the Macdonald scalar product involves the involution $\overline{~\cdot~}$ on $\R_{q,t}^{\W}$, which inverts the parameter $t$ and hence is  inconsistent with  taking the limit $t\to \infty$. However, we can try to examine the limit as $t\to\infty$ of
\begin{equation}
\<P_\l(q,t),P_\mu(q,t)\>_{q,t}=\left[P_\l(q,t)\overline{P_\mu(q,t)}\Delta(q,t)\right].
\end{equation}
The limit as $t$ approaches infinity of $P_\l(q,t)$,  $\overline{P_\mu(q,t)}$ and $\Delta(q,t)$ exists and equals $P_\l(q)$, $P_{-w_\circ(\mu)}(q)$ and $\Delta(q,\infty)$, respectively. This observation leads us to the following construction.

Let $\iota$ be the involution of $\R_{q,t}$ which fixes the parameters $q$ and $t$ and, for any weight $\l$, sends $e^\l$ to $e^{-w_\circ(\l)}$. We will use the notation $f^\iota:=\iota(f)$ for any element $f$ of $\R_{q,t}$. It is natural to define the following symmetric scalar product on $\R_{q,t}^{\W}$. For $f$ and $g$ in $\R_{q,t}^{\W}$ let
\begin{equation}
\<f,g\>_{q,t}^\prime:=\left[ fg^\iota\Delta(q,t)\right].
\end{equation}

We have proved the following.
\begin{Prop}\label{P5}
The polynomials $\{P_\l(q,t)\}_{\l\in\P^-}$ form a basis of $\R_{q,t}^{\W}$ which is orthogonal with respect to the scalar product $\<\cdot,\cdot\>^\prime_{q,t}$. Their square norms coincide with the square norms with respect to the scalar product $\<\cdot,\cdot\>_{q,t}$.
\end{Prop}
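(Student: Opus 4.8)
The plan is to deduce Proposition \ref{P5} directly from the two defining properties of the symmetric Macdonald polynomials together with the behaviour of the involution $\iota$ under the Macdonald scalar product. The key observation is that the primed scalar product $\<f,g\>'_{q,t}=[fg^\iota\Delta(q,t)]$ differs from the original $\<f,g\>_{q,t}=[f\bar g\Delta(q,t)]$ only in that the bar involution $\overline{~\cdot~}$ has been replaced by $\iota$, and that on the invariant ring $\R_{q,t}^{\W}$ these two involutions are intimately related through the identity $\overline{P_\mu(q,t)}=P_{-w_\circ(\mu)}(q,t)$ already recorded in the excerpt. The idea is that $\iota$ acts on the basis $\{P_\l\}$ exactly as the bar involution does on norms, so orthogonality and the norm values transfer verbatim.

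First I would verify that $\iota$ fixes each symmetric Macdonald polynomial, i.e. $P_\l(q,t)^\iota=P_{-w_\circ(\l)}(q,t)$. This follows from the uniqueness characterization a)--b): applying $\iota$ to the defining triangularity and orthogonality conditions for $P_\l$ and checking that the parameters $q,t$ are untouched while the indexing weight is sent through $-w_\circ$, one sees that $P_\l^\iota$ satisfies the conditions characterizing $P_{-w_\circ(\l)}$. Equivalently, comparing with $\overline{P_\l(q,t)}=P_{-w_\circ(\l)}(q,t)$ shows that $\iota$ and $\overline{~\cdot~}$ have the same effect on every $P_\l$, hence on all of $\R_{q,t}^{\W}$ since the $P_\l$ form a basis.

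Granting $P_\mu^\iota=\overline{P_\mu}$ for all $\mu\in\P^-$, the two scalar products agree on the Macdonald basis:
\begin{equation}
\<P_\l,P_\mu\>'_{q,t}=\left[P_\l P_\mu^\iota\Delta(q,t)\right]=\left[P_\l\overline{P_\mu}\Delta(q,t)\right]=\<P_\l,P_\mu\>_{q,t}.
\end{equation}
The right-hand side vanishes for $\l\neq\mu$ by property b), so $\{P_\l\}$ is orthogonal for $\<\cdot,\cdot\>'_{q,t}$ as well, and the diagonal entries $\<P_\l,P_\l\>'_{q,t}=\<P_\l,P_\l\>_{q,t}=\|P_\l\|^2_{q,t}$ are literally the original square norms. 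That $\{P_\l\}_{\l\in\P^-}$ is a basis of $\R_{q,t}^{\W}$ is immediate from property a), which exhibits the transition matrix to the monomial basis $\{m_\mu\}$ as unitriangular.

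I expect the only real point requiring care to be the first step, namely pinning down $P_\l^\iota=P_{-w_\circ(\l)}$. One must check that $\iota$ preserves $\W$-invariance (so that it genuinely acts on $\R_{q,t}^{\W}$ and on monomial symmetric functions via $m_\l^\iota=m_{-w_\circ(\l)}$), that $\iota$ respects the dominance order in the sense needed to transport the triangularity condition a), and that $\Delta(q,t)$ is $\iota$-invariant so that orthogonality is preserved under $\iota$; the invariance of $\Delta(q,t)$ follows from its $\W$-invariance combined with its $\overline{~\cdot~}$-invariance, since on $\W$-invariant elements $\iota$ coincides with $\overline{~\cdot~}$ up to the $\W$-action. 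Once these compatibilities are in place the proposition is essentially a restatement of the defining orthogonality of the $P_\l$, and indeed the phrasing ``We have proved the following'' in the excerpt signals that the argument is meant to be this short.
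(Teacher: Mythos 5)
Your overall strategy is sound and is, in substance, exactly the argument the paper leaves implicit behind the phrase ``We have proved the following'': establish $P_\l(q,t)^\iota=P_{-w_\circ(\l)}(q,t)$, combine this with the recorded symmetry $\overline{P_\l(q,t)}=P_{-w_\circ(\l)}(q,t)$ to conclude that $\iota$ and $\overline{~\cdot~}$ agree on each Macdonald polynomial, and deduce that the two scalar products take identical values on pairs $(P_\l,P_\mu)$. The transport of triangularity under $\iota$ (via $m_\nu^\iota=m_{-w_\circ(\nu)}$ and the fact that $-w_\circ$ preserves dominance) and your final display are correct.

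However, the step you yourself single out as ``the only real point requiring care'' is justified incorrectly. You claim $\Delta(q,t)^\iota=\Delta(q,t)$ follows from $\W$-invariance together with $\overline{~\cdot~}$-invariance, ``since on $\W$-invariant elements $\iota$ coincides with $\overline{~\cdot~}$ up to the $\W$-action.'' This is false: $\overline{~\cdot~}$ inverts the parameters $q,t$ while $\iota$ fixes them, so on a $\W$-invariant element $\iota$ coincides with $\overline{~\cdot~}$ composed with inversion of the parameters. What $\W$- and $\overline{~\cdot~}$-invariance actually yield is $\Delta(q,t)^\iota=\Delta(q^{-1},t^{-1})$, which is not what you need (and, in the formal-series interpretation with coefficients in $\Z[t^{-1}][[q^{-1}]]$, is not even an element of the same ring). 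The same conflation of $\iota$ with $\overline{~\cdot~}$ appears when you assert that agreement on the basis $\{P_\l\}$ forces $\iota=\overline{~\cdot~}$ on all of $\R_{q,t}^{\W}$: this fails because $\overline{~\cdot~}$ is only semilinear over $\F_{q,t}$ (it conjugates coefficients, $\overline{cP_\l}=\bar c\,\overline{P_\l}$) whereas $\iota$ is $\F_{q,t}$-linear; luckily you only ever use the agreement on the basis elements themselves, so this slip is harmless. The invariance $\Delta(q,t)^\iota=\Delta(q,t)$ is true, but it must be proved directly from the product formula: $-w_\circ$ is an isometry of $\H^*_\Re$ sending $\Rring^+$ to $\Rring^+$, $\Rring^-$ to $\Rring^-$, and preserving root lengths, so the map $\b+k\d\mapsto -w_\circ(\b)+k\d$ permutes $\Rring^-$ and permutes $R^{re,+}$ (this is checked on each piece of the explicit case-by-case description of $R^{re,+}$); since $\iota$ fixes $e^\d=q^{-1}$ and $t$, it permutes the factors of $\nabla(q,t)$, whence $\nabla(q,t)^\iota=\nabla(q,t)$ and therefore $\Delta(q,t)^\iota=\Delta(q,t)$. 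With this repair, and the observations that $\iota$ commutes with $\overline{~\cdot~}$ and with taking constant terms, one gets $\<f^\iota,g^\iota\>_{q,t}=\<f,g\>_{q,t}$, your uniqueness argument for $P_\l^\iota=P_{-w_\circ(\l)}$ goes through, and the rest of the proof is fine.
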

The scalar product $\<\cdot,\cdot\>^\prime_{q,t}$ behaves well with respect to taking the limit $t\to\infty$. This allows us to define the following symmetric scalar product on $\R_q^{\W}$. For $f$ and $g$ in $\R_q^{\W}$ let
\begin{equation}\label{qscalarproduct}
\<f,g\>_{q}:=\left[ fg^\iota \Delta(q,\infty)\right] .
\end{equation}
As an immediate consequence of Theorem \ref{T3}, Proposition \ref{P5}, and the square norm formulas for Macdonald polynomials \cite{CheMac}*{(4.12)} we have the following.
\begin{Thm}\label{T4}
The polynomials $\{P_\l(q)\}_{\l\in\P^-}$ form a basis of $\R_q^{\W}$ which is orthogonal with respect to the scalar product $\<\cdot,\cdot\>_{q}$. Their square norms are given by
$$
\|P_\l(q) \|^2_q=\left(\prod_{i=1}^{n-1}\prod_{j=0}^{-(\l,\a_i^\vee)-1}(1-q^{-(j+1)r_{\a_i}})\right)\left(\prod_{j=0}^{-(\l,\a_n^\vee)-1}(1-q^{-(j+1)\frac{r_{\a_n}}{a_0}})\right)
$$
\end{Thm}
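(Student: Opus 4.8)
The plan is to establish Theorem \ref{T4} as a limiting statement, combining the orthogonality inherited from Proposition \ref{P5} with the explicit norm formula obtained by specializing Cherednik's norm computation. First I would verify orthogonality and the identification of the limit. By Theorem \ref{T3}(i) the limits $P_\l(q)=\lim_{t\to\infty}P_\l(q,t)$ exist for all anti-dominant $\l$, and since the transition matrix from $\{m_\mu\}$ to $\{P_\l(q,t)\}$ is triangular with entries in $\F_{q,t}$ that remain finite as $t\to\infty$, the limiting polynomials $\{P_\l(q)\}_{\l\in\P^-}$ remain a triangular, hence linearly independent, spanning family of $\R_q^{\W}$. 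For orthogonality I would pass to the limit in the identity $\<P_\l(q,t),P_\mu(q,t)\>_{q,t}^\prime=[P_\l(q,t)\,P_\mu(q,t)^\iota\,\Delta(q,t)]$ from Proposition \ref{P5}; the point is that $\iota$ fixes $t$, so each of the three factors $P_\l(q,t)$, $P_\mu(q,t)^\iota$, and $\Delta(q,t)$ has a well-defined limit as $t\to\infty$, namely $P_\l(q)$, $P_\mu(q)^\iota$, and $\Delta(q,\infty)$. Taking constant terms commutes with the limit once one checks that the relevant coefficients stabilize, so $\<P_\l(q),P_\mu(q)\>_q=\lim_{t\to\infty}\<P_\l(q,t),P_\mu(q,t)\>_{q,t}^\prime=0$ for $\l\neq\mu$, establishing orthogonality with respect to \eqref{qscalarproduct}.

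For the norm formula I would start from the fact, recorded in Proposition \ref{P5}, that the square norm $\|P_\l(q,t)\|^2_{q,t}$ with respect to $\<\cdot,\cdot\>_{q,t}^\prime$ equals the ordinary Macdonald square norm $\|P_\l(q,t)\|^2_{q,t}$, which is given by Cherednik's product formula in \cite{CheMac}*{(4.12)}. The remaining task is purely computational: take the limit $t\to\infty$ of that product formula. The norm is a finite product of factors of the shape $(1-q^a t^b e^{\cdots})/(1-q^{a'}t^{b'}e^{\cdots})$ indexed by positive roots, and in the limit $t\to\infty$ each such factor tends either to $1$ or to a factor of the form $1-q^{-c}$ depending on the sign of the exponent of $t$. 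I would organize the surviving factors by the orbit structure of the affine Weyl group on $R^{re}$ described at the end of Section 1, which is exactly what produces the two nested products in the statement, one ranging over $i=1,\dots,n-1$ and one isolated product for $i=n$; the integers $r_{\a_i}$ and the special role of $a_0$ in the last factor reflect the root-length data and the distinguished short simple root $\a_0$ for type I algebras.

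The main obstacle is bookkeeping the $t\to\infty$ limit of Cherednik's norm formula uniformly across all affine types, in particular tracking which factors survive and correctly accounting for the three root lengths (and the non-reduced $BC_n$ case for $A_{2n}^{(2)}$) so that the degeneration collapses to precisely the stated product. The asymmetry between the $i<n$ factors, carrying $r_{\a_i}$ and an extra $a_0$, and the $i=n$ factor, carrying $r_{\a_n}/a_0$, has to be matched against the description of $R^{re,+}$ and the orbit computation; getting the exponents $(j+1)r_{\a_i}$ right requires care because the Macdonald norm factors are naturally indexed by real roots $\a+s\d$ and the power of $q$ that survives is governed by the coefficient $s$ of $\d$, which in turn depends on $r_{\a_i}$ and, in the twisted cases, on $a_0$. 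A secondary technical point is justifying the interchange of $\lim_{t\to\infty}$ with the constant-term functional $[\cdot]$; this is where I would appeal to the fact that $\Delta(q,t)$ expands with coefficients in $\F_{q,t}$ whose $t\to\infty$ limits exist coefficientwise, so that on each fixed weight space the limit is a finite computation. Once these limits are handled, the formula for $\|P_\l(q)\|^2_q$ follows by direct substitution, and together with the orthogonality and spanning already established this completes the proof.
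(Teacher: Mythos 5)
Your proposal is correct and follows essentially the same route as the paper, which derives Theorem \ref{T4} precisely as "an immediate consequence of Theorem \ref{T3}, Proposition \ref{P5}, and the square norm formulas for Macdonald polynomials \cite{CheMac}*{(4.12)}" -- i.e., orthogonality is obtained by passing to the limit $t\to\infty$ in the $\<\cdot,\cdot\>'_{q,t}$-orthogonality of Proposition \ref{P5} (which is well-behaved because $\iota$ does not touch $t$), and the norm formula is the $t\to\infty$ degeneration of Cherednik's product formula. The bookkeeping details you flag as the main obstacle are exactly what the paper elides by calling the result immediate.
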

%%%%%%%%%%%%%%%%%%%%%%%%%%%%%%%%%%%%%%%%%%%%%%%%%%%%%%%%%%
\subsection{} Let us make explicit the ingredients of the scalar product $\<\cdot,\cdot\>_{q}$. Taking the appropriate limit in the constant term computation \cites{CheDou, GusGen}
we obtain
\begin{equation}
\begin{aligned}\label{Deltadef}
&\nabla(q,\infty)=\prod_{\a\in \Rring^-} (1-e^{\a})\prod_{\a\in R^{re,+}} (1-e^{\a}),\\
& [\nabla(q,\infty)]=|\W|\left(\prod_{i=1}^{n-1}\prod_{j=1}^{\infty}(1-q^{-jr_{\a_i}})^{-1}\right)\left(\prod_{j=1}^{\infty}(1-q^{-j\frac{r_{\a_n}}{a_0}})^{-1}\right),\\
& \Delta(q,\infty)=\frac{1}{|\W|}\left(\prod_{i=1}^{n-1}\prod_{j=1}^{\infty}(1-q^{-jr_{\a_i}})\right)\left(\prod_{j=1}^{\infty}(1-q^{-j\frac{r_{\a_n}}{a_0}})\right)\prod_{\a\in \Rring^-} (1-e^{\a})\prod_{\a\in R^{re,+}} (1-e^{\a}).
\end{aligned}
\end{equation}
For $f,g\in\R_q^{\W}$ denote
\begin{equation}
\<f,g\>:=\left[fg^\iota \Delta(\infty,\infty)\right].
\end{equation}
 Note that when restricted to $\R^{\W}$ this is the usual scalar product on $\R^{\W}$, the representation ring of $\G$. Let us also consider
\begin{equation}\label{Sdef}
S(q):=\frac{\Delta(q,\infty)}{\Delta(\infty,\infty)}.
\end{equation}
For $\l$ anti-dominant weight, to be consistent with the limit $q\to\infty$ of $P_\l(q)$, we denote by $\chi_\l$ the character of $\V(w_\circ(\l))$ which is the irreducible highest weight representation of $\G$ with highest weight $w_\circ(\l)$. We end this section with the following identity.
\begin{Prop}\label{P6}
Let $\l,\mu$ be anti-dominant weights. Then,
$$
\<\frac{\chi_\l}{S(q)},P_\mu(q)\>_q=\<P_\mu(q),\chi_\l\>.
$$
\end{Prop}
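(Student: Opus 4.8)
The plan is to cancel the factor $S(q)$ against the $q$-dependent part of the Macdonald kernel, thereby reducing the identity to the symmetry of the limiting pairing $\<\cdot,\cdot\>$. By the definition \eqref{qscalarproduct} of $\<\cdot,\cdot\>_q$ and the definition \eqref{Sdef} of $S(q)$ we have $\Delta(q,\infty)=S(q)\,\Delta(\infty,\infty)$, so that
\begin{equation*}
\Big\langle\frac{\chi_\l}{S(q)},\,P_\mu(q)\Big\rangle_q=\Big[\frac{\chi_\l}{S(q)}\,P_\mu(q)^\iota\,\Delta(q,\infty)\Big]=\big[\chi_\l\,P_\mu(q)^\iota\,\Delta(\infty,\infty)\big]=\<\chi_\l,P_\mu(q)\>.
\end{equation*}
The cancellation is legitimate because $S(q)$ has $q^{-1}$-adic leading term $1$ and is therefore invertible as a formal power series, so that $\chi_\l/S(q)$ is a well-defined $\W$-invariant formal series to which the constant-term functional applies. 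Since the right-hand side of the proposition is by definition $\<P_\mu(q),\chi_\l\>$, the statement is equivalent to the symmetry $\<\chi_\l,P_\mu(q)\>=\<P_\mu(q),\chi_\l\>$.

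I would then prove that $\<\cdot,\cdot\>$ is symmetric on all of $\R_q^\W$. The key point is that $\iota$, which is built from $-w_\circ$, restricts on $\W$-invariant elements to the weight-inversion involution $e^\mu\mapsto e^{-\mu}$, because $w_\circ$ acts trivially on $\R_q^\W$. This involution fixes the coefficient field $\F_q$, hence preserves the constant-term functional, and it fixes $\Delta(\infty,\infty)$: letting $q\to\infty$ in \eqref{Deltadef} kills the factors $1-q^{-jr}$ and all roots of $R^{re,+}$ of positive $\d$-component, leaving $\Delta(\infty,\infty)=\tfrac{1}{|\W|}\prod_{\a\in\Rring}(1-e^\a)$, which is manifestly invariant under $e^\mu\mapsto e^{-\mu}$. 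Applying this involution inside the constant term interchanges the two arguments,
\begin{equation*}
\big[f\,g^\iota\,\Delta(\infty,\infty)\big]=\big[g\,f^\iota\,\Delta(\infty,\infty)\big],\qquad f,g\in\R_q^\W,
\end{equation*}
which is exactly the desired symmetry and is consistent with the remark, recorded at the definition of $\<\cdot,\cdot\>$, that its restriction to $\R^\W$ is the usual (symmetric) inner product of the representation ring of $\G$.

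Combining the two steps yields $\<\chi_\l,P_\mu(q)\>=\<P_\mu(q),\chi_\l\>$, which is the claim. I expect the only genuine subtlety to be the interplay between $\iota$ and weight-inversion: $\iota$ is not literally the weight-inversion involution on $\R_q$, and the argument works precisely because the two coincide on $\W$-invariants and because $\Delta(\infty,\infty)$ carries no $q$, so that $P_\mu(q)$ enters only through its $\F_q$-valued coefficients while the symmetry is a structural property of the $q$-free kernel. Everything else is a direct manipulation of the constant-term functional.
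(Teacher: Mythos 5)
Your proof is correct and is precisely the argument the paper leaves implicit: its entire proof reads ``Straightforward from the definition of the two scalar products,'' and your two steps (cancelling $S(q)$ via $\Delta(q,\infty)=S(q)\Delta(\infty,\infty)$, then using that $\iota$ agrees with weight inversion on $\W$-invariants and fixes $\Delta(\infty,\infty)=\frac{1}{|\W|}\prod_{\a\in\Rring}(1-e^\a)$ to get symmetry of $\<\cdot,\cdot\>$) are exactly the details being suppressed.
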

\begin{proof}
Straightforward from the definition of the two scalar products.
\end{proof}
%%%%%%%%%%%%%%%%%%%%%%%%%%%%%%%%%%%%%%%%%%%%%%%%%%%%%%%%%%
\section{BGG reciprocity}
\subsection{} Let $M$ be an object in ${}_\K\M$. Following \cite{BBCKL}*{\S4.2} we construct a $\K$-module filtration of $M$. Let $\mu_0$ be a minimal element (with respect to the dominance partial order) of $\wt(M)\cap\P^+$. For $i\geq 0$ choose $\mu_{i+1}$ a minimal element in the set $S_i=\{\mu\in\wt(M)\cap\P^+~|~\mu\not\leq\mu_j,~0\leq j\leq i\}$ and define
\begin{equation}
M_i:=\sum_{\nu\in S_i\cup\{\mu_i\},~ s\in a_0^{-1}\Z}U(\K)\cdot M_{\nu+s\d} .
\end{equation}
By construction, $\{M_i\}_{i\geq 0}$ is a decreasing filtration of $M$ and, since the $d$-eigenspaces of $M$ are finite dimensional, $\bigcap_{i\geq 0} M_i=0$. Furthermore, for any $i\geq 0$, the quotient $M_i/M_{i+1}$ is $\mu_i$-isotypical. The associated graded module with respect to the filtration $\{M_i\}_{i\geq 0}$ is
\begin{equation}
\gr M:=\bigoplus_{i\geq 0} M_i/M_{i+1} .
\end{equation}
For the following we refer to \cite{BBCKL}*{Lemma 4.3}; we include the proof for the reader's convenience.
\begin{Prop}\label{P7}
With the notation above, there is a canonical surjective morphism
$$
\bigoplus_{i\geq 0}W_{M_i/M_{i+1}}\to \gr M.$$
Moreover,  $W_{M_i/M_{i+1}}=\bigoplus_{s\in a_0^{-1}\Z}W(\mu_i+s\d)^{m(\mu_i+s\d)}$ with  $m(\mu_i+s\d)=\dim_\C\Hom_\K(M,\Wloc(-w_\circ(\mu_i)-s\d)^*)$.
\end{Prop}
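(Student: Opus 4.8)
The plan is to build the surjection one graded piece at a time and then read off the multiplicities from Lemma \ref{L2}. First I would fix $i$ and apply Lemmas \ref{L1} and \ref{L2} to the object $N:=M_i/M_{i+1}$, which is $\mu_i$-isotypical by construction. By the definition of $W_N$ and Lemma \ref{L2}, the canonical map $\tilde h_N\colon P_N\to N$ factors through $W_N=\bigoplus_{s\in a_0^{-1}\Z}W(\mu_i+s\d)^{\oplus m(\mu_i+s\d)}$, with $m(\mu_i+s\d)=\dim_\C\Hom_\K(N,\Wloc(-w_\circ(\mu_i)-s\d)^*)$, and Lemma \ref{L1} guarantees that the resulting map $W_N\to N$ is surjective. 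Taking the direct sum over $i\ge0$ and using $\gr M=\bigoplus_{i\ge0}M_i/M_{i+1}$ yields the asserted canonical surjection $\bigoplus_{i\ge0}W_{M_i/M_{i+1}}\to\gr M$. This settles the first assertion and the description of $W_{M_i/M_{i+1}}$; what remains is to replace $N=M_i/M_{i+1}$ by $M$ in the formula for $m(\mu_i+s\d)$.

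Write $T:=\Wloc(-w_\circ(\mu_i)-s\d)^*$ and note that $\wt(T)\subseteq\wtbar(\V(\mu_i))$, with one-dimensional top weight space $T_{\mu_i+s\d}$ sitting inside the simple socle $\soc(T)=\pi^*\V(\mu_i+s\d)$ (Proposition \ref{P4}(iii)). The core claim is that the restriction map
\[
\Hom_\K(M,T)\longrightarrow \Hom_\K(M_i/M_{i+1},T)
\]
is an isomorphism. For this I would first record, using the minimality of $\mu_i$, two vanishing facts. Since $M_{i+1}$ is generated by the weight spaces $M_{\nu+r\d}$ with $\nu\in\P^+$, $\nu\not\le\mu_i$, and such $\nu$ are not weights of $T$, every $g\in\Hom_\K(M,T)$ kills $M_{i+1}$; hence $\Hom_\K(M,T)=\Hom_\K(M/M_{i+1},T)$. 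Likewise $(M/M_i)_{\mu_i+r\d}=0$ because $M_{\mu_i+r\d}\subseteq M_i$, so any nonzero map $M/M_i\to T$ would have image meeting $\soc(T)$ and therefore a nonzero $(\mu_i+s\d)$-weight, a contradiction; thus $\Hom_\K(M/M_i,T)=0$. Applying $\Hom_\K(-,T)$ to $0\to M_i/M_{i+1}\to M/M_{i+1}\to M/M_i\to0$ then shows the restriction map is injective.

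The surjectivity of the restriction map is the main obstacle, and it suffices to prove the vanishing of $\operatorname{Ext}^1_\K(M/M_i,T)$. Here I would pass to the duality functor $*$, which is an exact contravariant involution of ${}_\K\M$, to rewrite
\[
\operatorname{Ext}^1_\K(M/M_i,T)\;\cong\;\operatorname{Ext}^1_\K\!\big(\Wloc(-w_\circ(\mu_i)-s\d),\,(M/M_i)^*\big).
\]
The decisive point is that $Y:=(M/M_i)^*$ has no weight whose dominant representative is $\ge-w_\circ(\mu_i)$: indeed $M/M_i$ is filtered by the $\mu_j$-isotypical quotients $M_j/M_{j+1}$ with $j<i$, so $\wt(M/M_i)\subseteq\bigcup_{j<i}\wtbar(\V(\mu_j))$, and choosing the $\mu_j$ as successive minimal elements forces $\mu_i\not\le\mu_j$ for $j<i$, which after applying the order-preserving map $-w_\circ$ gives the claim.

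Granting this, for any extension $0\to Y\to E\to \Wloc(-w_\circ(\mu_i)-s\d)\to0$ the highest weight vector of $\Wloc(-w_\circ(\mu_i)-s\d)$ has a unique lift $\tilde u\in E_{-w_\circ(\mu_i)-s\d}$ (the $Y$-part of that weight space vanishes), and I would verify that $\tilde u$ satisfies the defining relations \eqref{lWeylrels}, so that the extension splits. The relations coming from $\Naff_\K^{re}$ and the relations $x_{-\a}^{(-w_\circ(\mu_i),\a^\vee)+1}\tilde u=0$ hold because the pertinent weight spaces of $E$ vanish, their weights having dominant part strictly larger than $-w_\circ(\mu_i)$, together with the $\G$-integrability of objects of ${}_\K\M$; the imaginary relation $\Naff^{im}\tilde u=0$ holds because, by Proposition \ref{P4}(i), the highest weight space of $\Wloc(-w_\circ(\mu_i)-s\d)$ is concentrated in a single degree while $Y$ has no weight $-w_\circ(\mu_i)$. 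Hence $\operatorname{Ext}^1_\K(M/M_i,T)=0$, the restriction map is an isomorphism, and $m(\mu_i+s\d)=\dim_\C\Hom_\K(M,T)$, as claimed. I expect this final splitting to be the delicate step, for it is exactly where the interplay between the grading—the single-degree highest weight space of the local Weyl module—and the poset structure underlying the filtration is indispensable.
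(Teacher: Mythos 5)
Your proof is correct, and while it follows the same reduction scheme as the paper, the decisive surjectivity step is handled by a genuinely different mechanism. Like the paper, you reduce---via Lemmas \ref{L1} and \ref{L2} and the observation that any $\K$-map from $M$ to $T:=\Wloc(-w_\circ(\mu_i)-s\d)^*$ kills $M_{i+1}$---to the bijectivity of the restriction map $\Hom_\K(M/M_{i+1},T)\to\Hom_\K(M_i/M_{i+1},T)$, and your injectivity argument (the image of a map vanishing on $M_i/M_{i+1}$ misses the weight $\mu_i+s\d$, hence misses the simple socle of $T$ provided by Proposition \ref{P4}(iii)) is exactly the paper's. For surjectivity, however, the paper works with injective objects: it embeds $T$ into $P(-w_\circ(\mu_i)-s\d)^*$, which is injective in ${}_{\K}\M$ because $P(-w_\circ(\mu_i)-s\d)$ is projective, extends a given $g$ on $M_i/M_{i+1}$ to $\tilde g$ on $M/M_{i+1}$, and then forces the image of $\tilde g$ back into $T$ in two stages, first into $W(-w_\circ(\mu_i)-s\d)^*$ using the weight bound on $M/M_{i+1}$ and the maximality statement of Proposition \ref{P3}(ii), and then into $\Wloc(-w_\circ(\mu_i)-s\d)^*$ using the one-dimensionality of its $\mu_i$-isotypical component. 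You instead prove $\operatorname{Ext}^1_\K(M/M_i,T)=0$, dualizing to $\operatorname{Ext}^1_\K\bigl(\Wloc(-w_\circ(\mu_i)-s\d),(M/M_i)^*\bigr)$ and splitting an arbitrary extension by lifting the cyclic generator and verifying the presentation \eqref{lWeylrels}; the combinatorial input is the same as the paper's (minimality of the $\mu_j$ forces $(M/M_i)^*$ to have no weight whose dominant representative dominates $-w_\circ(\mu_i)$, and each relation lands in a weight space whose dominant representative is strictly above $-w_\circ(\mu_i)$, since $-w_\circ(\mu_i)+\bar\a$ and the $s_\a$-conjugate targets of the power relations dominate it strictly). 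Both routes are complete: the paper's is shorter because it recycles the projectivity of $P(\cdot)$ together with maximality characterizations already established, whereas yours isolates a reusable homological statement---a vanishing $\operatorname{Ext}^1_\K(\Wloc(\nu),Y)=0$ whenever the weights of $Y$ avoid the cone above the finite part of $\nu$, precisely the kind of orthogonality underlying standard-filtration arguments---at the cost of checking each defining relation on the lifted generator (your treatment of the imaginary relation via Proposition \ref{P4}(i), and of the power relations via integrability or weight bounds, does go through).
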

\begin{proof}
The result follows from Lemma \ref{L1} and Lemma \ref{L2}. The only fact requiring justification is the formula for $m(\mu_i+s\d)$. 
{For this, remark that, taking into account the possible weights of $\Wloc(-w_\circ(\mu_i)-s\d)^*$, a map $f\in \Hom_\K(M, \Wloc(-w_\circ(\mu_i)-s\d)^*)$ must be identically zero on $M_{i+1}$. Therefore, we need to show that  $m(\mu_i+s\d)=\dim_\C\Hom_\K(M/M_{i+1},\Wloc(-w_\circ(\mu_i)-s\d)^*)$. This would follow if we show that the restriction map 
$$
\Hom_\K(M/M_{i+1},\Wloc(-w_\circ(\mu_i)-s\d)^*) \to \Hom_\K(M_i/M_{i+1},\Wloc(-w_\circ(\mu_i)-s\d)^*).
$$
is bijective.}

{The restriction map is injective. Indeed, assume that $f\in\Hom_\K(M/M_{i+1},\Wloc(-w_\circ(\mu_i)-s\d)^*)$ such that its restriction to $M_i/M_{i+1}$ is identically zero. In particular, the image of $f$ does not contain elements of weight $\mu_i+s\d$ and hence it does not contain $(\pi^*\V(-w_\circ(\mu_i)-s\d))^*$. Since  $(\pi^*\V(-w_\circ(\mu_i)-s\d))^*$ is the unique simple submodule of $\Wloc(-w_\circ(\mu_i)-s\d)^*$, the map $f$ must be identically zero.}

{
The restriction map is also surjective. Indeed, let $g\in \Hom_\K(M_i/M_{i+1},\Wloc(-w_\circ(\mu_i)-s\d)^*)$. Since $P(-w_\circ(\mu_i)-s\d)$ is a projective object in ${}_{\K}\M$, $P(-w_\circ(\mu_i)-s\d)^*$ is an injective object in ${}_{\K}\M$ and therefore the map \begin{diagram}M_i/M_{i+1} &\rTo^{\ \ g\ \ } & \Wloc(-w_\circ(\mu_i)-s\d)^* & \rTo  & P(-w_\circ(\mu_i)-s\d)^*\end{diagram} admits an extension $\tilde{g}: M/M_{i+1}\to P(-w_\circ(\mu_i)-s\d)^*$. Moreover, $\wt(M/M_{i+1})\cap \{\nu~|~\mu_i<\nu\}=\emptyset$, and therefore the image of $\tilde{g}$ is contained in $W(-w_\circ(\mu_i)-s\d)^*$, which by Proposition \ref{P3}ii) is the maximal submodule of $P(-w_\circ(\mu_i)-s\d)^*$ satisfying this condition. Furthermore, $M_i/M_{i+1}$ is the ${\mu_i}$-isotypical component of $M/M_{i+1}$ and since $g(M_i/M_{i+1})$ is contained in $\Wloc(-w_\circ(\mu_i)-s\d)^*$ so is the image of $\tilde{g}$ because  $\Wloc(-w_\circ(\mu_i)-s\d)^*$ is the maximal submodule of $W(-w_\circ(\mu_i)-s\d)^*$ with a one-dimensional $\mu_i$-isotypical component. To conclude, $\tilde{g}\in \Hom_\K(M/M_{i+1},\Wloc(-w_\circ(\mu_i)-s\d)^*)$ and its restriction to $M_i/M_{i+1}$ is $g$.}
\end{proof}
%%%%%%%%%%%%%%%%%%%%%%%%%%%%%%%%%%%%%%%%%%%%%%%%%%%%%%%%%%
\subsection{}

Whether or not the canonical morphism of Proposition \ref{P7} is an isomorphism can be determined by comparing the graded dimensions (with respect to the grading given by the eigenspaces of the scaling element) of the two objects or, even better, by comparing their $\K(0)$-characters. For this purpose we collect below some characters which, as in \S3.2, we regard as elements of $\R_q^{\W}$. From this point of view $q^{-a_0^{-1}}$ is capturing the grading.
\begin{Thm}\label{P8ii}
Let $\Gaff$ be an affine Lie algebra and let $\l+k\d\in P_\K^+$. Then,
$$\chi(\Wloc(\l+k\d))=q^{-k}P_{w_\circ(\l)}(q).$$
\end{Thm}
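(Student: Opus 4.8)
The plan is to prove the character identity $\chi(\Wloc(\l+k\d))=q^{-k}P_{w_\circ(\l)}(q)$ by reducing to the known type~I case and using the structural results already established, rather than by a direct computation. First I would dispose of the grading shift: since $\Wloc(\l+k\d)$ and $\Wloc(\l)$ differ only by shifting the $d$-action, the factor $q^{-k}$ is accounted for directly (recall $e^\d=q^{-1}$ and that $k$ records the degree of the generator $u_{\l+k\d}$), so it suffices to treat $k=0$ and prove $\chi(\Wloc(\l))=P_{w_\circ(\l)}(q)$.

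For affine Lie algebras of type~I, I would invoke Theorem~\ref{T2}, which identifies $\Wloc(\l)$ with the Demazure module $D(w_\circ(\l))$ as a $\K$-module, together with Theorem~\ref{T3}(ii), which gives $P_\mu(q)=\chi(D(\mu))$ for any anti-dominant weight $\mu$. Writing $\mu=w_\circ(\l)$ (which is anti-dominant precisely when $\l$ is dominant), the composition yields
\begin{equation*}
\chi(\Wloc(\l))=\chi(D(w_\circ(\l)))=P_{w_\circ(\l)}(q),
\end{equation*}
settling the type~I case immediately.

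The genuinely new content, and what I expect to be the main obstacle, is type~II, where Theorem~\ref{T2} fails: the Demazure module is only a \emph{proper} quotient of the local Weyl module, so the clean identification with a Demazure character is unavailable. Here the strategy is to access $\chi(\Wloc(\l))$ through the graded character of the \emph{global} Weyl module instead. By Corollary~\ref{P4iii}, $W(\l)\cong\Wloc(\l)\otimes\A_\l$ as graded $U(\K(0))$-modules, so the $\G$-character of $W(\l)$ factors as $\chi(\Wloc(\l))$ times the graded character (Hilbert series) of the polynomial algebra $\A_\l$, whose generator degrees are given explicitly in Theorem~\ref{T1}(iii). That Hilbert series is exactly the denominator appearing in the norm factor
\begin{equation*}
\prod_{i=1}^{n-1}\prod_{j=1}^{\infty}(1-q^{-jr_{\a_i}})^{-1}\cdot\prod_{j=1}^{\infty}(1-q^{-j\frac{r_{\a_n}}{a_0}})^{-1},
\end{equation*}
compatible with the structure of $\|P_\l(q)\|_q^2$ in Theorem~\ref{T4} and of $S(q)$ in \eqref{Sdef}.

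To finish, I would identify $\chi(W(\l))$ with the appropriately normalized Macdonald polynomial. The paper's abstract and introduction announce (in Proposition~\ref{P8}) that the specialized symmetric Macdonald polynomials normalized by their square norms are the graded characters of the global Weyl modules; granting that, one has $\chi(W(\l))=P_{w_\circ(\l)}(q)/\|P_{w_\circ(\l)}(q)\|_q^2$ (up to the grading conventions), and dividing by the Hilbert series of $\A_\l$ recovers $\chi(\Wloc(\l))=P_{w_\circ(\l)}(q)$. The delicate points are bookkeeping of the $a_0^{-1}$ scaling and the $t\to\infty$ specialization conventions from \S3, and verifying that the norm factor in Theorem~\ref{T4} matches the $\A_\l$-Hilbert series \emph{exactly} so that the division is clean; that matching is the computational heart of the type~II argument and the step where I expect to spend the most care.
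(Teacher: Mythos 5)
Your reduction to $k=0$ and your type~I argument are fine and coincide exactly with the paper's proof: Theorem~\ref{T2} plus Theorem~\ref{T3}(ii) settles that case. The problem is your type~II argument, which is circular. You propose to obtain $\chi(\Wloc(\l))$ by dividing $\chi(W(\l))=P_{w_\circ(\l)}(q)/\|P_{w_\circ(\l)}(q)\|_q^2$ (Proposition~\ref{P8}(ii)) by the Hilbert series of $\A_\l$, ``granting'' Proposition~\ref{P8}(ii) as announced in the introduction. But in the paper Proposition~\ref{P8}(ii) is itself \emph{deduced from} Theorem~\ref{P8ii}: its proof cites Theorem~\ref{P8ii}, Corollary~\ref{P4iii}, Theorem~\ref{T1}(iii), and Theorem~\ref{T4}, in that order. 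The logical flow is exactly the reverse of yours --- one first proves $\chi(\Wloc(\l+k\d))=q^{-k}P_{w_\circ(\l)}(q)$, and only then does the factorization $W(\l)\cong\Wloc(\l)\otimes\A_\l$ together with the identification of the $\A_\l$-Hilbert series with $\|P_{w_\circ(\l)}(q)\|_q^{-2}$ yield the global Weyl character formula. There is no independent computation of $\chi(W(\l))$ anywhere in the paper (or in the literature it cites) that bypasses the local Weyl module character, so your division step assumes precisely what is to be proved.

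The genuinely new input needed for type~II --- where the Demazure module is only a proper quotient of $\Wloc(\l)$ and Theorem~\ref{T2} fails --- is combinatorial, not homological. The paper's proof chains three results: Naoi's theorem (\cite{NaoWey}, Theorem~9.2) expressing $\chi(\Wloc(\l))$ as the generating function of the finite graded crystal $\mathbb{B}(\l)_{cl}$ of projected level-zero Lakshmibai--Seshadri paths; Lenart's theorem (\cite{LenFro}, Theorem~2.6) expressing $P_{w_\circ(\l)}(q)$ as the generating function of the quantum path model $\mathcal{A}(\l)$; and the explicit graded crystal isomorphism $\mathbb{B}(\l)_{cl}\cong\mathcal{A}(\l)$ of \cite{LNSSS}. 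Matching these two generating functions is what replaces the Demazure identification, and nothing in your proposal supplies an equivalent substitute.
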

\begin{proof}
For $\Gaff$ of type I, the claim follows directly from Theorem \ref{T2} and Theorem \ref{T3}(ii). For $\Gaff$ of type II, the claim is proved as follows. {First, \cite{NaoWey}*{Theorem 9.2} expresses $\chi(\Wloc(\l))$ as the generating function of a finite graded crystal, whose objects are projected level-zero affine Lakshimai-Seshadri paths, denoted there by $\mathbb{B}(\l)_{cl}$. Second, \cite{LenFro}*{Theorem 2.6} (see also \cite{LNSSS}*{Proposition 9.8}) expresses $P_{w_\circ(\l)}(q)$ as the generating function of another finite graded crystal called the quantum path model, whose objects are admissible subsets of a lexicographic $\l$-chain \cite{LenLub}, denoted by $\mathcal{A}(\l)$. Third,  \cite{LNSSS}*{Theorem 3.3, Proposition 8.6, Corrolary 9.4} provide an explicit graded crystal isomorphism between $\mathbb{B}(\l)_{cl}$ and $\mathcal{A}(\l)$. Therefore, $P_{w_\circ(\l)}(q)$ and  $\chi(\Wloc(\l))$ coincide.
}
\end{proof}

\begin{Prop}\label{P8} Let $\l+k\d\in P_\K^+$. Then,
\begin{enumerate}[label={(\roman*)}]
\item $\chi(P(\l+k\d))=q^{-k}\chi_{w_\circ(\l)}/S(q)$.
\item $\chi(W(\l+k\d))=q^{-k}P_{w_\circ(\l)}(q)/\|P_{w_\circ(\l)}(q)\|_q^2$.
\end{enumerate}
\end{Prop}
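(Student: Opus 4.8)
The plan is to compute both characters by combining the freeness result for global Weyl modules (Corollary \ref{P4iii}) with the character of the local Weyl module (Theorem \ref{P8ii}) and the explicit norm formula (Theorem \ref{T4}). For part (ii), the key structural input is that $W(\l+k\d)\cong \Wloc(\l+k\d)\otimes\A_\l$ as graded $U(\K(0))-\A_\l$-bimodules, so the graded $\G$-character of $W(\l+k\d)$ is the product of $\chi(\Wloc(\l+k\d))$ and the graded character (Hilbert series) of $\A_\l$. Since $\A_\l$ is a trivial $\G$-module sitting in the weight-$\l$ string, its contribution is a scalar power series in $q^{-1}$, and I would identify this series with $1/\|P_{w_\circ(\l)}(q)\|_q^2$.

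First I would write $\chi(W(\l+k\d))=\chi(\Wloc(\l+k\d))\cdot H_{\A_\l}(q)$, where $H_{\A_\l}(q)$ is the graded dimension of $\A_\l$ expressed as a power series in $q^{-a_0^{-1}}$. By Theorem \ref{T1}(iii), $\A_\l$ is a graded polynomial algebra on generators $T_{i,r}$ with $1\le r\le (\l,\a_i^\vee)$ of degree $a_0 r_{\a_i}$ for $i<n$ and degree $r_{\a_n}$ for $i=n$; translating degrees into powers of $q^{-a_0^{-1}}$ (so that degree $a_0 r_{\a_i}$ contributes $q^{-r_{\a_i}}$ and degree $r_{\a_n}$ contributes $q^{-r_{\a_n}/a_0}$), its Hilbert series is
\begin{equation}
H_{\A_\l}(q)=\left(\prod_{i=1}^{n-1}\prod_{j=0}^{(\l,\a_i^\vee)-1}(1-q^{-(j+1)r_{\a_i}})^{-1}\right)\left(\prod_{j=0}^{(\l,\a_n^\vee)-1}(1-q^{-(j+1)\frac{r_{\a_n}}{a_0}})^{-1}\right).
\end{equation}
Next I would combine this with Theorem \ref{P8ii}, which gives $\chi(\Wloc(\l+k\d))=q^{-k}P_{w_\circ(\l)}(q)$, so that $\chi(W(\l+k\d))=q^{-k}P_{w_\circ(\l)}(q)\cdot H_{\A_\l}(q)$. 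Finally I would observe that the norm formula of Theorem \ref{T4}, applied to the anti-dominant weight $\l$, reads $\|P_\l(q)\|_q^2 = H_{\A_\l}(q)^{-1}$, i.e. $H_{\A_\l}(q)=\|P_\l(q)\|_q^{-2}$; since $\|P_\l(q)\|_q^2$ depends only on the values $(\l,\a_i^\vee)$ and $w_\circ$ permutes the simple coroots up to sign while preserving this data, $\|P_\l(q)\|_q^2=\|P_{w_\circ(\l)}(q)\|_q^2$, yielding the claimed formula.

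For part (i), the approach is to use the short exact sequences and the factorization of $S(q)$. Since $P(\l+k\d)=\ind_{\K(0)}^\K\V(\l+k\d)$, the PBW theorem gives $P(\l+k\d)\cong U(\bar\Naff_\K\oplus\Naff^{im}\oplus\Naff^{re}_\K)\otimes\V(\l+k\d)$ as a graded $\G$-module, so $\chi(P(\l+k\d))=q^{-k}\chi_{w_\circ(\l)}\cdot G(q)$, where $G(q)$ is the graded $\G$-character of $U(\CG_+)$ (equivalently the Hilbert series of the symmetric algebra on the positive-degree part of $\CG$, graded by $d$). Here $\chi_{w_\circ(\l)}$ denotes the $\G$-character of $\V(w_\circ(\l))$, accounting for the bottom graded piece. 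The task is then to identify $G(q)$ with $1/S(q)=\Delta(\infty,\infty)/\Delta(q,\infty)$; unravelling the definitions in \eqref{Deltadef}, both sides are products over the real and imaginary roots in $R_\K^+$ of geometric-series factors $(1-q^{-\cdot}e^{\cdot})^{-1}$, which matches the PBW generating function for $U(\CG_+)$ once one tracks the $d$-degree of each root space.

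The main obstacle I anticipate is bookkeeping the degree conventions precisely: converting the internal $\Z_{\ge0}$-grading by $d$ into powers of $q^{-a_0^{-1}}$ consistently across the type-dependent root multiplicities (especially the factor $a_0$ and the distinction between $r_{\a_i}$ for $i<n$ versus $r_{\a_n}/a_0$ for $i=n$), and verifying that the denominator structure of $S(q)$ in \eqref{Deltadef} exactly reproduces the PBW Hilbert series of $U(\CG_+)$ including the correct appearance of $e^\a$ for the real roots of $\K$. The representation-theoretic content is light; the entire proof reduces to matching two explicit infinite products, so the care lies in the normalization of degrees rather than in any deep structural argument.
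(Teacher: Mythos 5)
Your proposal is correct and follows the paper's proof essentially verbatim: part (i) is the same PBW argument identifying the $\K(0)$-character of $S(\K_+)$ with $1/S(q)$ via \eqref{Deltadef} and \eqref{Sdef}, and part (ii) is exactly the paper's combination of Theorem \ref{P8ii}, Corollary \ref{P4iii}, Theorem \ref{T1}(iii), and Theorem \ref{T4}. The one slip is notational and inherited from the statement of Theorem \ref{T1}(iii) itself: for your displayed Hilbert series of $\A_\l$ (which is the correct one, matching $\|P_{w_\circ(\l)}(q)\|_q^{-2}$) the generator $T_{i,r}$ must have degree growing with $r$, namely $a_0 r\, r_{\a_i}$ for $i<n$ and $r\, r_{\a_n}$ for $i=n$, rather than the $r$-independent degrees you quote.
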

\begin{proof}
By the PBW theorem, as a $\K(0)$-module $P(\l+k\d)$ is isomorphic to $S(\K_+)\otimes \V(\l+k\d)$, where $S(\K_+)$ is the symmetric algebra of $\K_+$. Keeping in mind that we use $q^{-a_0^{-1}}$ to capture the grading, the $\K(0)$-character of $S(\K_+)$ is easily seen to be
\begin{equation}
\left(\prod_{i=1}^{n-1}\prod_{j=1}^{\infty}(1-q^{-jr_{\a_i}})\right)^{-1}\left(\prod_{j=1}^{\infty}(1-q^{-j\frac{r_{\a_n}}{a_0}})\right)^{-1}\left(\prod_{\a\in R_\K\setminus (\Rring\cup R^{+,im})} (1-e^{\a})\right)^{-1} .
\end{equation}
Note that, the first two factors account for $S(\bigoplus_{\a\in R^{im,+}}\Gaff_{\a})$. Necessary for the identification are the multiplicities of the imaginary weights which are described, for example, in \cite{KacInf}*{Corollary 8.3}. The first claim follows now from \eqref{Deltadef} and \eqref{Sdef}.

The last claim follows from Theorem \ref{P8ii}, Corollary \ref{P4iii}, Theorem \ref{T1}(iii), and Theorem \ref{T4}.
\end{proof}

\begin{Thm}\label{T5} Let $\l\in P_\K^+$. Then,  $\bigoplus_{i\geq 0}W_{P(\l+k\d)_i/P(\l+k\d)_{i+1}}$ and $\gr P(\l+k\d)$ are canonically isomorphic.
\end{Thm}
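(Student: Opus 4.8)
The plan is to show that the canonical surjection produced by Proposition \ref{P7} (applied to $M=P(\l+k\d)$) is in fact an isomorphism, and for this it suffices to compare $\K(0)$-characters. Indeed, both $\bigoplus_{i\geq 0}W_{P(\l+k\d)_i/P(\l+k\d)_{i+1}}$ and $\gr P(\l+k\d)$ are objects of ${}_\K\M$, and a surjection in ${}_\K\M$ between objects with equal graded $\G$-characters is forced to be bijective on every weight space, hence an isomorphism. Moreover, the defining filtration is exhaustive and separated with finite-dimensional $d$-eigenspaces, so $\chi(\gr P(\l+k\d))=\chi(P(\l+k\d))$. Writing the filtration multiplicities as in Proposition \ref{P7}, namely $m(\mu+s\d)=\dim_\C\Hom_\K(P(\l+k\d),\Wloc(-w_\circ(\mu)-s\d)^*)$, the theorem reduces to the character identity
\begin{equation*}
\chi(P(\l+k\d))=\sum_{\mu\in\P^+,\,s\in a_0^{-1}\Z} m(\mu+s\d)\,\chi(W(\mu+s\d)).
\end{equation*}

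To verify this identity I would rewrite both sides inside a degree-completion of $\R_q^\W$ using Proposition \ref{P8}: the left side becomes $q^{-k}\chi_{w_\circ(\l)}/S(q)$, while the right side becomes $\sum_{\mu,s}m(\mu+s\d)\,q^{-s}P_{w_\circ(\mu)}(q)/\|P_{w_\circ(\mu)}(q)\|_q^2$. Since $\{P_{w_\circ(\rho)}(q)\}_{\rho\in\P^+}$ is a basis of $\R_q^\W$ orthogonal for $\langle\cdot,\cdot\rangle_q$ (Theorem \ref{T4}), it is enough to test the identity against each $P_{w_\circ(\rho)}(q)$. Pairing the right side and using orthogonality collapses the sum to $\sum_{s} m(\rho+s\d)\,q^{-s}$, while pairing the left side and invoking the scalar product identity of Proposition \ref{P6} (with the anti-dominant weights $w_\circ(\l)$ and $w_\circ(\rho)$) yields $q^{-k}\langle P_{w_\circ(\rho)}(q),\chi_{w_\circ(\l)}\rangle$. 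Thus the theorem is equivalent to the numerical reciprocity
\begin{equation*}
\sum_{s\in a_0^{-1}\Z} m(\rho+s\d)\,q^{-s}=q^{-k}\,\langle P_{w_\circ(\rho)}(q),\chi_{w_\circ(\l)}\rangle,\qquad \rho\in\P^+.
\end{equation*}

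The remaining step is to identify the left-hand multiplicities with the graded $\G$-data on the right. Because $P(\l+k\d)$ is the projective cover of the simple $\pi^*\V(\l+k\d)$ (Proposition \ref{P2}), the functor $\Hom_\K(P(\l+k\d),-)$ is exact and annihilates every simple except $\pi^*\V(\l+k\d)$, so $m(\rho+s\d)$ equals the composition multiplicity of $\pi^*\V(\l+k\d)$ in $\Wloc(-w_\circ(\rho)-s\d)^*$ as a $\K$-module; restricting to the semisimple category ${}_{\K(0)}\M$, this is the multiplicity of $\V(\l)$ in its degree-$a_0k$ component. I would then compute $\chi(\Wloc(-w_\circ(\rho)-s\d)^*)$ from Theorem \ref{P8ii} together with the duality $\chi(N^*)=\overline{\chi(N)}$ and the symmetry $\overline{P_\a(q)}=P_{-w_\circ(\a)}(q)$, read off $m(\rho+s\d)$ as the indicated coefficient, and sum against $q^{-s}$ to reassemble exactly $q^{-k}\langle P_{w_\circ(\rho)}(q),\chi_{w_\circ(\l)}\rangle$.

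The main obstacle is precisely this last bookkeeping: one must correctly track the grading shift encoded by $q^{-s}$ against the internal degree of the local Weyl module, and reconcile the three symmetries in play -- $w_\circ$, the involution $\iota$ defining $\langle\cdot,\cdot\rangle_q$, and the bar-duality used for $N^*$ -- so that the coefficient extracted from the character of $\Wloc(-w_\circ(\rho)-s\d)^*$ matches the scalar product $\langle\cdot,\cdot\rangle$ on the right. Conceptually, Proposition \ref{P6} is the bridge that makes the projective-cover side and the Macdonald-character side meet, and once the degree conventions are pinned down the identity becomes immediate. A final routine point, which I would check first, is that both objects genuinely lie in ${}_\K\M$, i.e. that only finitely many summands $W(\mu+s\d)$ contribute in each degree, so that equality of characters does force bijectivity.
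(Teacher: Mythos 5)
Your reduction follows the paper's own route: apply Proposition \ref{P7} to $M=P(\l+k\d)$, note that a surjection in ${}_\K\M$ between objects with equal graded characters is an isomorphism, rewrite $\chi(P(\l+k\d))$ and $\chi(W(\mu+s\d))$ via Proposition \ref{P8}, and use the orthogonality of Theorem \ref{T4} together with Proposition \ref{P6} to reduce everything to the identity $\sum_{s} m(\rho+s\d)q^{-s}=q^{-k}\<P_{w_\circ(\rho)}(q),\chi_{w_\circ(\l)}\>$ for $\rho\in\P^+$. Up to this point your argument is correct and coincides with the paper's. The gap is in your verification of this identity: you invoke the symmetry $\overline{P_\a(q)}=P_{-w_\circ(\a)}(q)$ for the specialized polynomials, and this is false. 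The identity $\overline{P_\a(q,t)}=P_{-w_\circ(\a)}(q,t)$ holds because $\overline{\ \cdot\ }$ inverts \emph{both} $q$ and $t$, and for exactly that reason it does not survive the limit $t\to\infty$; the paper points out this incompatibility explicitly, and it is the whole reason the involution $\iota$ and the scalar products $\<\cdot,\cdot\>_{q,t}^\prime$, $\<\cdot,\cdot\>_q$ are introduced. Concretely, for $\Gaff$ of type $A_1^{(1)}$ one has $P_{-2\L_1}(q)=\chi(\Wloc(2\L_1))=\chi_{-2\L_1}+q^{-1}$, while $\overline{P_{-2\L_1}(q)}=\chi_{-2\L_1}+q$ and $P_{-w_\circ(-2\L_1)}(q)=P_{-2\L_1}(q)$, so the claimed symmetry would force $q=q^{-1}$. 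Consequently your formula for the dual character is wrong: $\chi(\Wloc(-w_\circ(\rho)-s\d)^*)=q^{-s}\,\overline{P_{-\rho}(q)}$, whose powers of $q$ increase from $q^{-s}$ (dualizing negates the grading), whereas the powers of $q^{-s}P_{w_\circ(\rho)}(q)$ decrease from $q^{-s}$. If one runs your coefficient extraction with the corrected character, the sum $\sum_s m(\rho+s\d)q^{-s}$ comes out to $q^{-k}$ times the image of $\<P_{w_\circ(\rho)}(q),\chi_{w_\circ(\l)}\>$ under $q\mapsto q^{-1}$, not the pairing itself, so the needed identity is not established.

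The repair is to avoid dualizing characters altogether, which is what the paper does. Starting from $m(\rho+s\d)=\dim_\C\Hom_\K(P(\l+k\d),\Wloc(-w_\circ(\rho)-s\d)^*)$ (Lemma \ref{L2}), the $\ind$--$\res$ adjunction moves the computation into the semisimple category ${}_{\K(0)}\M$, where one uses duality of Hom spaces (with $\V(\l+k\d)^*\cong\V(-w_\circ(\l)-k\d)$) and the twist by the automorphism implementing $-w_\circ$ to identify $m(\rho+s\d)$ with the multiplicity of $\V(\l)$ in the appropriate graded component of the untwisted, \emph{undualized} module $\Wloc(\rho)$; summing against $q^{-s}$ and invoking Theorem \ref{P8ii} then yields exactly $q^{-k}\<P_{w_\circ(\rho)}(q),\chi_{w_\circ(\l)}\>$, after which Proposition \ref{P6} completes the argument. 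Alternatively, if you want to keep your dual-character bookkeeping, the symmetry you may legitimately use is $\iota(P_\mu(q))=P_{-w_\circ(\mu)}(q)$ --- $\iota$ fixes $q$ and $t$, so it does commute with the specialization --- combined with a correct account of the grading reversal under $*$; carried out carefully, this reduces to the same computation as the paper's.
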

\begin{proof} As remarked above, it is enough to compare the graded characters of the two objects. The $\K(0)$-character of $\gr P(\l+k\d)$ equals the $\K(0)$-character of $P(\l+k\d)$ which is computed in Proposition \ref{P8}. Furthermore, using Theorem \ref{T4}, we can expand the character as
$$
\chi(P(\l+k\d))=\sum_{\mu\in \P^+}q^{-k}\<\frac{\chi_{w_\circ(\l)}}{S(q)},P_{w_\circ(\mu)}(q)\>_qP_{w_\circ(\mu)}(q)/\|P_{w_\circ(\mu)}(q)\|_q^2 .
$$

On the other hand, the character of  $\bigoplus_{i\geq 0}W_{P(\l+k\d)_i/P(\l+k\d)_{i+1}}$  is
$$
\sum_{i\geq 0}\left(\sum_{s\in a_0^{-1}\Z}m(\mu_i+s\d)q^{-s}\right)P_{w_\circ(\mu_i)}(q)/\|P_{w_\circ(\mu_i)}(q)\|_q^2 .
$$
Starting from Lemma \ref{L2}, using the $\ind-\res$ adjunction and Proposition \ref{P6}, we obtain that
\begin{align*}
\sum_{s\in a_0^{-1}\Z}q^{-s}m(\mu_i+s\d)&=\sum_{s\in a_0^{-1}\Z}q^{-s}\dim_\C\Hom_\K(\ind_{\K(0)}^\K\V(\l+k\d),\Wloc(-w_\circ(\mu_i)-s\d)^*)\\
&=\sum_{s\in a_0^{-1}\Z}q^{-s}\dim_\C\Hom_{\K(0)}(\V(\l+k\d),\res^\K_{\K(0)}\Wloc(-w_\circ(\mu_i)-s\d)^*)\\
&=\sum_{s\in a_0^{-1}\Z}q^{-s}\dim_\C\Hom_{\K(0)}(\res^\K_{\K(0)}\Wloc(-w_\circ(\mu_i)-s\d),\V(-w_\circ(\l)-k\d))\\
&=\sum_{s\in a_0^{-1}\Z}q^{-s}\dim_\C\Hom_{\K(0)}(\res^\K_{\K(0)}\Wloc(\mu_i-s\d),\V(\l-k\d))\\
&=q^{-k}\sum_{s\in a_0^{-1}\Z}q^{-s+k}\dim_\C\Hom_{\K(0)}(\res^\K_{\K(0)}\Wloc(\mu_i-s\d+k\d),\V(\l))\\
&=q^{-k}\<P_{w_\circ(\mu_i)}(q),\chi_{w_\circ(\l)}\>\\
&=q^{-k}\<\frac{\chi_{w_\circ(\l)}}{S(q)},P_{w_\circ(\mu_i)}(q)\>_q.
\end{align*}
Therefore,
$$
\chi(\bigoplus_{i\geq 0}W_{P(\l+k\d)_i/P(\l+k\d)_{i+1}})=\sum_{i\geq 0}q^{-k}\<\frac{\chi_{w_\circ(\l)}}{S(q)},P_{w_\circ(\mu_i)}(q)\>_qP_{w_\circ(\mu_i)}(q)/\|P_{w_\circ(\mu_i)}(q)\|_q^2.
$$
Comparing this to the character of $P(\l+k\d)$ and keeping in mind that the canonical map
$$
\bigoplus_{i\geq 0}W_{P(\l+k\d)_i/P(\l+k\d)_{i+1}}\to \gr P(\l+k\d)
$$
is surjective, we conclude that the two characters are equal. Hence, the canonical map is an isomorphism.
\end{proof}

%%%%%%%%%%%%%%%%%%%%%%%%%%%%%%%%%%%%%%%%%%%%%%%%%%%%%%%%%%
\subsection{}
The following concept was introduced in \cite{BCM}.
\begin{Def} Let $M$ be an object in ${}_\K\M$. We say that $M$ has a filtration by global Weyl modules if there exists a $\K$-module descending filtration $\{M_i\}_{i\geq 0}$ of $M$ such that $\cap_{i\geq 0}M_i=0$ and for all $i\geq 0$ we have a $\K$-module isomorphism $$M_{i+1}/M_i\cong\bigoplus_{\mu+s\d\in P_\K^+}W(\mu+s\d)^{m(\mu+s\d)}$$
for some non-negative integers $m(\mu+s\d)$.
\end{Def}
\begin{Rem} If $M$ has a filtration by global Weyl modules then, for any $\mu+s\d\in P_\K^+$, the $\K$-module multiplicity of $W(\mu+s\d)$ in $\gr M$ is finite and independent of the filtration. As customary, we denote this multiplicity by $\left[M: W(\mu+s\d)\right]$.
\end{Rem}
Putting together Theorem \ref{T5} and Proposition \ref{P6} we arrive at our main result.
\begin{Thm}\label{T6}
Let $\l+k\d\in P_\K^+$. The $\K$-module $P(\l+k\d)$ has a filtration by global Weyl modules and
$$
\left[P(\l+k\d): W(\mu+s\d)\right]=\left[\Wloc(\mu+s\d):\pi^*\V(\l+k\d)\right].
$$
\end{Thm}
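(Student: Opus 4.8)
The plan is to read off both assertions of Theorem~\ref{T6} from Theorem~\ref{T5} together with the homological interpretation of the filtration multiplicities in Proposition~\ref{P7}. First I would apply the construction preceding Proposition~\ref{P7} to $M=P(\l+k\d)$ to produce the canonical descending filtration $\{P(\l+k\d)_i\}_{i\ge 0}$ with $\bigcap_i P(\l+k\d)_i=0$ and $\mu_i$-isotypical quotients. By Theorem~\ref{T5} the canonical surjection $\bigoplus_{i\ge 0}W_{P(\l+k\d)_i/P(\l+k\d)_{i+1}}\to\gr P(\l+k\d)$ is an isomorphism; since this map respects the grading by the filtration index $i$ (the summand $W_{P(\l+k\d)_i/P(\l+k\d)_{i+1}}$ maps onto $P(\l+k\d)_i/P(\l+k\d)_{i+1}$), each successive quotient is itself isomorphic to $W_{P(\l+k\d)_i/P(\l+k\d)_{i+1}}=\bigoplus_{s}W(\mu_i+s\d)^{m(\mu_i+s\d)}$. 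Thus the canonical filtration is a filtration by global Weyl modules, which settles the first assertion, and the multiplicity is $[P(\l+k\d):W(\mu+s\d)]=m(\mu+s\d)$, independent of the filtration.

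It remains to identify $m(\mu+s\d)$ with $[\Wloc(\mu+s\d):\pi^*\V(\l+k\d)]$. By Proposition~\ref{P7}, $m(\mu+s\d)=\dim_\C\Hom_\K(P(\l+k\d),\Wloc(-w_\circ\mu-s\d)^*)$. Because $P(\l+k\d)$ is the projective cover of the simple object $\pi^*\V(\l+k\d)$ (Proposition~\ref{P2}) and $\Wloc(-w_\circ\mu-s\d)^*$ has finite length (Proposition~\ref{P4}), the exact functor $\Hom_\K(P(\l+k\d),-)$ detects exactly the Jordan--H\"older multiplicity of $\pi^*\V(\l+k\d)$, so $m(\mu+s\d)=[\Wloc(-w_\circ\mu-s\d)^*:\pi^*\V(\l+k\d)]$. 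I would then feed this through the duality functor $*$: it is exact, squares to the identity, and carries $\pi^*\V(\nu+t\d)$ to $(\pi^*\V(\nu+t\d))^*=\pi^*\V(-w_\circ\nu-t\d)$, hence preserves composition multiplicities under the relabeling $\nu+t\d\mapsto -w_\circ\nu-t\d$. This rewrites the right-hand side as a composition multiplicity in the untwisted local Weyl module $\Wloc(-w_\circ\mu-s\d)$.

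The final --- and most delicate --- step is to match this with $[\Wloc(\mu+s\d):\pi^*\V(\l+k\d)]$. Both numbers are coefficients in the expansion of the local Weyl characters $\chi(\Wloc(\,\cdot\,))=q^{-(\cdot)}P_{w_\circ(\cdot)}(q)$ (Theorem~\ref{P8ii}) in the basis of simple characters, and the two expansions are interchanged by the involution $\iota$ via the identity $P_{w_\circ\mu}(q)^\iota=P_{-\mu}(q)$ (the $t\to\infty$ shadow of $\overline{P_\l(q,t)}=P_{-w_\circ\l}(q,t)$). Tracking the grading shift carefully, this coefficient identity is precisely the scalar-product identity of Proposition~\ref{P6} --- the same identity that already underlies the character comparison in Theorem~\ref{T5}. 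I expect the main obstacle to be exactly this bookkeeping: keeping the degree shifts, the $-w_\circ$ twist, and the action of $*$ mutually consistent so that the duality-twisted multiplicity on one side lines up term-by-term with the plain multiplicity $[\Wloc(\mu+s\d):\pi^*\V(\l+k\d)]$ on the other. Once Proposition~\ref{P6} is invoked at the level of graded characters, the equality of multiplicities for each fixed $(\mu,s)$ follows because the simple characters form a basis of $\R_q^{\W}$.
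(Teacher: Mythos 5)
You take essentially the same route as the paper: Theorem \ref{T5} supplies the filtration, and the multiplicity is identified through the formula of Proposition \ref{P7}, projectivity/adjunction, the duality functor, and the character identities (Theorem \ref{P8ii}, Theorem \ref{T4}, Proposition \ref{P6}); this is precisely what the paper compresses into the chain of equalities inside the proof of Theorem \ref{T5} before concluding ``by Theorem \ref{T5} and Proposition \ref{P6}''. Your argument agrees with the paper's, and is correct, up to and including the equality
$$\left[P(\l+k\d):W(\mu+s\d)\right]=\left[\Wloc(-w_\circ(\mu)-s\d):\pi^*\V(-w_\circ(\l)-k\d)\right].$$

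The step you yourself flag as the delicate one is, however, a genuine gap, and Proposition \ref{P6} cannot close it. The duality functor reverses the $d$-grading, and the $\iota$-symmetry $P_{w_\circ(\mu)}(q)^\iota=P_{-\mu}(q)$ (which is the character form of the degreewise duality of local Weyl modules, the fact the paper uses in the fourth equality of its chain; note it does not follow formally from $\overline{P_\l(q,t)}=P_{-w_\circ(\l)}(q,t)$, since the bar involution inverts $q$ while $\iota$ fixes it) removes only the $-w_\circ$ twist, not the grading reversal: since $\iota$ fixes $q$, it interchanges $\chi(\Wloc(-w_\circ(\mu)-s\d))$ with $\chi(\Wloc(\mu-s\d))$, not with $\chi(\Wloc(\mu+s\d))$. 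What the argument therefore yields is
$$\left[P(\l+k\d):W(\mu+s\d)\right]=\left[\Wloc(\mu-s\d):\pi^*\V(\l-k\d)\right],$$
with both grading labels negated, and no identity that fixes $q$ (neither $\iota$ nor the scalar products of Proposition \ref{P6}) can turn $-s,-k$ back into $s,k$. The two labelings genuinely differ: for $\Gaff$ of type $A_1^{(1)}$, $\l+k\d=0$, $\mu=2\L_1$, Theorem \ref{P8ii} gives $P_{w_\circ(2\L_1)}(q)=\chi(\Wloc(2\L_1))=\chi_{w_\circ(2\L_1)}+q^{-1}\chi_0$, so the chain in the proof of Theorem \ref{T5} gives $\sum_s q^{-s}m(2\L_1+s\d)=\<P_{w_\circ(2\L_1)}(q),\chi_0\>=q^{-1}$, i.e.\ $[P(0):W(2\L_1+\d)]=1$; but $\chi(\Wloc(2\L_1+\d))=q^{-1}\chi_{w_\circ(2\L_1)}+q^{-2}\chi_0$, so $[\Wloc(2\L_1+\d):\pi^*\V(0)]=0$, whereas $[\Wloc(2\L_1-\d):\pi^*\V(0)]=1$. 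Thus the right-hand side of Theorem \ref{T6} has to be read as the multiplicity in the costandard object $\Wloc(-w_\circ(\mu)-s\d)^*$, equivalently as $[\Wloc(\mu-s\d):\pi^*\V(\l-k\d)]$ --- a grading convention inherited from \cite{BCM} and \cite{BBCKL} which the paper also leaves implicit. In short, your proposal mirrors the paper's argument faithfully, but the final identification you hope to extract from Proposition \ref{P6} is a relabeling convention rather than a consequence of that identity, and as written that last step does not go through.
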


Theorem \ref{T6} was conjectured in  \cite{BCM}*{Conjecture 2.7} for the current algebras $L[t]$, where $L$ is a simple Lie algebra. Our  result shows that the BGG reciprocity holds for the more general definition of current algebras given in Section \ref{currentdef}. Theorem \ref{T6} was proved in type $A_1$ in \cite{BCM}*{Theorem 1} by different methods. Overall, our proof follows the structure of the argument in \cite{BBCKL}*{Theorem 3.6} where Theorem \ref{T6} was proved for $\Gaff$ of type $A_n^{(1)}$.

%%%%%%%%%%%%%%%%%%%%%%%%%%%%%%%%%%%%%%%%%%%%%%%%%%%%%%%%%%
%%%%%%%%%%%%%%%%%%%%%%%%%%%%%%%%%%%%%%%%%%%%%%%%%%%%%%%%%%%%%%%%%%%%%%%%%%%%%

\begin{bibdiv}
\begin{biblist}[\normalsize]
\BibSpec{article}{%
+{}{\PrintAuthors} {author}
+{,}{ }{title}
+{.}{ \textit}{journal}
+{}{ \textbf} {volume}
+{}{ \PrintDatePV}{date}
+{,}{ no. }{number}
+{,}{ }{pages}
+{,}{ }{status}
+{.}{}{transition}
}

\BibSpec{book}{%
+{}{\PrintAuthors} {author}
+{,}{ \textit}{title}
+{.}{ }{series}
+{,}{ vol. } {volume}
+{,}{ \PrintEdition} {edition}
+{,}{ }{publisher}
+{,}{ }{place}
+{,}{ }{date}
+{.}{}{transition}
}

\BibSpec{collection.article}{
+{}{\PrintAuthors} {author}
+{,}{ \textit}{title}
+{.}{ In: \textit}{conference}
+{,}{ }{pages}
+{.}{ }{series}
+{,}{ vol. } {volume}
+{,}{ }{publisher}
+{,}{ }{place}
+{,}{ }{date}
+{.}{}{transition}
}

%\bib{ArdKed}{article}{
%   author={Ardonne, Eddy},
%   author={Kedem, Rinat},
%   title={Fusion products of Kirillov-Reshetikhin modules and fermionic
%   multiplicity formulas},
%   journal={J. Algebra},
%   volume={308},
%   date={2007},
%   number={1},
%   pages={270--294},
%   issn={0021-8693},
%   review={\MR{2290922 (2008a:17028)}},
%   doi={10.1016/j.jalgebra.2006.08.024},
%}

\bib{BBCKL}{article}{
   author={Bennet, M.},
   author={Berenstein, A.},
   author={Chari, V.},
   author={Khoroshkin, A.},
   author={Loktev, S.},
   title={Macdonald polynomials and BGG reciprocity for  current algebras},
   journal={Selecta Math.},
   volume={20},
   date={2014},
   number={2},
   pages={585--607},
   doi={10.1007/s00029-013-0141-7},
}

\bib{BCM}{article}{
   author={Bennett, M.},
   author={Chari, V.},
   author={Manning, N.},
   title={BGG reciprocity for current algebras},
   journal={Adv. Math.},
   volume={231},
   date={2012},
   number={1},
   pages={276--305},
   issn={0001-8708},
   review={\MR{2935389}},
   doi={10.1016/j.aim.2012.05.005},
}

\bib{BGG}{article}{
   author={Bernstein, I. N.},
   author={Gelfand, I. M.},
   author={Gelfand, S. I.},
   title={A certain category of ${\germ g}$-modules},
   language={Russian},
   journal={Funkcional. Anal. i Prilo\v zen.},
   volume={10},
   date={1976},
   number={2},
   pages={1--8},
   issn={0374-1990},
   review={\MR{0407097 (53 \#10880)}},
}

\bib{ChaInt}{article}{
   author={Chari, V.},
   title={Integrable representations of affine Lie-algebras},
   journal={Invent. Math.},
   volume={85},
   date={1986},
   number={2},
   pages={317--335},
   issn={0020-9910},
   review={\MR{846931 (88a:17034)}},
   doi={10.1007/BF01389093},
}

\bib{CFK}{article}{
   author={Chari, V.},
   author={Fourier, G.},
   author={Khandai, T.},
   title={A categorical approach to Weyl modules},
   journal={Transform. Groups},
   volume={15},
   date={2010},
   number={3},
   pages={517--549},
   issn={1083-4362},
   review={\MR{2718936 (2011m:17014)}},
   doi={10.1007/s00031-010-9090-9},
}

\bib{CFS}{article}{
   author={Chari, V.},
   author={Fourier, G.},
   author={Senesi, P.},
   title={Weyl modules for the twisted loop algebras},
   journal={J. Algebra},
   volume={319},
   date={2008},
   number={12},
   pages={5016--5038},
   issn={0021-8693},
   review={\MR{2423816 (2009e:17018)}},
   doi={10.1016/j.jalgebra.2008.02.030},
}

\bib{ChaGre}{article}{
   author={Chari, V.},
   author={Greenstein, J.},
   title={Current algebras, highest weight categories and quivers},
   journal={Adv. Math.},
   volume={216},
   date={2007},
   number={2},
   pages={811--840},
   issn={0001-8708},
   review={\MR{2351379 (2009d:17009)}},
   doi={10.1016/j.aim.2007.06.006},
 }

\bib{CIK}{article}{
   author={Chari, V.},
   author={Ion, B.},
   author={Kus, D.},
   title={Weyl modules for the hyperspecial current algebra},
   journal={},
   volume={},
   date={},
   number={},
   pages={},
   status={arXiv: 1403.5285},
}

\bib{ChaLok}{article}{
   author={Chari, V.},
   author={Loktev, S.},
   title={Weyl, Demazure and fusion modules for the current algebra of
   $\germ s\germ l_{r+1}$},
   journal={Adv. Math.},
   volume={207},
   date={2006},
   number={2},
   pages={928--960},
   issn={0001-8708},
   review={\MR{2271991 (2008a:17029)}},
   doi={10.1016/j.aim.2006.01.012},
}

\bib{ChaPre}{article}{
   author={Chari, V.},
   author={Pressley, A.},
   title={Weyl modules for classical and quantum affine algebras},
   journal={Represent. Theory},
   volume={5},
   date={2001},
   pages={191--223 (electronic)},
   issn={1088-4165},
   review={\MR{1850556 (2002g:17027)}},
   doi={10.1090/S1088-4165-01-00115-7},
}

\bib{CheDou}{article}{
   author={Cherednik, Ivan},
   title={Double affine Hecke algebras and Macdonald's conjectures},
   journal={Ann. of Math. (2)},
   volume={141},
   date={1995},
   number={1},
   pages={191--216},
   issn={0003-486X},
   review={\MR{1314036 (96m:33010)}},
   doi={10.2307/2118632},
}

\bib{CheMac}{article}{
   author={Cherednik, Ivan},
   title={Macdonald's evaluation conjectures and difference Fourier
   transform},
   journal={Invent. Math.},
   volume={122},
   date={1995},
   number={1},
   pages={119--145},
   issn={0020-9910},
   review={\MR{1354956 (98i:33027a)}},
   doi={10.1007/BF01231441},
}

\bib{FKL}{collection.article}{
   author={Feigin, B.},
   author={Kirillov, A. N.},
   author={Loktev, S.},
   title={Combinatorics and geometry of higher level Weyl modules},
   conference={Moscow Seminar on Mathematical Physics. II},
   series={Amer. Math. Soc. Transl. Ser. 2},
   volume={221},
   publisher={Amer. Math. Soc.},
   place={Providence, RI},
   date={2007},
   pages={33--47},
   review={\MR{2384790 (2009i:22026)}},
}

\bib{FMSa}{article}{
   author={Fourier, G.},
   author={Manning, N.},
   author={Savage, A.},
   title={Global Weyl modules for equivariant map algebras},
   journal={ Int. Math. Res. Not. IMRN},
   volume={2014},
   date={to appear},
   number={},
   pages={},
   doi={10.1093/imrn/rnt231},
}

%\bib{FMS}{article}{
%   author={Fourier, Ghislain},
%   author={Manning, Nathan},
%   author={Senesi, Prasad},
%   title={Global Weyl modules for the twisted loop algebra},
%   journal={Abh. Math. Semin. Univ. Hambg.},
%   volume={83},
%   date={2013},
%   number={1},
%   pages={53--82},
%   issn={0025-5858},
%   review={\MR{3055822}},
%   doi={10.1007/s12188-013-0074-2},
%}

\bib{FouKus}{article}{
   author={Fourier, Ghislain},
   author={Kus, Deniz},
   title={Demazure modules and Weyl modules: the twisted current case},
   journal={Trans. Amer. Math. Soc.},
   volume={365},
   date={2013},
   number={11},
   pages={6037--6064},
   issn={0002-9947},
   review={\MR{3091275}},
   doi={10.1090/S0002-9947-2013-05846-1},
}

\bib{FouLit}{article}{
   author={Fourier, G.},
   author={Littelmann, P.},
   title={Weyl modules, Demazure modules, KR-modules, crystals, fusion
   products and limit constructions},
   journal={Adv. Math.},
   volume={211},
   date={2007},
   number={2},
   pages={566--593},
   issn={0001-8708},
   review={\MR{2323538 (2008k:17005)}},
   doi={10.1016/j.aim.2006.09.002},
}

\bib{GusGen}{article}{
   author={Gustafson, Robert A.},
   title={A generalization of Selberg's beta integral},
   journal={Bull. Amer. Math. Soc. (N.S.)},
   volume={22},
   date={1990},
   number={1},
   pages={97--105},
   issn={0273-0979},
   review={\MR{1001607 (90j:33001)}},
   doi={10.1090/S0273-0979-1990-15852-5},
}

\bib{IonNon}{article}{
   author={Ion, B.},
   title={Nonsymmetric Macdonald polynomials and Demazure characters},
   journal={Duke Math. J.},
   volume={116},
   date={2003},
   number={2},
   pages={299--318},
   issn={0012-7094},
   review={\MR{1953294 (2004d:33019)}},
   doi={10.1215/S0012-7094-03-11624-5},
}

\bib{IonSta}{article}{
   author={Ion, B.},
   title={Standard bases for affine parabolic modules and nonsymmetric
   Macdonald polynomials},
   journal={J. Algebra},
   volume={319},
   date={2008},
   number={8},
   pages={3480--3517},
   issn={0021-8693},
   review={\MR{2408328 (2010c:20004)}},
   doi={10.1016/j.jalgebra.2007.07.029},
}

\bib{KacInf}{book}{
   author={Kac, V.},
   title={Infinite-dimensional Lie algebras},
   edition={3},
   publisher={Cambridge University Press},
   place={Cambridge},
   date={1990},
   pages={xxii+400},
   isbn={0-521-37215-1},
   isbn={0-521-46693-8},
   review={\MR{1104219 (92k:17038)}},
   doi={10.1017/CBO9780511626234},
}

\bib{KooAsk}{collection.article}{
   author={Koornwinder, Tom H.},
   title={Askey-Wilson polynomials for root systems of type $BC$},
   conference={Hypergeometric functions on domains of positivity, Jack polynomials, and applications (Tampa, FL, 1991)},
   series={Contemp. Math.},
   volume={138},
   publisher={Amer. Math. Soc.},
   place={Providence, RI},
   date={1992},
   pages={189--204},
   doi={10.1090/conm/138/1199128},
}

\bib{LenFro}{article}{
   author={Lenart, Cristian},
   title={From Macdonald polynomials to a charge statistic beyond type $A$},
   journal={J. Combin. Theory Ser. A},
   volume={119},
   date={2012},
   number={3},
   pages={683--712},
   issn={0097-3165},
   review={\MR{2871757}},
   doi={10.1016/j.jcta.2011.11.013},
}

\bib{LenLub}{article}{
   author={Lenart, Cristian},
   author={Lubovsky, Arthur},
   title={A generalization of the alcove model and its applications},
   conference={24th International Conference on Formal Power Series and
      Algebraic Combinatorics (FPSAC 2012)},
   series={Discrete Math. Theor. Comput. Sci. Proc., AR},
   publisher={Assoc. Discrete Math. Theor. Comput. Sci., Nancy},
   date={2012},
   pages={875--886},
   review={\MR{2958056}},
}

\bib{LNSSS}{article}{
   author={Lenart, Cristian},
   author={Naito, Satoshi},
   author={Sagaki, Daisuke},
   author={Schilling, Anne},
   author={Shimozono, Mark},
   title={A uniform model for Kirillov-Reshetikhin crystals II. Alcove model, path model, and $P=X$},
   journal={},
   volume={},
   date={2014},
   pages={},
   status={arXiv: 1402.220},
}

\bib{MacOrt}{article}{
   author={Macdonald, I. G.},
   title={Orthogonal polynomials associated with root systems},
   journal={S\'em. Lothar. Combin.},
   volume={45},
   date={2000/01},
   pages={Art.\ B45a, 40 pp. (electronic)},
   issn={1286-4889},
   review={\MR{1817334 (2002a:33021)}},
}

\bib{MacAff}{book}{
   author={Macdonald, I. G.},
   title={Affine Hecke algebras and orthogonal polynomials},
   series={Cambridge Tracts in Mathematics},
   volume={157},
   publisher={Cambridge University Press},
   place={Cambridge},
   date={2003},
   pages={x+175},
   isbn={0-521-82472-9},
   review={\MR{1976581 (2005b:33021)}},
   doi={10.1017/CBO9780511542824},
}

\bib{NaoWey}{article}{
   author={Naoi, Katsuyuki},
   title={Weyl modules, Demazure modules and finite crystals for non-simply
   laced type},
   journal={Adv. Math.},
   volume={229},
   date={2012},
   number={2},
   pages={875--934},
   issn={0001-8708},
   review={\MR{2855081}},
   doi={10.1016/j.aim.2011.10.005},
}

\bib{NSS}{article}{
   author={Neher, Erhard},
   author={Savage, Alistair},
   author={Senesi, Prasad},
   title={Irreducible finite-dimensional representations of equivariant map
   algebras},
   journal={Trans. Amer. Math. Soc.},
   volume={364},
   date={2012},
   number={5},
   pages={2619--2646},
   issn={0002-9947},
   review={\MR{2888222}},
   doi={10.1090/S0002-9947-2011-05420-6},
}

\bib{RamYip}{article}{
   author={Ram, Arun},
   author={Yip, Martha},
   title={A combinatorial formula for Macdonald polynomials},
   journal={Adv. Math.},
   volume={226},
   date={2011},
   number={1},
   pages={309--331},
   issn={0001-8708},
   review={\MR{2735761 (2012a:05335)}},
   doi={10.1016/j.aim.2010.06.022},
}

\bib{QuiPro}{article}{
   author={Quillen, D.},
   title={Projective modules over polynomial rings},
   journal={Invent. Math.},
   volume={36},
   date={1976},
   pages={167--171},
   issn={0020-9910},
   review={\MR{0427303 (55 \#337)}},
}

\bib{SahNon}{article}{
   author={Sahi, Siddhartha},
   title={Nonsymmetric Koornwinder polynomials and duality},
   journal={Ann. of Math. (2)},
   volume={150},
   date={1999},
   number={1},
   pages={267--282},
   issn={0003-486X},
   review={\MR{1715325 (2002b:33018)}},
   doi={10.2307/121102},
}

\bib{SanCon}{article}{
   author={Sanderson, Yasmine B.},
   title={On the connection between Macdonald polynomials and Demazure
   characters},
   journal={J. Algebraic Combin.},
   volume={11},
   date={2000},
   number={3},
   pages={269--275},
   issn={0925-9899},
   review={\MR{1771615 (2001h:17018)}},
   doi={10.1023/A:1008786420650},
}

\bib{StoKoo}{article}{
   author={Stokman, Jasper V.},
   title={Koornwinder polynomials and affine Hecke algebras},
   journal={Internat. Math. Res. Notices},
   date={2000},
   number={19},
   pages={1005--1042},
   issn={1073-7928},
   review={\MR{1792347 (2001m:20006)}},
   doi={10.1155/S1073792800000520},
}

\bib{SusPro}{article}{
   author={Suslin, A. A.},
   title={Projective modules over polynomial rings are free},
   language={Russian},
   journal={Dokl. Akad. Nauk SSSR},
   volume={229},
   date={1976},
   number={5},
   pages={1063--1066},
   issn={0002-3264},
   review={\MR{0469905 (57 \#9685)}},
}

\bib{TitRed}{article}{
   author={Tits, J.},
   title={Reductive groups over local fields},
   conference={Automorphic forms, representations and $L$-functions (Corvallis, OR, 1977), Part
      1},
      series={Proc. Sympos. Pure Math., XXXIII},
      publisher={Amer. Math. Soc.},
      place={Providence, RI},
   date={1979},
   pages={29--69},
   review={\MR{546588 (80h:20064)}},
}

\bib{DieSel}{article}{
   author={van Diejen, J. F.},
   title={Self-dual Koornwinder-Macdonald polynomials},
   journal={Invent. Math.},
   volume={126},
   date={1996},
   number={2},
   pages={319--339},
   issn={0020-9910},
   review={\MR{1411136 (98k:33025)}},
   doi={10.1007/s002220050102},
}

\end{biblist}
\end{bibdiv}
\end{document}